\def\tilde{\widetilde}
\def\hat{\widehat}
\renewcommand\bar{\overline}
\def\RR{{\mathbb R}}
\def\CC{{\mathbb C}}
\def\hat{\widehat}
\def\GL{\mathop{\rm GL}\nolimits}
\def\O{\mathop{\rm O}\nolimits}
\def\SO{\mathop{\rm SO}\nolimits}
\def\U{\mathrm U}
\def\Aut{\mathop{\rm Aut}\nolimits}
\def\Im{\mathop{\rm Im}\nolimits}
\def\Re{\mathop{\rm Re}\nolimits}
\def\ad{\mathop{\rm ad}\nolimits}
\def\Aff{\mathop{\rm Aff}\nolimits}
\def\tr{\mathop{\rm tr}\nolimits}
\def\qed{{\hfill $\Box$}}
\newtheorem{theorem}{THEOREM}[section]
\theoremstyle{definition}
\newtheorem{lemma}[theorem]{Lemma}
\theoremstyle{remark}
\newtheorem{remark}[theorem]{Remark}
\def\blfootnote{\xdef\@thefnmark{}\@footnotetext}
\begin{document}

\title[Manifolds with high-dimensional automorphism group]{Further steps towards classifying\\ homogeneous Kobayashi-hyperbolic manifolds\\ with high-dimensional automorphism group}\blfootnote{{\bf Mathematics Subject Classification:} 53C30, 53C35, 32Q45, 32M05, 32M10.}\blfootnote{{\bf Keywords:} Kobayashi-hyperbolic manifolds, homogeneous complex manifolds, the group of holomorphic automorphisms.}

\author[Isaev]{Alexander Isaev}

\address{Mathematical Sciences Institute\\
Australian National University\\
Canberra, Acton, ACT 2601, Australia}
\email{alexander.isaev@anu.edu.au}

\maketitle

\thispagestyle{empty}

\pagestyle{myheadings}

\begin{abstract}We determine all connected homogeneous Kobayashi-hyperbolic manifolds of dimension $n\ge 4$ whose group of holomorphic automorphisms has dimension either $n^2-4$, or $n^2-5$, or $n^2-6$. This paper continues a series of articles that achieve classifications for automorphism group dimension $n^2-3$ and greater.
\end{abstract}

\section{Introduction}\label{intro}
\setcounter{equation}{0}

Kobayashi-hyperbolic manifolds (hereafter called just hyperbolic) are of general interest in complex analysis and geometry as they possess many nice properties (see \cite{Ko1}, \cite{Ko3} for details). For instance, if $M$ is hyperbolic, the group $\Aut(M)$ of holomorphic automorphisms of $M$ is a Lie group in the compact-open topology. This is a consequence of the fact that the action of $\Aut(M)$ on $M$ is proper, which yields that $\Aut(M)$ is locally compact, and therefore a Lie transformation group (see, e.g., the survey paper \cite{Isa5} for details). 

For a hyperbolic manifold $M$, we denote by $n$ its complex dimension and set $d(M):=\dim\Aut(M)$. It is a classical fact that $d(M)$ does not exceed $n^2+2n$, with $d(M)=n^2+2n$ if and only if $M$ is biholomorphic to the unit ball $B^n$ in complex space $\CC^n$ (see \cite[Chapter V, Theorem 2.6]{Ko1}). In papers \cite{Isa1}, \cite{Isa2}, \cite{Isa4}, \cite{IK} we found all hyperbolic manifolds with $n^2-1\le d(M)< n^2+2n$ when $n\ge 2$. Our classification has proved to be useful in applications (see, e.g., \cite{V}), so it would be desirable to extend it to lower automorphism group dimensions. Notice, however, that a generic Reinhardt domain in $\CC^2$ has a 2-dimensional automorphism group, thus no reasonable explicit classification can exist already for $d(M)=n^2-2$, at least when $n=2$. One can try excluding the problematic case $d(M)=2$, $n=2$ from consideration and focus on manifolds of dimension $n\ge 3$, but 
disregarding difficult situations like this one randomly seems to be somewhat artificial. The more natural direction in which we hope some further progress can be made is to introduce the assumption of {\it homogeneity}, i.e., to suppose that the action of $\Aut(M)$ on $M$ is transitive. Homogeneous manifolds are widely considered in geometry, so this assumption, while being restrictive, is a standard one. Clearly, in the homogeneous case one must have $d(M)\ge 2n$.

In \cite[Theorem 1.1]{Isa6} and \cite[Theorem 1.1]{Isa7} we took two steps down from the lowest previously explored automorphism group dimension $n^2-1$ and found all homogeneous hyperbolic manifolds with $d(M)=n^2-2$ and $d(M)=n^2-3$, respectively (where one has to have $n\ge 3$). As one would expect, the lower the value of $d(M)$, the harder it is to produce an explicit classification but we feel that one should be able to progress even further. In this paper we look at dimensions $n^2-4$, $n^2-5$, $n^2-6$ and prove the following three theorems:

\begin{theorem}\label{main}
Let $M$ be a homogeneous hyperbolic manifold satisfying the condition $d(M)=n^2-4$. Then one of the following holds:

\noindent {\rm (i)} $n=4$ and $M$ is biholomorphic to
$B^1\times B^1\times B^1\times B^1$,

\noindent {\rm (ii)} $n=5$ and $M$ is biholomorphic either to $B^1\times B^1\times B^3$ or to
\begin{equation}
\begin{array}{l}
T_5:=\left\{(z_1,z_2,z_3,z_4,z_5)\in\CC^5:(\Im z_1)^2-(\Im z_2)^2-(\Im z_3)^2-\right.\\
\vspace{-0.3cm}\\
\hspace{5cm}\left.(\Im z_4)^2-(\Im z_5)^2>0,\,\,\Im z_1>0\right\},
\end{array}\label{domaint5}
\end{equation}
where the latter is the symmetric bounded domain of type {\rm (}$\hbox{{\rm IV}}_5${\rm )} {\rm (}written in tube form{\rm )},

\noindent {\rm (iii)} $n=6$ and $M$ is biholomorphic to $B^2\times B^4$.
\end{theorem}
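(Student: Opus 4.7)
The plan is to combine an analysis of the linear isotropy representation of $\Aut(M)$ with product decompositions reducing to the earlier papers of the series, and to handle the irreducible case via the homogeneous Siegel-domain picture.

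\textbf{Isotropy setup and bound on $n$.} Since $M$ is hyperbolic the $\Aut(M)$-action is proper, so the isotropy $K := \Aut(M)_p$ at a point $p \in M$ is compact. By the linearity of the isotropy action on a hyperbolic manifold, the representation $K \hookrightarrow \GL(T_pM) \cong \GL(n,\CC)$ is faithful, and after a choice of Hermitian structure on $T_pM$ the image lies in $\U(n)$. Homogeneity gives $\dim K = d(M) - 2n = n^2 - 2n - 4$. The first step is to enumerate, up to conjugation in $\U(n)$, the closed connected subgroups of that dimension; a comparison with the list of proper subgroup-dimensions of $\U(n)$ (encoded in the classification of maximal subgroups of $\U(n)$) forces $n \in \{4,5,6\}$, since for $n \ge 7$ the integer $n^2-2n-4$ cannot be realised as the dimension of any closed subgroup of $\U(n)$ admissible as the isotropy of a homogeneous hyperbolic manifold.

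\textbf{Product reduction.} For each candidate $K$, I examine the decomposition $\CC^n = V_1 \oplus \cdots \oplus V_s$ of $T_pM$ into irreducible $K$-submodules. When $K$ splits compatibly as $K_1 \times \cdots \times K_s$ with each $K_j$ acting faithfully on $V_j$ and trivially on the other summands, a standard argument using the $\Aut(M)$-translates of the invariant holomorphic foliations at $p$ forces $M$ to split biholomorphically as $M_1 \times \cdots \times M_s$, with $\dim_\CC M_j = \dim_\CC V_j$ and isotropy $K_j$. Each $M_j$ is then a lower-dimensional homogeneous hyperbolic manifold whose automorphism-group dimension lies in a range already completely classified in \cite{Isa1,Isa2,Isa4,IK,Isa6,Isa7}. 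A finite combinatorial check over admissible partitions, eliminating all those with the wrong dimension sums, leaves precisely the product examples $B^1\times B^1\times B^1\times B^1$ of item (i), $B^1\times B^1\times B^3$ of item (ii), and $B^2\times B^4$ of item (iii).

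\textbf{Irreducible case and main obstacle.} The remaining possibility is that $K$ acts irreducibly on $\CC^n$; I must then show that the only resulting homogeneous hyperbolic $M$ is the tube $T_5$ of item (ii) and that no irreducible example arises for $n=4$ or $n=6$. Here I would invoke the Vinberg--Gindikin--Piatetski-Shapiro theorem to realise $M$ as a homogeneous Siegel domain of the second kind, controlled by a normal $j$-algebra $\mathfrak{s}$ of real dimension $2n$. The constraints imposed jointly by the dimension equation $\dim \aut(M) = n^2-4$ and by the irreducible isotropy representation determined in the first step narrow the root data and the associated cone to precisely the Lorentzian cone underlying the type-$(\mathrm{IV}_5)$ symmetric domain in dimension five, while ruling out all possibilities in dimensions four and six. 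This $j$-algebra/representation-theoretic analysis is the technically most delicate part of the argument, since it requires checking that no irreducible compact subgroup of $\U(n)$ of the prescribed dimension can be matched with a normal $j$-algebra for $n \in \{4,6\}$, and that for $n=5$ the only match is $T_5$. Finally, the four manifolds in the conclusion are pairwise non-biholomorphic, as can be read off from non-isomorphism of their Lie algebras $\aut(M)$, which completes the classification.
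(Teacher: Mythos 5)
Your overall strategy is genuinely different from the paper's (which goes directly to the affinely homogeneous Siegel-domain realization of $M$ and bounds $d(S(\Omega,H))$ via the graded algebra ${\mathfrak g}_{-1}\oplus{\mathfrak g}_{-1/2}\oplus{\mathfrak g}_0\oplus{\mathfrak g}_{1/2}\oplus{\mathfrak g}_1$), but as written it has a fatal gap in the product-reduction step. The principle you invoke --- that a compatible splitting of the isotropy representation of a homogeneous hyperbolic manifold forces a biholomorphic splitting $M\cong M_1\times\cdots\times M_s$ --- is not a standard fact and is false. Homogeneous bounded domains are not governed by a de Rham--type decomposition theorem tied to the isotropy representation: the non-symmetric Pyatetskii-Shapiro domain ${\mathcal D}$ of (\ref{psmathcald}) is homogeneous hyperbolic with $n=4$, $d=10$, hence has a $2$-dimensional (toral, reducibly acting) isotropy group, yet it is not a product of lower-dimensional factors. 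Closer to the present theorem, the affinely homogeneous domains $D_6$ with ${\mathcal H}(w,w)=(\|w\|^2,\|w\|^2,0)$ in $\CC^5$ are homogeneous, hyperbolic, non-product, and have reducible isotropy; the paper must do real work (Lemmas \ref{g12d10} and \ref{g1d10}, showing ${\mathfrak g}_{1/2}=0$ and $\dim{\mathfrak g}_1=1$, whence $d(D_6)=15<21$) to exclude them. Your scheme either splits such domains incorrectly or never sees them, so the "finite combinatorial check over admissible partitions" does not actually exhaust the candidates.

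Two further steps are asserted rather than proved. First, the claim that $n^2-2n-4$ is not the dimension of any admissible closed subgroup of $\U(n)$ for $n\ge 7$ needs a genuine argument (a gap-theorem analysis of subgroup dimensions of $\U(n)$, or something equivalent); in the paper the bound on $(k,n)$ comes instead from the explicit inequality (\ref{estim4}) and Lemma \ref{n5k3}. Second, the irreducible case is only a programme: you would still have to show, with the normal $j$-algebra in hand, that the cone and Hermitian form are forced to be those of $T_5$ when $n=5$ and that nothing irreducible survives for $n=4,6$; this is essentially Cases (4) and (5) of the paper (where Lemma \ref{ourlemma} pins down the circular cone $C_5$) plus the exclusion of $T_4$ by $d(T_4)=15\ne 12$. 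If you want to salvage the isotropy-based approach, you would need to replace the product-reduction step by an analysis of the Siegel-domain data itself, at which point you are back to the paper's method.
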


\begin{theorem}\label{main1}
There does not exist a homogeneous hyperbolic manifold $M$ with $d(M)=n^2-5$.
\end{theorem}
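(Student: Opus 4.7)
The plan is to assume, for contradiction, that such an $M$ exists, and to mirror the method used to establish Theorem~\ref{main} and the earlier results \cite{Isa6,Isa7}. Set $G:=\Aut(M)^{\circ}$ and let $H$ denote the isotropy subgroup at a chosen basepoint $p\in M$. Since $\Aut(M)$ acts properly on $M$ (a consequence of hyperbolicity), $H$ is compact, and transitivity gives $\dim H=d(M)-n=n^2-n-5$. Hyperbolicity together with connectedness of $M$ ensures that the linear isotropy representation $\alpha\co H\to\GL(T_pM)\cong\GL(n,\CC)$ is injective, so after conjugation in $\GL(n,\CC)$ we may regard $H^{\circ}$ as a closed connected subgroup of $\U(n)$ of dimension $n^2-n-5$.

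The first step is to dispose of $n\ge 8$ by a purely dimensional argument: apart from $\U(n)$ (dimension $n^2$) and $\mathrm{SU}(n)$ (dimension $n^2-1$), every proper connected compact subgroup of $\U(n)$ has dimension at most $(n-1)^2+1=n^2-2n+2$. The strict inequality $n^2-2n+2<n^2-n-5$ holds precisely when $n\ge 8$, and neither $n^2$ nor $n^2-1$ equals $n^2-n-5$, so no admissible $H^{\circ}$ exists in this range.

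The substance of the proof lies in the four remaining cases $n\in\{4,5,6,7\}$. For each such $n$ the list of conjugacy classes of closed connected subgroups of $\U(n)$ of dimension $n^2-n-5$ is short and explicit; for instance, when $n=7$ the equality $n^2-n-5=37=(n-1)^2+1$ forces $H^{\circ}$ to be conjugate to $\U(6)\times\U(1)$, with analogous short lists for $n=6,5,4$. For every candidate I would examine the decomposition of $\CC^n\cong T_pM$ into $\alpha(H^{\circ})$-invariant subspaces. An irreducible action strongly restricts $M$, typically forcing it to be a bounded symmetric domain of a specific type whose automorphism dimension can be computed explicitly and compared with $n^2-5$. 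A reducible action, by the standard splitting argument for homogeneous hyperbolic manifolds with reducible isotropy, forces $M\cong M_1\times\cdots\times M_r$ with each $M_i$ a lower-dimensional homogeneous hyperbolic manifold whose automorphism dimension has already been tabulated in \cite{Isa1,Isa2,Isa4,IK,Isa6,Isa7} and Theorem~\ref{main}; it then remains to verify combinatorially that no admissible factorisation satisfies $\sum_i d(M_i)=n^2-5$.

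The main obstacle is the combinatorial verification in the reducible branch: for each of $n\in\{4,5,6,7\}$ one enumerates the partitions $n=n_1+\cdots+n_r$, pairs each part with its admissible automorphism dimensions drawn from the earlier classifications, and confirms that none of the resulting sums equals $n^2-5$. The bookkeeping is finite but requires care to cover every relevant product; the theorem follows from the fact that the target sum $n^2-5$ admits no such representation.
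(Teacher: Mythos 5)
Your proposal takes a genuinely different route from the paper, which never works with the isotropy representation: the paper invokes the theorem of Vinberg--Gindikin--Pyatetskii-Shapiro and Nakajima to realize $M$ as an affinely homogeneous Siegel domain of the second kind $S(\Omega,H)$ and then bounds and computes $d(M)$ through the graded Lie algebra ${\mathfrak g}_{-1}\oplus{\mathfrak g}_{-1/2}\oplus{\mathfrak g}_0\oplus{\mathfrak g}_{1/2}\oplus{\mathfrak g}_1$ of Kaup--Matsushima--Ochiai and Satake, reducing to finitely many pairs $(k,n)$ with explicit cones and Hermitian forms. Your opening reduction to $n\le 7$ is sound as far as it goes (the isotropy subgroup is compact, the linear isotropy representation of a hyperbolic manifold is faithful, and the dimension bound for proper connected subgroups of $\U(n)$ not containing $\mathrm{SU}(n)$ is correct), but the heart of your argument has a genuine gap.

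The gap is the assertion that a reducible isotropy action forces $M\cong M_1\times\cdots\times M_r$ ``by the standard splitting argument for homogeneous hyperbolic manifolds with reducible isotropy.'' No such splitting theorem exists: reducibility of the linear isotropy representation does not imply that a homogeneous hyperbolic manifold is a product, since isotropy does not coincide with holonomy outside the symmetric case. The Pyatetskii-Shapiro domain ${\mathcal D}\subset\CC^4$ of (\ref{psmathcald}), which appears in Theorem \ref{main2} of this very paper, is a counterexample: $d({\mathcal D})=10$, so its isotropy group has a $2$-torus as identity component, which acts completely reducibly on $\CC^4$, yet ${\mathcal D}$ is not biholomorphic to any product of lower-dimensional homogeneous hyperbolic manifolds (the only product compatible with four invariant lines would be the polydisc, whose automorphism group has dimension $12$). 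Indeed, every non-symmetric homogeneous bounded domain has reducible isotropy without being a product, so the reducible branch --- which is where essentially all the work for $n\in\{4,5,6,7\}$ lies --- collapses. Two further weaknesses: your lists of connected subgroups of $\U(n)$ of dimension $n^2-n-5$ are not as short as claimed (for $n=4$ one must handle all $7$-dimensional connected subgroups of $\U(4)$, of which there are many conjugacy classes), and the irreducible branch also needs justification, since irreducible isotropy alone does not force $M$ to be symmetric without an appeal to holonomy. Repairing the proof along your lines would require a structure theorem for homogeneous hyperbolic manifolds, which is precisely what the Siegel-domain realization used in the paper supplies.
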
 

\begin{theorem}\label{main2}
Let $M$ be a homogeneous hyperbolic manifold satisfying the condition $d(M)=n^2-6$. Then one of the following holds:

\noindent {\rm (i)} $n=4$ and $M$ is biholomorphic to the domain
\begin{equation}
\begin{array}{l}
{\mathcal D}:=\left\{(z,w)\in\times\CC^3\times\CC:(\Im z_1-|w|^2)^2-(\Im z_2-|w|^2)^2-\right.\\
\vspace{-0.3cm}\\
\hspace{6cm}\left.(\Im z_3)^2>0,\,\,\Im z_1-|w|^2>0\right\},\label{psmathcald}
\end{array}
\end{equation}

\noindent {\rm (ii)} $n=5$ and $M$ is biholomorphic to $B^1\times B^2 \times B^2$,

\noindent {\rm (iii)} $n=6$ and $M$ is biholomorphic to either $B^3\times B^3$, or $B^1\times B^1\times B^4$,

\noindent {\rm (iv)} $n=7$ and $M$ is biholomorphic to $B^2\times B^5$.
\end{theorem}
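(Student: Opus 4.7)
The plan is to follow the template of the earlier papers \cite{Isa6,Isa7} in this series and of the proofs of Theorems~\ref{main} and~\ref{main1} just given. Writing $M=G/H$ with $G:=\Aut(M)^\circ$, one has $\dim G=n^2-6$, the isotropy $H$ is compact since the $G$-action is proper, and by a theorem of H.~Cartan the isotropy representation is faithful. Hence the connected component $L:=H^\circ$ embeds as a closed connected subgroup of $U(n)$ of real dimension $n^2-2n-6$, which already forces $n\ge 4$.

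For each relevant $n$ I would catalogue the possible conjugacy classes of such $L\subset U(n)$ using the classification of compact subalgebras of $\mathfrak{u}(n)$ of prescribed dimension, and for each candidate invoke the Vinberg-Gindikin-Pyatetskii-Shapiro realization to present $M$ as an affinely homogeneous Siegel domain of the second kind. When the isotropy representation on $T_pM\cong\CC^n$ is reducible, $M$ decomposes as a direct product of lower-dimensional homogeneous hyperbolic factors, and a Diophantine analysis of $\sum_i(n_i^2+2n_i)=n^2-6$ subject to $\sum_i n_i=n$ then yields the product domains listed in (ii)--(iv); in particular this forces $n\le 7$. The only candidate which is not a product of balls arises in $n=4$ and, after identifying its linear isotropy as a $2$-dimensional compact subgroup of $U(4)$, I expect to recognize it as a Siegel domain of the second kind over the bounded symmetric domain of type $({\rm IV}_3)$ with one-dimensional holomorphic fibre, i.e.\ the domain $\mathcal{D}$ of (\ref{psmathcald}). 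Existence is then verified by exhibiting an explicit transitive Lie group of holomorphic automorphisms of $\mathcal{D}$ of dimension $10=4^2-6$ and checking, via the standard rigidity of Siegel domains, that $\dim\Aut(\mathcal{D})$ cannot exceed this value.

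The main obstacle I anticipate is case (i): with $\dim L=2$, the isotropy is very flexible inside $U(4)$ and many a priori abstract embeddings (various tori and embeddings involving $\mathrm{SU}(2)$ blocks) have to be enumerated and either ruled out by incompatibility with the existence of a $G$-invariant complex structure or shown to yield precisely $\mathcal{D}$ up to biholomorphism. By contrast, the cases $n\ge 5$ should be essentially combinatorial: the dimension of $L$ grows rapidly relative to $n$, the isotropy representation is forced to split off an invariant subspace, and the product decomposition of $M$ then reduces the problem to the Diophantine enumeration above.
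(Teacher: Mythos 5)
Your overall strategy diverges from the paper's, which works throughout with the affinely homogeneous Siegel-domain realization of $M$ and the graded Lie algebra ${\mathfrak g}_{-1}\oplus{\mathfrak g}_{-1/2}\oplus{\mathfrak g}_0\oplus{\mathfrak g}_{1/2}\oplus{\mathfrak g}_1$ of Kaup--Matsushima--Ochiai and Satake, reducing everything to estimates on $\dim{\mathfrak g}(\Omega)$ and on the space of skew-Hermitian matrices, plus explicit computations of ${\mathfrak g}_{1/2}$ and ${\mathfrak g}_1$ for a handful of concrete domains. The divergence introduces a fatal gap: the load-bearing step in your argument is the claim that reducibility of the isotropy representation on $T_pM\cong\CC^n$ forces $M$ to split as a product of lower-dimensional homogeneous hyperbolic factors. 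This is false, and it is refuted by the very domain ${\mathcal D}$ in part (i) of the theorem. There $d({\mathcal D})=10$, so the identity component of the isotropy is a $2$-dimensional compact connected subgroup of $\U(4)$, i.e.\ a torus, whose representation on $\CC^4$ splits into characters and is as reducible as possible; yet ${\mathcal D}$ is the Pyatetskii-Shapiro non-symmetric domain and is not biholomorphic to any product (a proper product of balls of total dimension $4$ has automorphism group of dimension $12$, $14$, $16$ or $18$, and $B^1\times T_3$ has dimension $13$). Irreducibility of the isotropy representation detects product structure only in the symmetric case, where the isotropy is a maximal compact subgroup; for general homogeneous hyperbolic manifolds the isotropy is far too small for this dichotomy to hold. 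Consequently your Diophantine enumeration --- which moreover silently assumes every factor is a ball, whereas factors such as $T_3$, $T_5$ or lower-dimensional non-symmetric domains must a priori be admitted --- does not establish parts (ii)--(iv), and your bound $n\le 7$ is unproved.

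Even setting this aside, the cases you defer as ``essentially combinatorial'' are exactly where the real work lies: for $n=5$, $k=3$ one must show that no Siegel domain over the $3$-dimensional circular cone with a rank-two Hermitian form attains $d=19$, which the paper does by proving $\dim{\mathfrak g}_{1/2}\le 2$ for every normal form of the Hermitian form with $s=1$ (Lemma \ref{oneofthemissmall}); these domains all have reducible isotropy and are not products, so they are invisible to your reduction. To salvage the plan you would have to replace the product-decomposition step by the Vinberg--Gindikin--Pyatetskii-Shapiro/Nakajima realization from the outset and then control $d(S(\Omega,H))$ cone by cone and Hermitian form by Hermitian form, which is precisely what the paper's proof does.
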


Combining these theorems with the classifications found earlier, namely the classical result for $d(M)=n^2+2n$ mentioned above, \cite[Theorem 2.2]{Isa3}, \cite[Theorem 1.1]{Isa6} and \cite[Theorem 1.1]{Isa7}, we obtain:

\begin{theorem}\label{combined}
Let $M$ be a homogeneous hyperbolic manifold satisfying\linebreak $n^2-6\le d(M)\le n^2+2n$. Then $M$ is biholomorphic either to the domain ${\mathcal D}$ introduced in {\rm (\ref{psmathcald})} {\rm (}here $n=4$, $d(M)=10=n^2-6${\rm )}, or to a suitable product of unit balls, or to a suitable symmetric bounded domain of type {\rm (IV)}, or to a suitable product of a unit ball and a symmetric bounded domain of type {\rm (IV)}. Specifically, the following products of unit balls are possible:
\begin{itemize}

\item[{\rm (i)}] $B^n$ {\rm (}here $d(M)=n^2+2n${\rm )},
\vspace{0.1cm}

\item[{\rm (ii)}] $B^1\times B^{n-1}$ {\rm (}here $d(M)=n^2+2${\rm )},
\vspace{0.1cm}

\item[{\rm (iii)}] $B^1\times B^1\times B^1$ {\rm (}here $n=3$, $d(M)=9=n^2${\rm )},
\vspace{0.1cm}

\item[{\rm (iv)}] $B^2\times B^2$ {\rm (}here $n=4$, $d(M)=16=n^2${\rm )},
\vspace{0.1cm}

\item[{\rm (v)}] $B^1\times B^1\times B^2$ {\rm (}here $n=4$, $d(M)=14=n^2-2${\rm )},
\vspace{0.1cm}

\item[{\rm (vi)}] $B^2\times B^3$ {\rm (}here $n=5$, $d(M)=23=n^2-2${\rm )}, 

\item[{\rm (vii)}] $B^1\times B^1\times B^1\times B^1$ {\rm (}here $n=4$, $d(M)=12=n^2-4${\rm )},

\item[{\rm (viii)}] $B^1\times B^1\times B^3$ {\rm (}here $n=5$, $d(M)=21=n^2-4${\rm )},

\item[{\rm (ix)}] $B^2\times B^4$ {\rm (}here $n=6$, $d(M)=32=n^2-4${\rm )}, 

\item[{\rm (x)}] $B^1\times B^2\times B^2$ {\rm (}here $n=5$, $d(M)=19=n^2-6${\rm )}, 

\item[{\rm (xi)}] $B^3\times B^3$ {\rm (}here $n=6$, $d(M)=30=n^2-6${\rm )},

\item[{\rm (xii)}] $B^1\times B^1\times B^4$ {\rm (}here $n=6$, $d(M)=30=n^2-6${\rm )},

\item[{\rm (xiii)}] $B^2\times B^5$ {\rm (}here $n=7$, $d(M)=43=n^2-6${\rm )},
   \end{itemize}

\noindent the following symmetric bounded domains of type {\rm (IV)} {\rm (}written in tube form{\rm )} are possible:

\begin{itemize}

\item[{\rm (xiv)}] the domain of type {\rm (}$\hbox{{\rm IV}}_3${\rm )}
\begin{equation}
T_3:=\left\{(z_1,z_2,z_3)\in\CC^3:(\Im z_1)^2-(\Im z_2)^2-(\Im z_3)^2>0,\,\,\Im z_1>0\right\}\label{domaint3}
\end{equation}
{\rm (}here $n=3$, $d(M)=10=n^2+1${\rm )},
\vspace{0.3cm}

\item[{\rm (xv)}] the domain of type {\rm (}$\hbox{{\rm IV}}_4${\rm )}
\begin{equation}
\begin{array}{l}
T_4:=\left\{(z_1,z_2,z_3,z_4)\in\CC^4:(\Im z_1)^2-(\Im z_2)^2-\right.\\
\vspace{-0.3cm}\\
\hspace{5.5cm}\left.(\Im z_3)^2-(\Im z_4)^2>0,\,\,\Im z_1>0\right\}
\end{array}\label{domaint4}
\end{equation} 
{\rm (}here $n=4$, $d(M)=15=n^2-1${\rm )},
\vspace{0.3cm}

\item[{\rm (xvi)}] the domain of type {\rm (}$\hbox{{\rm IV}}_5${\rm )}, i.e., the domain $T_5$ defined in {\rm (\ref{domaint5})} {\rm (}here $n=5$, $d(M)=21=n^2-4${\rm )},
\end{itemize}

\noindent and the following product of a unit ball and a symmetric bounded domain of type {\rm (IV)} is possible:

\begin{itemize}

\item[{\rm (xvii)}] $B^1\times T_3$ {\rm (}here $n=4$, $d(M)=13=n^2-3${\rm )}.

\end{itemize}

\end{theorem}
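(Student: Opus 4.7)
The theorem is a synthesis statement, so the plan is to assemble the classifications already in the literature together with the three theorems just proved and verify that their union is exactly the list (i)--(xvii). I would partition the interval $n^2-6\le d(M)\le n^2+2n$ according to the source handling each value of $d(M)$, then merge the resulting lists, and finally verify the dimension counts.

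For the partition, the classical ball characterization \cite[Chapter V, Theorem 2.6]{Ko1} handles $d(M)=n^2+2n$, yielding item (i). The (non-homogeneous) classifications \cite{Isa1}, \cite{Isa2}, \cite{Isa4}, \cite{IK} cover the range $n^2-1\le d(M)<n^2+2n$; restricting to the homogeneous entries of those lists produces items (ii), (iii), (iv), (xiv), (xv). Next, \cite[Theorem 1.1]{Isa6} treats $d(M)=n^2-2$, contributing (v) and (vi); \cite[Theorem 1.1]{Isa7} treats $d(M)=n^2-3$, contributing (xvii); and Theorems \ref{main}, \ref{main1}, \ref{main2} of the present paper treat $d(M)=n^2-4$, $n^2-5$, $n^2-6$ and contribute, respectively, items (vii)--(ix), no manifold at all, and items (x)--(xiii) together with the domain $\mathcal{D}$.

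It then remains to verify the automorphism-group dimensions listed against each entry. For products of hyperbolic manifolds one has $d(M_1\times M_2)=d(M_1)+d(M_2)$ (the factors of such a product are permuted and biholomorphically acted upon by any automorphism, by the uniqueness of the de Rham-type decomposition), which combined with the classical formula $d(B^k)=k^2+2k$ yields the numbers recorded in (i)--(xiii) and (xvii). For the type (IV) tube domains $T_k$ in (xiv)--(xvi), the dimension equals $\binom{k+2}{2}=\dim\SO(2,k)$, giving $10$, $15$, $21$ for $k=3,4,5$ respectively. The value $d(\mathcal{D})=10$ for $n=4$ is supplied directly by Theorem \ref{main2}.

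I do not expect any genuine obstacle, since all substantive classification work is done in the cited sources and in the three new theorems; the only point requiring attention is to confirm that the $d(M)$-ranges covered by the various references together exhaust $[n^2-6,n^2+2n]$ without gaps, which is immediate from the partition above, and that no manifold is inadvertently listed twice, which follows from the fact that each entry in (i)--(xvii) has its own distinguishing pair $(n,d(M))$ within its class of model domains.
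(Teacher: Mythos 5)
Your proposal is correct and follows essentially the same route as the paper, which likewise obtains Theorem \ref{combined} by simply concatenating the classical ball characterization, the earlier classifications for $n^2-3\le d(M)<n^2+2n$ (the paper cites \cite[Theorem 2.2]{Isa3} together with \cite{Isa6}, \cite{Isa7}, while you cite the original sources \cite{Isa1}, \cite{Isa2}, \cite{Isa4}, \cite{IK} for the top range), and Theorems \ref{main}--\ref{main2} for $d(M)=n^2-4,\,n^2-5,\,n^2-6$. Your additional verification of the dimension counts via $d(M_1\times M_2)=d(M_1)+d(M_2)$, $d(B^k)=k^2+2k$ and $d(T_k)=\dim\SO(2,k)$ is consistent with the values the paper records.
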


The proofs of Theorems \ref{main}, \ref{main1}, \ref{main2} are contained Sections \ref{proof}, \ref{proof1}, \ref{proof2} respectively, and, just as the proofs of the main theorems of \cite{Isa6}, \cite{Isa7}, rely on reduction to the case of the so-called \emph{Siegel domains of the second kind} introduced by I.~Pyatetskii-Shapiro at the end of the 1950s (see Section \ref{prelim} for details). Indeed, the seminal paper \cite{VGP-S} shows that every homogeneous bounded domain in $\CC^n$ is biholomorphic to an affinely homogeneous Siegel domain of the second kind. Moreover, in \cite{N2} this result was extended to arbitrary homogeneous hyperbolic manifolds, which settled a well-known question asked by S.~Kobayashi (see \cite[p.~127]{Ko1}). Theorems \ref{main}--\ref{main2} are then derived, by a somewhat technical argument, from the description of the Lie algebra of the automorphism group of a Siegel domain of the second kind given in \cite{KMO} and \cite[Chapter V, \S 1--2]{S}.

As shown in Sections \ref{proof}, \ref{proof1}, \ref{proof2}, the proofs of Theorems \ref{main}--\ref{main2} reduce to analyzing certain domains in $\CC^n$ with $n\le 7$. All homogeneous Siegel domains of the second kind of dimension up to 7 were classified in \cite{KT}, so one might hope that our results could be deduced from that classification. However, since \cite{KT} does not contain full details as to how the classification was produced, we chose to give an independent exposition. Also, to the best of our knowledge, the automorphism group dimensions for most of the domains found in \cite{KT} have not been determined. In fact, an essential part of our proofs is to either compute or estimate some of these dimensions.

It is clear from Theorem \ref{combined} that for $d(M)\ge n^2-5$ only symmetric domains occur. In contrast, the list for $d(M)=n^2-6$ contains a non-symmetric entry. Indeed, the domain ${\mathcal D}$ defined in (\ref{psmathcald}) is linearly equivalent to the famous example of a bounded non-symmetric homogeneous domain in $\CC^4$ given by I.~Pyatetskii-Shapiro. As the automorphism group dimension drops even further, the resulting classifications will become more interesting as more non-symmetric domains will appear on the list. It is not clear, however, for how many more steps the classification process will remain tractable. 

{\bf Acknowledgements.} Most of the work on this paper was done during the author's visit to the Steklov Mathematical Institute in Moscow, which we thank for its hospitality. We are also grateful to M.~Jarnicki and P.~Pflug for offering their help with the editorial procedures required to process this paper for publication.

\section{Siegel Domains of the Second Kind}\label{prelim}
\setcounter{equation}{0}

Here we define Siegel domains of the second kind and collect their properties as required for our proofs of Theorems \ref{main}--\ref{main2} in the next three sections. What follows is similar to the exposition given in \cite[Section 2]{Isa6}.

To start with, an open subset $\Omega\subset\RR^k$ is called an \emph{open convex cone} if it is closed with respect to taking linear combinations of its elements with positive coefficients. Such a cone $\Omega$ is called \emph{{\rm (}linearly{\rm )} homogeneous} if the group
$$
G(\Omega):=\{A\in\GL_k(\RR):A\Omega=\Omega\}
$$
of linear automorphisms of $\Omega$ acts transitively on it. Clearly, $G(\Omega)$ is a closed subgroup of $\GL_k(\RR)$, and we denote by ${\mathfrak g}(\Omega)\subset{\mathfrak{gl}}_k(\RR)$ its Lie algebra.

We will be interested in open convex cones not containing entire lines. For such cones the dimension of ${\mathfrak g}(\Omega)$ admits a useful estimate.

\begin{lemma}\label{ourlemma}\it Let $\Omega\subset\RR^k$ be an open convex cone not containing a line. Then
\begin{equation}
\dim {\mathfrak g}(\Omega)\le\displaystyle\frac{k^2}{2}-\frac{k}{2}+1.\label{conineq}
\end{equation}
Furthermore, for $k\ge 2$ the equality in {\rm (\ref{conineq})} is attained if and only if $\Omega$ linearly equivalent to the circular cone
$$
C_k:=\{x\in\RR^k:x_1^2-x_2^2-\cdots-x_k^2>0,\,\,x_1>0\}.
$$
Moreover, if for $k\ge 3$ we set
$$
K:=\frac{(k-2)(k-3)}{2}+k+1,
$$
then the inequality $\dim {\mathfrak g}(\Omega)\ge K$ implies that $\Omega$ is linearly equivalent to $C_k$.
\end{lemma}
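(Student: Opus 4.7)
The plan is orbit--stabilizer bookkeeping at a well-chosen point of $\Omega$, with the stabilizer forced into an orthogonal subgroup by the absence of lines in $\Omega$. Fix $x_0\in\Omega$. Since $\Omega$ is an open convex cone not containing a line, $G(\Omega)$ acts properly on $\Omega$ (standard, via the Koszul--Vinberg characteristic function), so the stabilizer $H:=G(\Omega)_{x_0}$ is compact. Averaging produces an $H$-invariant inner product on $\RR^{k}$, with respect to which $H$ fixes $x_0$ and preserves $V:=x_0^{\perp}$; hence $H\subset\O(V)\cong\O(k-1)$. The orbit--stabilizer identity then gives
$$
\dim\mathfrak{g}(\Omega)=\dim H+\dim\bigl(G(\Omega)\cdot x_0\bigr)\le\frac{(k-1)(k-2)}{2}+k=\frac{k^{2}}{2}-\frac{k}{2}+1,
$$
establishing inequality (2.1).

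For the equality case with $k\ge 2$, equality forces $\dim H=(k-1)(k-2)/2$ (so the identity component $H^{0}$ equals $\SO(V)$) and $\dim(G(\Omega)\cdot x_0)=k$ (so $\Omega$ is homogeneous). The case $k=2$ is trivial since every open convex wedge in $\RR^{2}$ is linearly equivalent to $C_2$, so I would assume $k\ge 3$. Then the slice $C:=\{v\in V:x_0+v\in\Omega\}$ is a convex open $H$-invariant neighborhood of $0$ in $V$; since $\SO(V)$ acts transitively on the unit sphere of $V$, $C$ must be an open ball $B_r(0)$ (bounded because $\Omega$ contains no line). Averaging any linear functional positive on $\Omega$ over the compact group $H$ yields an $H$-invariant positive linear form, which by the irreducibility of $\SO(V)$ on $V$ is a positive multiple of the coordinate along $\RR x_0$; hence $\Omega\subset\{tx_0+v:t>0\}$, and the cone property together with $C=B_r(0)$ forces $\Omega=\{tx_0+v:t>0,\,|v|<rt\}$, a circular cone linearly equivalent to $C_k$.

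For the ``moreover'' statement ($k\ge 3$), I would pick $x_0$ in a $G(\Omega)$-orbit of maximal dimension, so that
$$
\dim H\ge\dim\mathfrak{g}(\Omega)-k\ge K-k=\frac{(k-2)(k-3)}{2}+1>\dim\O(k-2).
$$
The key input is the classical fact that any closed subgroup of $\O(n)$ of dimension strictly greater than $\dim\O(n-1)=(n-1)(n-2)/2$ must act transitively on $S^{n-1}$. Applied with $n=k-1$, this yields transitivity of $H$ on the unit sphere of $V$, after which the slice argument from the equality case identifies $\Omega$ with $C_k$ up to linear equivalence.

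The hard part is precisely this subgroup-dimension input. A direct orbit--stabilizer bound inside $\O(n)$ acting on $S^{n-1}$ only gives $\dim H\le\dim\O(n-1)+(n-2)$ for non-transitive $H$ (from $H_v\subset\O(n-1)$ and the fact that any orbit has dimension at most $n-2$), which leaves a gap of $n-2$; closing it---equivalently, ruling out proper closed subgroups of $\SO(n)$ of dimension in the window $(\dim\O(n-1),\dim\O(n))$ that act non-transitively on $S^{n-1}$---is where the Borel--Montgomery--Samelson classification of compact Lie groups acting transitively on spheres enters. Everything else in the argument reduces to elementary convex geometry and orbit--stabilizer bookkeeping.
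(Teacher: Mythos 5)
Your proposal is correct and follows essentially the same route as the paper: orbit--stabilizer at a point of $\Omega$, compactness of the stabilizer (the paper gets this from invariance of the bounded set $\Omega\cap(\mathbf{x}-\Omega)$ rather than properness), embedding into $\O_{k-1}(\RR)$, and then forcing the stabilizer to act transitively on spheres in $\mathbf{x}^{\perp}$ so that $\Omega$ is swept out by round spheres over the half-line $\RR_{+}\mathbf{x}$. You also correctly isolate the one nontrivial input for the ``moreover'' part --- that a closed subgroup of $\O(k-1)$ of dimension exceeding $\dim\O(k-2)$ is transitive on $S^{k-2}$ --- which the paper sources from Kobayashi's lemma for $k\ne 5$ and from Ishihara's list (where $\U(2)\subset\SO_4$ appears) for $k=5$, exactly the $n=4$ exception your Borel--Montgomery--Samelson reference must absorb.
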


\begin{proof} Fix a point ${\mathbf x}\in \Omega$ and consider its isotropy subgroup $G_{{\mathbf x}}(\Omega)\subset G(\Omega)$. This subgroup is compact since it leaves invariant the bounded open set $\Omega\cap({\mathbf x}-\Omega)$. Therefore, changing variables in $\RR^k$ if necessary, we can assume that ${\mathbf x}$ lies in the $x_1$-axis and $G_{{\mathbf x}}(\Omega)$ lies in the orthogonal group $\O_k(\RR)$. The group $\O_k(\RR)$ acts transitively on the unit sphere in $\RR^k$, and the isotropy subgroup of ${\mathbf x}$ under the $\O_k(\RR)$-action is $\O_{k-1}(\RR)$ embedded in $\O_k(\RR)$ with respect to the last $k-1$ variables. Since $G_{{\mathbf x}}(\Omega)\subset \O_{k-1}(\RR)$, we have
$$
\dim G_{{\mathbf x}}(\Omega)\le\dim \O_{k-1}(\RR)=\displaystyle\frac{k^2}{2}-\frac{3k}{2}+1,
$$
which implies inequality (\ref{conineq}).

Next, note that for $k=2$ every open convex cone not containing a line is linearly equivalent to $C_2$, so we assume that $k\ge 3$ and
$$
\dim {\mathfrak g}(\Omega)=\displaystyle\frac{k^2}{2}-\frac{k}{2}+1.
$$
Then 
$$
\dim G_{{\mathbf x}}(\Omega)=\dim \O_{k-1}(\RR)=\displaystyle\frac{k^2}{2}-\frac{3k}{2}+1,
$$
hence $G_{{\mathbf x}}$ contains $\SO_{k-1}(\RR)$. Notice that the $\SO_{k-1}(\RR)$-orbit of every point of $\Omega$ not lying in the $x_1$-axis is a $(k-2)$-sphere contained in a level set $\{x_1=a\}$ for some $a>0$. As $\Omega$ is a union of such spheres, it follows that $\Omega$ is linearly equivalent to $C_k$. 

Assume finally that $k\ge 3$ and $\dim {\mathfrak g}(\Omega)\ge K$. Since the orbit of ${\mathbf x}$ is at most $k$-dimensional, we have $\dim G_{{\mathbf x}}\ge K-k$. If $k\ne 5$, by \cite[Lemma on p.~48]{Ko2} the group $G_{{\mathbf x}}$ contains $\SO_{k-1}(\RR)$, thus, as above, we see that $\Omega$ is linearly equivalent to $C_k$. If $k=5$ then $K-k=4$. In this case, by \cite{Ish}, the group $G_{{\mathbf x}}$ contains either $\SO_4(\RR)$ (which has dimension 6) or a subgroup of $\SO_4(\RR)$ conjugate to $\U(2)$ (which has dimension 4). The orbit of every point of $\Omega$ not lying in the $x_1$-axis, under the action of either subgroup, is a $3$-sphere contained in a level set $\{x_1=a\}$ for some $a>0$, so the proof follows as above.\end{proof}

Next, let
$$
H:\CC^m\times\CC^m\to\CC^k
$$
be a Hermitian form on $\CC^m$ with values in $\CC^k$, where we assume that $H(w,w')$ is linear in $w'$ and anti-linear in $w$. For an open convex cone $\Omega\subset\RR^k$, the form $H$ is called \emph{$\Omega$-Hermitian} if $H(w,w)\in\overline{\Omega}\setminus\{0\}$ for all non-zero $w\in\CC^m$. Observe that if $\Omega$ contains no lines and $H$ is $\Omega$-Hermitian, then there exists a positive-definite linear combination of the components of $H$.

Now, a Siegel domain of the second kind in $\CC^n$ is an unbounded domain of the form
$$
S(\Omega,H):=\left\{(z,w)\in\CC^k\times\CC^{n-k}:\Im z-H(w,w)\in \Omega\right\}
$$
for some $1\le k\le n$, some open convex cone $\Omega\subset\RR^k$ not containing a line, and some $\Omega$-Hermitian form $H$ on $\CC^{n-k}$. For $k=n$ we have $H=0$, so in this case $S(\Omega,H)$ is the tube domain       
$$
\left\{z\in\CC^n:\Im z\in \Omega\right\}.
$$
Such tube domains are often called \emph{Siegel domains of the first kind}. At the other extreme, when $k=1$, the domain $S(\Omega,H)$ is linearly equivalent to
$$
\left\{(z,w)\in\CC\times\CC^{n-1}:\Im z-||w||^2>0\right\},
$$ 
which is an unbounded realization of the unit ball $B^n$ (see \cite[p.~31]{R}). More generally, if $\Omega=\{x\in\RR^k:x_1>0,\dots,x_k>0\}$ and, in addition, $S(\Omega,H)$ is homogeneous, then $S(\Omega,H)$ is linearly equivalent to a product of $k$ unbounded realizations of unit balls as above, hence biholomorphic to a product of unit balls. This result follows from \cite[Theorems A, B, C]{KT} (see \cite{Ka} and and \cite[Theorem 11]{KMO} for details), as well as from \cite{N1}. Note that every Siegel domain of the second kind is linearly equivalent to a domain contained in a product of unbounded realizations of unit balls (see \cite[pp.~23--24]{P-S}), hence is biholomorphic to a bounded domain, and therefore is hyperbolic.

Next, the holomorphic affine automorphisms of Siegel domains of the second kind are described as follows (see \cite[pp.~25-26]{P-S}):

\begin{theorem}\label{Siegelaffautom}
Any holomorphic affine automorphism of $S(\Omega,H)$ has the form
$$
\begin{array}{lll}
z&\mapsto & Az+a+2iH(b,Bw)+iH(b,b),\\
\vspace{-0.3cm}\\
w&\mapsto & Bw+b,
\end{array}
$$
with $a\in\RR^k$, $b\in\CC^{n-k}$, $A\in G(\Omega)$, $B\in\GL_{n-k}(\CC)$, where
\begin{equation}
AH(w,w')=H(Bw,Bw')\label{assoc}
\end{equation}
for all $w,w'\in\CC^{n-k}$.
\end{theorem}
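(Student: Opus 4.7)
The plan is to write an arbitrary affine holomorphic automorphism of $S(\Omega,H)$ in the block form
$$\Phi(z,w) = \bigl(\alpha z + \beta w + \gamma,\; \delta z + \varepsilon w + \eta\bigr),$$
with $\alpha,\beta,\delta,\varepsilon$ complex linear and $\gamma\in\CC^k$, $\eta\in\CC^{n-k}$, and then to pin down each piece by exploiting (a) invariance of $S(\Omega,H)$ under the real translations $(z,w)\mapsto(z+t,w)$ for $t\in\RR^k$, and (b) the fact that membership in $S(\Omega,H)$ depends on $(z,w)$ only through $u:=\Im z-H(w,w)\in\Omega$.

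Writing $(z',w'):=\Phi(z,w)$ and applying $\Phi$ to $(z+t,w)$ for $t\in\RR^k$, the requirement $\Phi(z+t,w)\in S(\Omega,H)$ becomes
$$\bigl(\Im z'-H(w',w')\bigr)\;+\;\Im(\alpha t)\;-\;2\Re H(w',\delta t)\;-\;H(\delta t,\delta t)\;\in\;\Omega$$
for every real $t$. Apply the linear functional $\ell$ on $\RR^k$ whose composition with $H$ is positive-definite, supplied by the $\Omega$-Hermitian hypothesis (note also that $\ell>0$ on $\Omega$). Scaling $t\mapsto\lambda t$ with $\lambda\to\infty$, the resulting inequality $\ell(\cdots)>0$ is violated by the quadratic term $-\lambda^2\ell(H(\delta t,\delta t))$ unless $H(\delta t,\delta t)=0$, which forces $\delta t=0$ for every real $t$ and hence $\delta=0$ by $\CC$-linearity. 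With $\delta=0$ the surviving linear-in-$t$ condition becomes $c+\Im(\alpha t)\in\Omega$ for every $c\in\Omega$ and $t\in\RR^k$, and pointedness of $\Omega$ forces $\Im(\alpha t)=0$, so $\alpha=:A$ is a real matrix.

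Substituting $u=\Im z-H(w,w)$, the condition $\Phi(z,w)\in S(\Omega,H)$ becomes $Au+f(w)\in\Omega$ with
$$f(w):=AH(w,w)+\Im(\beta w+\gamma)-H(\varepsilon w+\eta,\varepsilon w+\eta)$$
depending only on $w$. The equivalence $u\in\Omega\Leftrightarrow Au+f(w)\in\Omega$ gives $A\Omega=\Omega-f(w)$ for each $w$; comparing two values of $w$ shows that $f(w_2)-f(w_1)$ stabilises $\Omega$ under translation, and pointedness forces it to vanish, so $f\equiv c_0$. The identity $A\Omega=\Omega-c_0$, in which the left-hand side is itself a cone at the origin, then forces $c_0=0$; hence $A\in G(\Omega)$ and $f\equiv 0$.

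The identity $f\equiv 0$ splits into homogeneous pieces in $(w,\overline w)$: the pure quadratic piece polarises to $AH(w,w')=H(\varepsilon w,\varepsilon w')$, which is (\ref{assoc}) with $B:=\varepsilon$; the constant piece is $\Im\gamma=H(\eta,\eta)$, yielding $\gamma=a+iH(\eta,\eta)$ with $a\in\RR^k$; and the linear piece $\Im(\beta w)=2\Re H(\varepsilon w,\eta)$ is uniquely solved by $\beta w=2iH(\eta,\varepsilon w)$, since any $\CC$-linear map $\CC^{n-k}\to\CC^k$ whose imaginary part vanishes is itself zero. Setting $b:=\eta$ and $B:=\varepsilon$ reproduces the formula in the theorem; invertibility of $A$ and $B$ is inherited from that of the linear part of the biholomorphism $\Phi$. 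The main technical point is the elimination of $\delta$ in the second paragraph, which is where both the $\Omega$-Hermitian hypothesis on $H$ and the pointedness of $\Omega$ are genuinely used, and the argument must be arranged so that the quadratic-in-$t$ obstruction really pushes the image out of $\Omega$.
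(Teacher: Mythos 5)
Your argument is correct. Note that the paper does not prove this statement at all: it is quoted as a classical fact with a reference to Pyatetskii-Shapiro (pp.~25--26 of \cite{P-S}), so there is no in-paper proof to compare against. Your derivation is the standard one and is sound: killing the block $\delta$ by the quadratic-in-$t$ obstruction, forcing $\alpha$ real and $A\Omega=\Omega$ by pointedness (no lines in $\Omega$), and then reading off \eqref{assoc}, the constant term $iH(b,b)$ and the cross term $2iH(b,Bw)$ from the bidegree decomposition of $f\equiv 0$. Two small points worth making explicit if you write this up: the functional $\ell$ must be chosen in the interior of the dual cone $\Omega^{*}$ (nonempty precisely because $\Omega$ contains no line) so that it is simultaneously positive on $\overline{\Omega}\setminus\{0\}$ and makes $\ell\circ H$ positive definite --- this is exactly the observation recorded in Section 2 of the paper; and the equivalence $u\in\Omega\Leftrightarrow Au+f(w)\in\Omega$ uses that the inverse of an affine automorphism of $S(\Omega,H)$ is again affine, which holds because the linear part must be invertible for the image to be all of the open set $S(\Omega,H)$.
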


A domain $S(\Omega,H)$ is called \emph{affinely homogeneous} if the group $\Aff(S(\Omega,H))$ of its holomorphic affine automorphisms acts on $S(\Omega,H)$ transitively. Denote by $G(\Omega,H)$ the subgroup of $G(\Omega)$ that consists of all transformations $A\in G(\Omega)$ as in Theorem \ref{Siegelaffautom}, namely, of all elements $A\in G(\Omega)$ for which there exists $B\in\GL_{n-k}(\CC)$ such that (\ref{assoc}) holds. By \cite[Lemma 1.1]{D}, the subgroup $G(\Omega,H)$ is closed in $G(\Omega)$. It is easy to deduce from Theorem \ref{Siegelaffautom} that if $S(\Omega,H)$ is affinely homogeneous, the $G(\Omega,H)$-action is transitive on $\Omega$, so the cone $\Omega$ is homogeneous (see, e.g., \cite[proof of Theorem 8]{KMO}). Conversely, if $G(\Omega,H)$ acts on $\Omega$ transitively, the domain $S(\Omega,H)$ is affinely homogeneous. Clearly, the transitivity of the action of the group $G(\Omega,H)$ on $\Omega$ is equivalent to that of the action of its identity component $G(\Omega,H)^{\circ}$.

As shown in \cite{VGP-S}, \cite{N2}, every homogeneous hyperbolic manifold is biholomorphic to an affinely homogeneous Siegel domain of the second kind. Such a realization is unique up to affine transformations; in general, if two Siegel domains of the second kind are biholomorphic to each other, they are also equivalent by means of a linear transformation of special form (see \cite[Theorem 11]{KMO}). The result of \cite{VGP-S}, \cite{N2} is the basis of our proofs of Theorems \ref{main}--\ref{main2} in the next three sections.

In addition, our proofs rely on a description of the Lie algebra of the group $\Aut(S(\Omega,H))$ of an arbitrary Siegel domain of the second kind $S(\Omega,H)$. This algebra is isomorphic to the (real) Lie algebra of complete holomorphic vector fields on $S(\Omega,H)$, which we denote by ${\mathfrak g}(S(\Omega,H))$ or, when there is no fear of confusion, simply by ${\mathfrak g}$. The latter algebra has been extensively studied. In particular, we have (see \cite[Theorems 4 and 5]{KMO}):

\begin{theorem}\label{kmoalgebradescr}
The algebra ${\mathfrak g}={\mathfrak g}(S(\Omega,H))$ admits a grading
$$
{\mathfrak g}={\mathfrak g}_{-1}\oplus{\mathfrak g}_{-1/2}\oplus{\mathfrak g}_0\oplus{\mathfrak g}_{1/2}\oplus{\mathfrak g}_1,
$$
with ${\mathfrak g}_{\nu}$ being the eigenspace with eigenvalue $\nu$ of $\ad\partial$, where
$\displaystyle\partial:=z\cdot\frac{\partial}{\partial z}+\frac{1}{2}w\cdot\frac{\partial}{\partial w}$. 
Here
$$
\begin{array}{ll}
{\mathfrak g}_{-1}=\displaystyle\left\{a\cdot\frac{\partial}{\partial z}:a\in\RR^k\right\},&\dim {\mathfrak g}_{-1}=k,\\
\vspace{-0.1cm}\\
{\mathfrak g}_{-1/2}=\displaystyle\left\{2i H(b,w)\cdot\frac{\partial}{\partial z}+b\cdot\frac{\partial}{\partial w}:b\in\CC^{n-k}\right\},&\dim {\mathfrak g}_{-1/2}=2(n-k),
\end{array}
$$
and ${\mathfrak g}_0$ consists of all vector fields of the form
\begin{equation}
(Az)\cdot\frac{\partial}{\partial z}+(Bw)\cdot\frac{\partial}{\partial w},\label{g0}
\end{equation}
with $A\in{\mathfrak g}(\Omega)$, $B\in{\mathfrak{gl}}_{n-k}(\CC)$ and
\begin{equation}
AH(w,w')=H(Bw,w')+H(w,Bw')\label{assoc1}
\end{equation}
for all $w,w'\in\CC^{n-k}$. Furthermore, one has
\begin{equation} 
\dim {\mathfrak g}_{1/2}\le 2(n-k),\qquad \dim {\mathfrak g}_1\le k.\label{estimm}
\end{equation}
\end{theorem}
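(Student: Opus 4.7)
The plan is to first show that the Euler-type vector field $\partial$ itself lies in $\mathfrak{g}$, and then to use $\ad \partial$ as an inner derivation to obtain the decomposition. For the first step, the flow of $\partial$ is $\phi_t(z, w) = (e^t z, e^{t/2} w)$, and $\phi_t(S(\Omega, H)) = S(\Omega, H)$ because
$$
\Im(e^t z) - H(e^{t/2} w, e^{t/2} w) = e^t \bigl(\Im z - H(w, w)\bigr) \in \Omega,
$$
using that $\Omega$ is a cone and $H$ is $\Omega$-Hermitian. Thus $\partial \in \mathfrak{g}$, so $\ad \partial \co \mathfrak{g} \to \mathfrak{g}$ is a derivation.

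The crux is to show that $\ad \partial$ is semisimple with spectrum contained in $\{-1, -1/2, 0, 1/2, 1\}$. The route I would take is to establish the polynomial character of every complete holomorphic vector field on $S(\Omega, H)$: writing $X = f(z, w) \cdot \partial/\partial z + g(z, w) \cdot \partial/\partial w$, the components $f$ and $g$ must be polynomials in $(z, w)$ of bidegree bounded independently of $X$. This polynomial bound is derived from the hyperbolicity of $S(\Omega, H)$ together with the presence of the one-parameter dilation group $\{\phi_t\}$: pulling $X$ back along $\phi_t$ as $t\to-\infty$ and applying Kobayashi-metric growth estimates for holomorphic vector fields on hyperbolic manifolds forces $X$ to be polynomial of controlled bidegree. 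On the resulting finite-dimensional space of admissible polynomial vector fields, $\ad \partial$ is manifestly diagonalizable, and an elementary weight count for monomials in $(z, w)$ (where $z_i$ has weight $1$ and $w_j$ has weight $1/2$) identifies its possible eigenvalues as exactly $\{-1, -1/2, 0, 1/2, 1\}$.

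With the grading established, each negative-weight and zero-weight piece is identified by solving $[\partial, X] = \nu X$ and retaining only those solutions generating one-parameter subgroups of $\Aut(S(\Omega, H))$. For $\nu = -1$ the equation forces $X = a \cdot \partial/\partial z$ with $a \in \CC^k$ constant; the completeness requirement that the flow $z \mapsto z + t a$ preserve $S(\Omega, H)$ forces $a \in \RR^k$, giving $\dim \mathfrak{g}_{-1} = k$. For $\nu = -1/2$, the eigenvalue condition yields fields whose $\partial/\partial z$ component is linear in $w$ and whose $\partial/\partial w$ component is constant; matching against the translation part of $\Aff(S(\Omega, H))$ from Theorem \ref{Siegelaffautom} (with $A = I$, $a = 0$, $b \in \CC^{n-k}$) isolates exactly the fields displayed in the theorem. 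For $\nu = 0$, admissible solutions reduce to the linear form (\ref{g0}), and differentiating the compatibility relation (\ref{assoc}) at the identity of a one-parameter subgroup of $\Aff(S(\Omega, H))$ produces (\ref{assoc1}), while $A \in \mathfrak{g}(\Omega)$ follows from $\exp(tA) \subset G(\Omega)$.

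Finally, the inequalities $\dim \mathfrak{g}_{1/2} \le 2(n - k)$ and $\dim \mathfrak{g}_1 \le k$ in (\ref{estimm}) admit a conceptual proof via an anti-holomorphic involution of $\mathfrak{g}$ arising from the isotropy of a distinguished interior point in a bounded realization of $S(\Omega, H)$ (obtained via a Cayley-type transform): this involution exchanges the $\pm \nu$ eigenspaces of $\ad \partial$, so that $\dim \mathfrak{g}_\nu \le \dim \mathfrak{g}_{-\nu}$ for $\nu > 0$, and the already-established formulas for $\mathfrak{g}_{-1/2}$ and $\mathfrak{g}_{-1}$ then give the stated bounds at once. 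The main obstacle throughout is the polynomial bound for complete holomorphic vector fields invoked in the second paragraph; once that rigidity is granted, everything else reduces to solving linear equations compatible with Theorem \ref{Siegelaffautom} and the cone structure of $\Omega$.
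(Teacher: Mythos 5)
The paper offers no proof of Theorem \ref{kmoalgebradescr}: it is quoted verbatim from \cite[Theorems 4 and 5]{KMO} (see also \cite[Chapter V]{S}), so there is no internal argument to compare yours against; I can only judge the sketch on its own terms, and it has two genuine defects beyond the admitted vagueness of the polynomiality step. First, the claim that, once polynomiality is granted, ``an elementary weight count for monomials'' identifies the spectrum of $\ad\partial$ as exactly $\{-1,-1/2,0,1/2,1\}$ is false: a monomial $z^\alpha w^\beta\,\partial/\partial z_i$ has weight $|\alpha|+|\beta|/2-1$ and $z^\alpha w^\beta\,\partial/\partial w_j$ has weight $|\alpha|+|\beta|/2-1/2$, so the count only gives $\nu\ge -1$ and $\nu\in\frac{1}{2}\ZZ$; monomials of high degree have arbitrarily large positive weight. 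The confinement $\nu\le 1$ is precisely the hard content of the KMO theorem and requires a separate argument using completeness (equivalently, the absence of lines in $\Omega$ and the bracket relations with $\mathfrak{g}_{-1}$ and $\mathfrak{g}_{-1/2}$). For the same reason your identification of $\mathfrak{g}_0$ with the linear fields (\ref{g0}) is incomplete: weight $0$ also admits a term $c(w,w)\cdot\partial/\partial z$ with $c$ a symmetric $\CC$-bilinear form, and one must prove $c=0$. Polynomiality itself is better obtained not from Kobayashi-metric growth but from finite-dimensionality of $\mathfrak{g}$ (H.~Cartan, via the bounded realization): the minimal polynomial $p$ of $\ad\partial$ annihilates each field, and applying $p(\ad\partial)$ termwise to the power-series expansion kills all but finitely many weights.

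The second defect is fatal as stated: an involution of $\mathfrak{g}$ carrying $\mathfrak{g}_\nu$ onto $\mathfrak{g}_{-\nu}$ would force $\dim\mathfrak{g}_\nu=\dim\mathfrak{g}_{-\nu}$, not merely the inequalities (\ref{estimm}), and equality fails in general. The paper itself supplies counterexamples: Lemma \ref{g12d10} gives $\mathfrak{g}_{1/2}=0$ for $\tilde D_6$ while $\dim\mathfrak{g}_{-1/2}=2N>0$, and for the Pyatetskii-Shapiro domain ${\mathcal D}$ one finds $\dim\mathfrak{g}_1=1<3=\dim\mathfrak{g}_{-1}$. A grade-reversing involution exists for symmetric domains, but a general homogeneous Siegel domain is not symmetric and $\mathfrak{g}$ need not be reductive, so no such involution is available. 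The bounds in (\ref{estimm}) are instead obtained by showing that $\ad E:\mathfrak{g}_{1/2}\to\mathfrak{g}_{-1/2}$ and $(\ad E)^2:\mathfrak{g}_1\to\mathfrak{g}_{-1}$ are injective for a suitable $E\in\mathfrak{g}_{-1}$, or read off from the explicit parametrizations in Theorems \ref{descrg1/2} and \ref{descrg1}. The parts of your sketch that do work are the membership $\partial\in\mathfrak{g}$ and the descriptions of $\mathfrak{g}_{-1}$ and $\mathfrak{g}_{-1/2}$ via Theorem \ref{Siegelaffautom}.
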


It is clear that the matrices $A$ that appear in (\ref{g0}) form the Lie algebra of $G(\Omega,H)$ (compare conditions (\ref{assoc}) and (\ref{assoc1})) and that ${\mathfrak g}_{-1}\oplus{\mathfrak g}_{-1/2}\oplus{\mathfrak g}_0$ is isomorphic to the Lie algebra of the group $\Aff(S(\Omega,H))$.

Following \cite{S}, for a pair of matrices $A,B$ satisfying (\ref{assoc1}) we say that $B$ is \emph{associated to $A$} (with respect to $H$). Let ${\mathcal L}$ be the (real) subspace of ${\mathfrak{gl}}_{n-k}(\CC)$ of all matrices associated to the zero matrix in ${\mathfrak g}(\Omega)$, i.e., matrices skew-Hermitian with respect to each component of $H$. Set $s:=\dim {\mathcal L}$. Then we have
\begin{equation}
\dim {\mathfrak g}_0=s+\dim G(\Omega,H)\le s+\dim {\mathfrak g}(\Omega).\label{estim1}
\end{equation}
By Theorem \ref{kmoalgebradescr} and the inequality in (\ref{estim1}) one obtains
\begin{equation}
d(S(\Omega,H))\le 2n-k+s+\dim {\mathfrak g}(\Omega)+\dim {\mathfrak g}_{1/2}+ \dim {\mathfrak g}_1,\label{estim 8}
\end{equation}
which, combined with (\ref{estimm}), leads to
\begin{equation}
d(S(\Omega,H))\le 4n-2k+s+\dim {\mathfrak g}(\Omega).\label{estim2}
\end{equation}
Further, since there exists a positive-definite linear combination, say ${\mathbf H}$, of the components of the Hermitian form $H$, the subspace ${\mathcal L}$ lies in the Lie algebra of matrices skew-Hermitian with respect to ${\mathbf H}$, thus
\begin{equation}
s\le (n-k)^2.\label{ests}
\end{equation}
By (\ref{ests}), inequality (\ref{estim2}) yields
\begin{equation}
d(S(\Omega,H))\le  k^2-2(n+1)k+n^2+4n+\dim {\mathfrak g}(\Omega).\label{estim3}
\end{equation}
Combining (\ref{estim3}) with (\ref{conineq}), we deduce the following useful upper bound:
\begin{equation}
d(S(\Omega,H))\le\displaystyle\frac{3k^2}{2}-\left(2n+\frac{5}{2}\right)k+n^2+4n+1.\label{estim4}
\end{equation}

Next, by \cite[Chapter V, Proposition 2.1]{S} the component ${\mathfrak g}_{1/2}$ of the Lie algebra ${\mathfrak g}={\mathfrak g}(S(\Omega,H))$ is described as follows:

\begin{theorem}\label{descrg1/2}
The subspace ${\mathfrak g}_{1/2}$ consists of all vector fields of the form
$$
2iH(\Phi(\bar z),w)\cdot\frac{\partial}{\partial z}+(\Phi(z)+c(w,w))\cdot\frac{\partial}{\partial w},
$$
where $\Phi:\CC^k\to\CC^{n-k}$ is a $\CC$-linear map such that for every ${\mathbf w}\in\CC^{n-k}$ one has
\begin{equation}
\Phi_{{\mathbf w}}:=\left[x\mapsto\Im H({\mathbf w},\Phi(x)),\,\, x\in\RR^k\right]\in{\mathfrak g}(\Omega),\label{Phiw0}
\end{equation}
and $c:\CC^{n-k}\times\CC^{n-k}\to\CC^{n-k}$ is a symmetric $\CC$-bilinear form on $\CC^{n-k}$ with values in $\CC^{n-k}$ satisfying the condition
\begin{equation}
H(w,c(w',w'))=2iH(\Phi(H(w',w)),w')\label{cond1}
\end{equation}
for all $w,w'\in\CC^{n-k}$. 
\end{theorem}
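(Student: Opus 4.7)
The plan is to determine $\mathfrak{g}_{1/2}$ by combining the grading of Theorem \ref{kmoalgebradescr} with the constraints arising from Lie brackets against the negative-weight components $\mathfrak{g}_{-1}$ and $\mathfrak{g}_{-1/2}$, together with the polynomiality of complete holomorphic vector fields on $S(\Omega,H)$.

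First I would write an arbitrary $X \in \mathfrak{g}_{1/2}$ as $X = f\,\partial/\partial z + g\,\partial/\partial w$ with $f,g$ holomorphic. The eigenvalue condition $[\partial,X] = (1/2)X$ translates into $\partial f = (3/2)f$ and $\partial g = g$. Since $S(\Omega,H)$ is biholomorphic to a bounded hyperbolic domain, complete holomorphic vector fields on it are polynomial, so $f$ and $g$ are polynomials. Reading off the admissible monomials $z^{\alpha} w^{\beta}$ under the weight conditions $|\alpha| + (1/2)|\beta| = 3/2$ and $=1$ respectively, one obtains
\begin{equation*}
f(z,w) = F(z,w) + C(w,w,w), \qquad g(z,w) = \Phi(z) + c(w,w),
\end{equation*}
with $F$ a $\CC$-bilinear form on $\CC^k\times\CC^{n-k}$ valued in $\CC^k$, $C$ a symmetric $\CC$-trilinear form on $\CC^{n-k}$ valued in $\CC^k$, $\Phi\colon\CC^k\to\CC^{n-k}$ a $\CC$-linear map, and $c$ a symmetric $\CC$-bilinear form on $\CC^{n-k}$ valued in $\CC^{n-k}$.

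Next I would compute $[X,Y]$ for $Y = a\cdot\partial/\partial z \in \mathfrak{g}_{-1}$ with $a\in\RR^k$; the bracket lies in $\mathfrak{g}_{-1/2}$ and must match the explicit description in Theorem \ref{kmoalgebradescr}. Comparing the $\partial/\partial w$-parts identifies the vector parameter as $-\Phi(a)$, and comparing the $\partial/\partial z$-parts forces $F(a,w) = 2iH(\Phi(a),w)$ for every $a\in\RR^k$. Because $z\mapsto 2iH(\Phi(\bar z),w)$ is $\CC$-linear in $z$ (the anti-linearity of $H$ in its first slot cancels that of $\Phi(\bar z)$) and agrees with $F$ on $\RR^k$, uniqueness of the $\CC$-linear extension gives $F(z,w) = 2iH(\Phi(\bar z),w)$. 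To pin down the remaining data I would then take a general $Y = 2iH(b,w)\,\partial/\partial z + b\,\partial/\partial w \in \mathfrak{g}_{-1/2}$ and impose $[X,Y] \in \mathfrak{g}_0$. The description of $\mathfrak{g}_0$ from Theorem \ref{kmoalgebradescr} forces the $\partial/\partial z$-coefficient of $[X,Y]$ to be linear in $z$ alone, and the $\partial/\partial w$-coefficient to be linear in $w$ alone, with the matrices $(A,B)$ related by (\ref{assoc1}). The linear-in-$z$ matching, after using the Hermitian symmetry of $H$ and passing to the real part, produces $\Phi_b \in \mathfrak{g}(\Omega)$ for every $b\in\CC^{n-k}$, which is precisely (\ref{Phiw0}). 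The quadratic-in-$w$ matching simultaneously kills $C$ and yields, after polarization in $w$, the identity $H(w,c(w',w')) = 2iH(\Phi(H(w',w)),w')$, which is (\ref{cond1}).

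The converse direction, namely that every vector field of the stated form with $\Phi$ and $c$ satisfying (\ref{Phiw0}) and (\ref{cond1}) lies in $\mathfrak{g}_{1/2}$, reduces to a direct verification of tangency to $\partial S(\Omega,H)$ and generation of a one-parameter subgroup of automorphisms. I expect the main obstacle to lie in the quadratic-in-$w$ step: extracting \emph{both} $C=0$ and the exact form (\ref{cond1}) requires careful tracking of the sesquilinear structure of $H$ against the $\CC$-bilinearity of $F$ and $c$, and a polarization argument in the symmetric variable $w$ of $c$. Once this bookkeeping is completed, the theorem follows.
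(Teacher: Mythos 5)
First, a point of order: the paper does not prove Theorem \ref{descrg1/2} at all --- it is quoted from \cite[Chapter V, Proposition 2.1]{S} (the result originates with \cite{KMO}), so there is no internal proof to measure your argument against. Judged on its own, your sketch reconstructs the standard external proof, and the necessity half is workable in outline: the bracket with $a\cdot\partial/\partial z$ does force the bilinear piece of the $\partial/\partial z$-coefficient to be $2iH(\Phi(\bar z),w)$, and the $\partial/\partial z$-coefficient of the bracket with a field of ${\mathfrak g}_{-1/2}$ does produce (\ref{Phiw0}) from its $z$-linear part and both $C=0$ and (\ref{cond1}) from its $w$-quadratic part.

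That said, three steps are genuinely gapped. (1) Your justification of polynomiality is false as stated: being biholomorphic to a bounded domain does not make complete holomorphic vector fields polynomial in the Siegel coordinates; that ${\mathfrak g}(S(\Omega,H))$ consists of polynomial fields of weighted degree at most $1$ is itself a substantive theorem of \cite{KMO}, and Theorem \ref{kmoalgebradescr} as reproduced in the paper hands you the grading and the low-weight components but not the polynomial normal form of ${\mathfrak g}_{1/2}$, so this input must be cited or proved, not inferred from boundedness. (2) The mechanism that separates $C=0$ from (\ref{cond1}) is not polarization in $w$. The quadratic matching gives the single identity $3C(w,w,b)=2iH(b,c(w,w))+4H(\Phi(H(w,b)),w)$; its left-hand side is $\CC$-linear in $b$ while its right-hand side is anti-linear in $b$ (replace $b$ by $ib$), so each side vanishes separately, and the vanishing of the right-hand side, after renaming $b\mapsto w$, $w\mapsto w'$, is exactly (\ref{cond1}). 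You correctly identified this as the delicate point but did not supply the argument that resolves it. (3) The converse inclusion is not a routine tangency check: $S(\Omega,H)$ is unbounded, so tangency of a polynomial field to the boundary does not by itself yield completeness of its flow, and some genuine argument (explicit integration of the flow, transfer to a bounded realization, or locating the field inside a finite-dimensional algebra already known to integrate to a group action) is required. As it stands the proposal is a correct roadmap for necessity with one invalid justification and one missing key computation, and the sufficiency direction is unaddressed.
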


Further, by \cite[Chapter V, Proposition 2.2]{S}, the component ${\mathfrak g}_1$ of ${\mathfrak g}={\mathfrak g}(S(\Omega,H))$ admits the following description:

\begin{theorem}\label{descrg1}
The subspace ${\mathfrak g}_1$ consists of all vector fields of the form
$$
a(z,z)\cdot\frac{\partial}{\partial z}+b(z,w)\cdot\frac{\partial}{\partial w},
$$
where $a:\RR^k\times\RR^k\to\RR^k$ is a symmetric $\RR$-bilinear form on $\RR^k$ with values in $\RR^k$ {\rm (}which we extend to a symmetric $\CC$-bilinear form on $\CC^k$ with values in $\CC^k${\rm )} such that for every ${\mathbf x}\in\RR^k$ one has
\begin{equation}
A_{{\mathbf x}}:=\left[x\mapsto a({\mathbf x},x),\,\,x\in\RR^k\right]\in{\mathfrak g}(\Omega),\label{idents1}
\end{equation}
and $b:\CC^k\times\CC^{n-k}\to\CC^{n-k}$ is a $\CC$-bilinear map such that, if for ${\mathbf x}\in\RR^k$ one sets
$$
B_{{\mathbf x}}:=\left[w\mapsto\frac{1}{2}b({\mathbf x},w),\,\,w\in\CC^{n-k}\right],\label{idents2}
$$
the following conditions are satisfied:
\begin{itemize}

\item[{\rm (i)}] $B_{{\mathbf x}}$ is associated to $A_{{\mathbf x}}$ and $\Im\tr B_{{\mathbf x}}=0$ for all ${\mathbf x}\in\RR^k$,
\vspace{0.1cm}

\item[{\rm (ii)}] for every pair ${\mathbf w},{\mathbf w}'\in\CC^{n-k}$ one has
$$
B_{{\mathbf w},{\mathbf w}'}:=\left[x\mapsto\Im H({\mathbf w'},b(x,{\mathbf w})),\,\,x\in\RR^k\right]\in{\mathfrak g}(\Omega),
$$

\item[{\rm (iii)}] $H(w,b(H(w',w''),w''))=H(b(H(w'',w),w'),w'')$ for all $w,w',w''\in\CC^{n-k}$.

\end{itemize}
\end{theorem}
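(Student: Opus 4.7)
The plan is to follow the standard derivation used for the other graded components of $\mathfrak g(S(\Omega,H))$: first use completeness together with the grading $[\partial,X]=X$ (where $\partial=z\cdot\partial/\partial z+\tfrac12 w\cdot\partial/\partial w$) to pin down the polynomial shape of $X\in\mathfrak g_1$, and then extract the algebraic constraints on $(a,b)$ by bracketing $X$ successively with $\mathfrak g_{-1}$ and $\mathfrak g_{-1/2}$ and demanding closure in the graded algebra.

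For the first step I would use that $S(\Omega,H)$ is biholomorphic to a bounded domain (see \cite[pp.~23--24]{P-S}) to invoke Kaup-type polynomiality of complete holomorphic vector fields, together with a weight count: a monomial $z^\alpha w^\beta\cdot\partial/\partial z$ has $\ad\partial$-weight $|\alpha|+|\beta|/2-1$, and similarly for $\partial/\partial w$. The eigenvalue $1$ picks out bidegree $(2,0)$ for the $z$-component and $(1,1)$ for the $w$-component. Reality of the flow on the totally real slice $\{\Im z\in\Omega,\,w=0\}$ of $S(\Omega,H)$ then forces the symmetric form $a$ to take real values on $\RR^k$, giving the advertised representation $X=a(z,z)\cdot\partial/\partial z+b(z,w)\cdot\partial/\partial w$.

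Next I would bracket with $\mathfrak g_{-1}$ and $\mathfrak g_{-1/2}$. For $\mathbf x\in\RR^k$ the direct computation
$$
\bigl[\mathbf x\cdot\partial/\partial z,\,X\bigr]=2a(\mathbf x,z)\cdot\partial/\partial z+b(\mathbf x,w)\cdot\partial/\partial w
$$
must lie in $\mathfrak g_0$, which by Theorem \ref{kmoalgebradescr} forces $A_{\mathbf x}\in\mathfrak g(\Omega)$ and the associativity relation (\ref{assoc1}) between $A_{\mathbf x}$ and $B_{\mathbf x}$ — that is, condition (i) minus the trace requirement. Bracketing instead with $S_{\mathbf b}:=2iH(\mathbf b,w)\cdot\partial/\partial z+\mathbf b\cdot\partial/\partial w\in\mathfrak g_{-1/2}$, the output must belong to $\mathfrak g_{1/2}$; comparing with Theorem \ref{descrg1/2} identifies the linear-in-$z$ part of the $w$-component as $\Phi(z)=b(z,\mathbf b)$, and the constraint (\ref{Phiw0}) becomes, after using the Hermitian symmetry of $H$, exactly condition (ii). The trace vanishing $\Im\tr B_{\mathbf x}=0$ then emerges by matching the $\partial/\partial z$-part of $[S_{\mathbf b},X]$ against the explicit form of Theorem \ref{descrg1/2}: the $w$-free term there must drop out, and this forces $\Im\tr B_{\mathbf x}=0$.

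Condition (iii) would then be extracted either by one more bracket iteration — the double bracket of $X$ with $S_{\mathbf b},S_{\mathbf b'}$ lands in $\mathfrak g_0$ via the Jacobi identity, and the resulting relation (\ref{assoc1}), after polarization in three $\CC^{n-k}$-arguments, reduces to the stated quartic identity $H(w,b(H(w',w''),w''))=H(b(H(w'',w),w'),w'')$ — or by writing the candidate flow of $X$ explicitly and demanding that $\Im z-H(w,w)\in\Omega$ be preserved to all orders in the flow time. The main obstacle, and what distinguishes the description of $\mathfrak g_1$ from those of the lower graded pieces, is the sufficiency direction: given $(a,b)$ satisfying (i)--(iii) one must produce a genuine one-parameter subgroup of $\Aut(S(\Omega,H))$, not merely an infinitesimal automorphism. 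This requires exhibiting the flow in closed form as a birational map in $(z,w)$ and verifying directly that it preserves $S(\Omega,H)$, which is the delicate calculation underlying \cite[Chapter V, Proposition 2.2]{S}.
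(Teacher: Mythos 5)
First, a point of orientation: the paper does not prove this statement at all --- it is quoted directly from \cite[Chapter V, Proposition 2.2]{S}, just as Theorem \ref{descrg1/2} is quoted from Proposition 2.1 there --- so your sketch has to be judged against the literature proof rather than against anything in the text. Your overall strategy for the necessity direction (pin down the polynomial shape via the $\ad\partial$-grading, then extract (i)--(iii) by bracketing into $\mathfrak{g}_0$ and $\mathfrak{g}_{1/2}$) is the standard one, and the brackets you invoke do produce most of the conditions: $[{\mathbf x}\cdot\partial/\partial z,X]\in\mathfrak{g}_0$ gives $A_{\mathbf x}\in\mathfrak{g}(\Omega)$ and the associativity half of (i), a second bracket with $\mathfrak{g}_{-1}$ gives the reality of $a$ on $\RR^k$, and $[S_{\mathbf b},X]\in\mathfrak{g}_{1/2}$, matched against Theorem \ref{descrg1/2} via $\Phi(z)=b(z,{\mathbf b})$ and $c(w,w)=2ib(H({\mathbf b},w),w)$, yields (ii) from (\ref{Phiw0}) and (iii) directly from (\ref{cond1}) --- no double-bracket iteration is needed for (iii).

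There are, however, concrete gaps. (1) The weight count is wrong as stated: the $\ad\partial$-eigenvalue $1$ also admits monomials of bidegree $(1,2)$ and $(0,4)$ in the $\partial/\partial z$-component and $(0,3)$ in the $\partial/\partial w$-component. These must be eliminated by further arguments (a $zw^2$-term would, after one bracket with $\mathfrak{g}_{-1}$, produce a $w^2$-term incompatible with the form (\ref{g0}) of $\mathfrak{g}_0$; the $w^4$- and $w^3$-terms are excluded by bracketing with $\mathfrak{g}_{-1/2}$ against Theorem \ref{descrg1/2}); you cannot simply read off bidegrees $(2,0)$ and $(1,1)$. (2) Your proposed source of $\Im\tr B_{\mathbf x}=0$ does not work. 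Computing $[S_{\mathbf b},X]$ gives $z$-component $4ia(H({\mathbf b},w),z)-2iH({\mathbf b},b(z,w))$ and $w$-component $b(z,{\mathbf b})+2ib(H({\mathbf b},w),w)$: there is no ``$w$-free term that must drop out,'' so no trace condition emerges from this comparison. Nor can it emerge from association alone, since replacing $B_{\mathbf x}$ by $B_{\mathbf x}+L$ with $L$ skew-Hermitian with respect to every component of $H$ preserves (\ref{assoc1}); in \cite{S} the trace constraint comes from the completeness/reality analysis, not from closure of the graded algebra under brackets. (3) The sufficiency direction --- that any pair $(a,b)$ satisfying (i)--(iii) gives a complete field --- is acknowledged but entirely deferred to \cite{S}, so as written the proposal establishes at best one inclusion.
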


Finally, let us recall the well-known classification, up to linear equivalence, of homogeneous convex cones in dimensions $k=2,3,4$ not containing lines (see, e.g., \cite[pp.~38--41]{KT}), which will be also required for our proofs of Theorems \ref{main}--\ref{main2}:

\begin{itemize}

\item [$k=2$:] 

\begin{itemize}

\item[]

$\Omega_1:=\left\{(x_1,x_2)\in\RR^2:x_1>0,\,\,x_2>0\right\}$, where the algebra ${\mathfrak g}(\Omega_1)$ consists of all diagonal matrices, hence $\dim {\mathfrak g}(\Omega_1)=2$,
\end{itemize}
\vspace{0.3cm}

\item [$k=3$:] 

\begin{itemize}

\item[(i)] $\Omega_2:=\left\{(x_1,x_2,x_3)\in\RR^3:x_1>0,\,\,x_2>0,\,\,x_3>0\right\}$, where the algebra ${\mathfrak g}(\Omega_2)$ consists of all diagonal matrices, hence $\dim {\mathfrak g}(\Omega_2)=3$,
\vspace{0.1cm}

\item[(ii)] $\Omega_3:=C_3=\left\{(x_1,x_2,x_3)\in\RR^3:x_1^2-x_2^2-x_3^2>0,\,\,x_1>0\right\}$, where one has ${\mathfrak g}(\Omega_3)={\mathfrak c}({\mathfrak{gl}}_3(\RR))\oplus{\mathfrak o}_{1,2}$, hence $\dim {\mathfrak g}(\Omega_3)=4$; here for any Lie algebra ${\mathfrak h}$ we denote by ${\mathfrak c}({\mathfrak h})$ its center,

\end{itemize}
\vspace{0.3cm}

\item [$k=4$:] 

\begin{itemize}

\item[(i)] $\Omega_4:=\left\{(x_1,x_2,x_3,x_4)\in\RR^4:x_1>0,\,\,x_2>0,\,\,x_3>0,\,\,x_4>0\right\}$, where the algebra ${\mathfrak g}(\Omega_4)$ consists of all diagonal matrices, hence we have $\dim {\mathfrak g}(\Omega_4)=4$,
\vspace{0.1cm}

\item[(ii)] $\Omega_5:=\left\{(x_1,x_2,x_3,x_4)\in\RR^4: x_1^2-x_2^2-x_3^2>0,\,\,x_1>0,\,\,x_4>0\right\}$, where the algebra ${\mathfrak g}(\Omega_5)=\left({\mathfrak c}({\mathfrak{gl}}_3(\RR))\oplus{\mathfrak o}_{1,2}\right)\oplus\RR$ consists of block-diagonal matrices with blocks of sizes $3\times 3$ and $1\times 1$ corresponding to the two summands, hence $\dim {\mathfrak g}(\Omega_5)=5$,
\vspace{0.1cm}

\item[(iii)] $\Omega_6:=C_4=\left\{(x_1,x_2,x_3,x_4)\in\RR^4: x_1^2-x_2^2-x_3^2-x_4^2>0,\,\,x_1>0\right\}$, where ${\mathfrak g}(\Omega_6)={\mathfrak c}({\mathfrak{gl}}_4(\RR))\oplus{\mathfrak o}_{1,3}$ and $\dim {\mathfrak g}(\Omega_6)=7$.
\end{itemize}
\end{itemize}

We are now ready to prove Theorems \ref{main}--\ref{main2}. 

\section{Proof of Theorem \ref{main}}\label{proof}
\setcounter{equation}{0}

By \cite{VGP-S}, \cite{N2}, the manifold $M$ is biholomorphic to an affinely homogeneous Siegel domain of the second kind $S(\Omega,H)$. Since $n^2-4\ge 2n$, it follows that $n\ge 4$. Also, as $M$ is not biholomorphic to $B^n$, we have $k\ge 2$.

The following simple lemma rules out a majority of other possibilities.

\begin{lemma}\label{n5k3} \it For $n\ge 6$ one cannot have $k\ge 3$ and for $n=5$ one cannot have $k=4$. 
\end{lemma}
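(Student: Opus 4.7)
The plan is to use the upper bound (\ref{estim4}) directly. Since we are assuming $d(M)=n^2-4$, the inequality (\ref{estim4}) gives the necessary condition
$$
n^2-4\le \frac{3k^2}{2}-\left(2n+\frac{5}{2}\right)k+n^2+4n+1,
$$
which, after clearing denominators, is equivalent to
$$
f(k):=3k^2-(4n+5)k+8n+10\ge 0.
$$
Thus, to prove the lemma, it suffices to show that $f(k)<0$ in all the ruled-out ranges of $(n,k)$.

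The quadratic $f$ in $k$ opens upward, so whenever $f$ is negative at both endpoints of a $k$-interval, it is negative throughout. I would therefore just evaluate $f$ at the two extremal values of $k$ permitted in a Siegel realization of $M$, namely $k=3$ (the smallest excluded value) and $k=n$ (the largest possible value, giving a Siegel domain of the first kind). A direct computation yields
$$
f(3)=22-4n,\qquad f(n)=-n^2+3n+10.
$$
Both quantities are negative exactly when $n\ge 6$, so for every $n\ge 6$ and every integer $k$ with $3\le k\le n$ one has $f(k)<0$, contradicting (\ref{estim4}).

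The remaining case in the statement is $n=5$, $k=4$, which is handled by one more arithmetic check: $f(4)=48-4(25)+50=-2<0$, again contradicting (\ref{estim4}). This completes the argument.

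There is essentially no obstacle here: the whole lemma is an elementary application of the already-established bound (\ref{estim4}). The only thing to be careful about is noting that $k\le n$ (so that one really only needs to check the two endpoints $k=3$ and $k=n$ thanks to convexity of $f$), and that the hypothesis $d(M)=n^2-4$ enters purely through the substitution into (\ref{estim4}). No finer information on $\mathfrak{g}_{1/2}$, $\mathfrak{g}_1$, or the cone $\Omega$ is needed at this stage; those refinements will be required only once the lemma has reduced the problem to the handful of low-dimensional cases $(n,k)$ with $n\le 5$ or $k\le 2$ (plus $n=5,\,k\le 3$).
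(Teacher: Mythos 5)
Your proposal is correct and follows essentially the same route as the paper: both derive from (\ref{estim4}) the necessary condition that the quadratic $\tfrac{3k^2}{2}-\bigl(2n+\tfrac{5}{2}\bigr)k+4n+5$ be nonnegative, and then show it is in fact negative for all integers $k$ with $3\le k\le n$ when $n\ge 6$, plus the single extra case $(n,k)=(5,4)$. The only difference is cosmetic: you verify negativity by evaluating at the endpoints $k=3$ and $k=n$ and invoking convexity, whereas the paper locates the two roots via the discriminant and checks they straddle the interval $[3,n]$; your endpoint check is, if anything, slightly cleaner arithmetic.
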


\begin{proof} To establish the lemma, it suffices to see that for $k\ge 3$, $n\ge 6$, as well as for $k=4$, $n=5$, the right-hand side of inequality (\ref{estim4}) is strictly less than $n^2-4$, i.e., that for such $k,n$ the following holds:
\begin{equation}
\frac{3k^2}{2}-\left(2n+\frac{5}{2}\right)k+4n+5<0.\label{basicinequality}
\end{equation}

Let us study the quadratic function
$$
\varphi(t):=\frac{3t^2}{2}-\left(2n+\frac{5}{2}\right)t+4n+5.
$$
Its discriminant is
$$
{\mathcal D}:=4n^2-14n-\frac{95}{4},
$$
which is easily seen to be positive for $n\ge 5$. Then the zeroes of $\varphi$ are
$$
\begin{array}{l}
\displaystyle t_1:=\frac{2n+\frac{5}{2}-\sqrt{{\mathcal D}}}{3},\\
\vspace{-0.1cm}\\
\displaystyle t_2:=\frac{2n+\frac{5}{2}+\sqrt{{\mathcal D}}}{3}.
\end{array}
$$

To establish the lemma for $n\ge 6$, $k\ge 3$, it suffices to show that: (i) $t_2>n$ for $n\ge 6$ and (ii) $t_1<3$ for $n\ge 6$. The inequality $t_2>n$ means that  
$$
n-\frac{5}{2}< \sqrt{{\mathcal D}},
$$
or, equivalently, that  
$$
n^2-3n-10>0,
$$
which is straightforward to verify for $n\ge 6$. Next, the inequality $t_1<3$ means that
$$
2n-\frac{13}{2}< \sqrt{{\mathcal D}},
$$
or, equivalently, that
$$
n>\frac{11}{2},
$$
which clearly holds for $n\ge 6$.

Finally, the pair $k=4$, $n=5$ obviously satisfies inequality (\ref{basicinequality}). \end{proof}

By Lemma \ref{n5k3}, in order to establish the theorem, we need to consider the following five cases: (1) $k=2$, $n\ge 4$, (2) $k=3$, $n=4$, (3) $k=3$, $n=5$, (4) $k=4$, $n=4$, (5) $k=5$, $n=5$. 
\vspace{-0.1cm}\\

{\bf Case (1).} Suppose that $k=2$, $n\ge 4$. Recall from Section \ref{prelim} that in this case $S(\Omega,H)$ is biholomorphic to a product of two unit balls $B^{\ell}\times B^{n-\ell}$, $1\le\ell\le n-1$. We have
\begin{equation}
d(B^{\ell}\times B^{n-\ell})=2\ell^2-2n\ell+n^2+2n.\label{dimautgrballs}
\end{equation}
Rather than trying to directly find all values of $\ell$ for which the above expression is equal to $n^2-4$, we will argue as follows.

We have that $H=(H_1,H_2)$ is a pair of Hermitian forms on $\CC^{n-2}$. After a linear change of $z$-variables, we may assume that $H_1$ is positive-definite. In this situation, by applying a linear change of $w$-variables, $H_1$, $H_2$ can be simultaneously diagonalized as
$$
H_1(w,w)=||w||^2,\,\,\, H_2(w,w)=\sum_{j=1}^{n-2}\lambda_j|w_j|^2.
$$
If all the eigenvalues of $H_2$ are equal, $S(\Omega,H)$ is linearly equivalent either to
\begin{equation} 
D_1:=\left\{(z,w)\in\CC^2\times\CC^{n-2}:\Im z_1-||w||^2>0,\,\,\Im z_2>0\right\},\label{domaindd1}
\end{equation}
or to
\begin{equation} 
D_2:=\left\{(z,w)\in\CC^2\times\CC^{n-2}:\Im z_1-||w||^2>0,\,\,\Im z_2-||w||^2>0\right\}.\label{domaindd2}
\end{equation} 
The domain $D_1$ is biholomorphic to the product $B^1\times B^{n-1}$, hence we have\linebreak $d(D_1)=n^2+2>n^2-4$, which shows that $S(\Omega,H)$ cannot be equivalent to $D_1$. 

Next, we will observe that $D_2$ is not homogeneous (cf.~\cite[Example 1]{N1}). One way to show this is to compute the connected identity component $G(\Omega_1,(||w||^2,||w||^2))^{\circ}$ of the group $G(\Omega_1,(||w||^2,||w||^2))$. It is straightforward to see that
$$
G(\Omega_1,(||w||^2,||w||^2))^{\circ}=\left\{\left(\begin{array}{cc}
a & 0\\
0 & a
\end{array}
\right),\,\,a>0\right\},
$$ 
and it is then clear that the action of $G(\Omega_1,(||w||^2,||w||^2))^{\circ}$ is not transitive on $\Omega_1$. This proves that $S(\Omega,H)$ cannot be equivalent to $D_2$ either. Therefore, $H_2$ has at least one  pair of distinct eigenvalues.

Next, as $\dim{\mathfrak g}(\Omega)=2$, inequality (\ref{estim2}) yields
\begin{equation}
s\ge n^2-4n-2.\label{estim5}
\end{equation} 
On the other hand, by (\ref{ests}), we have
$$
s\le n^2-4n+4.
$$
More precisely, $s$ is calculated as
\begin{equation}
s=n^2-4n+4-2m,\label{estim7}
\end{equation}
where $m\ge 1$ is the number of pairs of distinct eigenvalues of $H_2$. This fact is a consequence of the following lemma, to which we will repeatedly refer throughout the paper (cf.~\cite[Lemma 3.9]{Isa7}):

\begin{lemma}\label{dimsubspaceskewherm} \it Let ${\mathcal H}$ be a Hermitian matrix of size $r\times r$ and ${\mathcal K}$ the vector space of skew-Hermitian matrices of size $r\times r$ that are at the same time skew-Hermitian with respect to ${\mathcal H}$:
$$
{\mathcal K}:=\left\{B\in{\mathfrak{gl}}_r(\CC): B^T+\bar B=0,\,\,B^T{\mathcal H}+{\mathcal H}\bar B=0\right\}.
$$
Then $\dim{\mathcal K}=r^2-2p$, where $p$ is the number of pairs of distinct eigenvalues of ${\mathcal H}$. Hence, if $\dim{\mathcal K}=r^2$, then ${\mathcal H}$ is a scalar matrix.
\end{lemma}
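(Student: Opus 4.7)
The plan is to first combine the two defining conditions of $\mathcal K$ into the single requirement that a suitable matrix is skew-Hermitian and commutes with $\mathcal H$, then to diagonalize $\mathcal H$ unitarily and read off the dimension from the resulting block structure.

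First I would rewrite the defining relations. The condition $B^T+\bar B=0$ is exactly $B^*=-B$. Substituting $B^T=-\bar B$ into the second relation $B^T\mathcal H+\mathcal H\bar B=0$ gives $\mathcal H\bar B=\bar B\mathcal H$, and conversely the second relation follows from the first together with this commutation. Setting $C:=\bar B$, the map $B\mapsto C$ is an $\RR$-linear bijection of ${\mathfrak{gl}}_r(\CC)$ that preserves skew-Hermiticity, so
\[
\dim\mathcal K=\dim\bigl\{C\in{\mathfrak{gl}}_r(\CC):C^*=-C,\ [C,\mathcal H]=0\bigr\}.
\]

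Next, I would diagonalize $\mathcal H$. Choose a unitary $U$ with $U^*\mathcal HU=D$ real diagonal, and set $\tilde C:=U^*CU$. This is an $\RR$-linear automorphism of ${\mathfrak{gl}}_r(\CC)$ that preserves skew-Hermiticity and transforms $[C,\mathcal H]=0$ into $[\tilde C,D]=0$. The latter reads $(d_i-d_j)\tilde C_{ij}=0$, where $d_1,\dots,d_r$ are the diagonal entries of $D$, forcing $\tilde C_{ij}=0$ whenever $d_i\neq d_j$. Thus $\tilde C$ is block-diagonal with blocks indexed by the eigenspaces of $\mathcal H$.

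Finally, I would count. If the distinct eigenvalues of $\mathcal H$ have multiplicities $m_1,\dots,m_q$, then the skew-Hermitian matrices that are block-diagonal with block sizes $m_1,\dots,m_q$ form a real vector space of dimension $\sum_i m_i^2$, since the skew-Hermitian $m\times m$ complex matrices already form an $m^2$-dimensional real space. Writing $p=\sum_{i<j}m_im_j$ (this is the number of unordered pairs of diagonal positions whose $D$-entries differ) and using $r^2=(\sum_i m_i)^2=\sum_i m_i^2+2\sum_{i<j}m_im_j$, we obtain $\dim\mathcal K=r^2-2p$. The ``hence'' statement is then immediate: $\dim\mathcal K=r^2$ forces $p=0$, so all $d_i$ coincide, meaning $\mathcal H$ is a scalar matrix. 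The argument is straightforward throughout; the only thing to verify carefully is the equivalence of the reformulated conditions, which is a one-line substitution.
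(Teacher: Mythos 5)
Your proof is correct and follows essentially the same route as the paper: unitarily diagonalize $\mathcal H$ and observe that the off-diagonal entries $B_{ij}$ with $\mu_i\ne\mu_j$ are forced to vanish, each such unordered pair costing two real dimensions. The preliminary reformulation via $C=\bar B$ commuting with $\mathcal H$ is a harmless cosmetic extra; the paper reaches the same entrywise condition by direct substitution.
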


\begin{proof} Let $\mu_1,\dots,\mu_r$ be the eigenvalues of ${\mathcal H}$. By applying a suitable unitary transformation, we may assume that ${\mathcal H}$ is diagonal. Then if
$$
B=\left(B_{ij}\right),\,\,\, B_{ij}=-\overline{B}_{ji},\,\,\, i,j=1,\dots,r,
$$
is a skew-symmetric matrix, the condition of the skew-symmetricity of $B$ with respect to ${\mathcal H}$ is written as
$$
B_{ij}\mu_i=-\overline{B}_{ji}\mu_j,\,\,\, i,j=1,\dots,r,
$$
which leads to $B_{ij}=0$ if $\mu_i\ne\mu_j$. \end{proof}

By (\ref{estim5}), (\ref{estim7}) it follows that $1\le m\le 3$, thus we have either (a) $n=4$ and $\lambda_1\ne\lambda_2$ (here $m=1$, $s=2$), or (b) $n=5$ and, upon permutation of $w$-variables, $\lambda_1\ne\lambda_2=\lambda_3$ (here $m=2$, $s=5$), or (c) $n=5$ and $\lambda_1$, $\lambda_2$, $\lambda_3$ are pairwise distinct (here $m=3$, $s=3$), or (d) $n=6$ and, upon permutation of $w$-variables, $\lambda_1\ne\lambda_2=\lambda_3=\lambda_4$ (here $m=3$, $s=10$).

Now that we have limited $n$ to the range $4,5,6$, it is easy to find all values of $\ell$ for which the right-hand side of (\ref{dimautgrballs}) is equal to $n^2-4$. It turns out that this can only happen for $n=6$, $\ell=2$, which arises from (d). Thus, we see that Case (1) only contributes $B^2\times B^4$ to the classification of homogeneous hyperbolic $n$-dimensional manifolds with automorphism group dimension $n^2-4$ (here $d(B^2\times B^4)=32=n^2-4$). 
\vspace{0.1cm}

{\bf Case (2).} Suppose that $k=3$, $n=4$. Here $S(\Omega,H)$ is linearly equivalent either to
$$
D_3:=\left\{(z,w)\in\times\CC^3\times\CC:\Im z-v|w|^2\in\Omega_2\right\},\label{domaind7}
$$
where $v=(v_1,v_2,v_3)$ is a non-zero vector in $\RR^3$ with non-negative entries, or to
\begin{equation}
D_4:=\left\{(z,w)\in\times\CC^3\times\CC:\Im z-v|w|^2\in\Omega_3\right\},\label{domaind8}
\end{equation}
where $v=(v_1,v_2,v_3)$ is a vector in $\RR^3$ satisfying $v_1^2\ge v_2^2+v_3^2$, $v_1>0$. Notice, however, that if $S(\Omega,H)$ is equivalent to $D_3$, it must be biholomorphic to the product $B^1\times B^1\times B^2$, which is impossible since $d(B^1\times B^1\times B^2)=14>12=n^2-4$. Thus, $S(\Omega,H)$ is in fact equivalent to the domain $D_4$. 

Suppose first that $v_1^2>v_2^2+v_3^2$, i.e., that $v\in\Omega_3$. As the vector $v$ is an eigenvector of every element of $G(\Omega_3,v|w|^2)$, it then follows that $G(\Omega_3,v|w|^2)$ does not act transitively on $\Omega_3$. Therefore, we have $v_1=\sqrt{v_2^2+v_3^2}\ne 0$, i.e., $v\in\partial\Omega_3\setminus\{0\}$. In this case, by \cite[Lemma 3.8 and Remark 3.9]{Isa6}, as well as by \cite[Lemma 3.6]{Isa7}, we see $d(D_4)=10<12=n^2-4$. Hence, $S(\Omega,H)$ cannot in fact be equivalent to $D_4$, so Case (2) contributes nothing to our classification.

\begin{remark}\label{psexample}
We note that for $v\in\partial\Omega_3\setminus\{0\}$ the domain $D_4$ is linearly equivalent to the domain ${\mathcal D}$ defined in (\ref{psmathcald}), which is linearly equivalent to the famous example of a bounded non-symmetric homogeneous domain in $\CC^4$ given by I.~Pyatetskii-Shapiro in 1959 (see \cite[pp.~26--28]{P-S}).
\end{remark}
\vspace{0.1cm}

{\bf Case (3).} Suppose that $k=3$, $n=5$. Here $S(\Omega,H)$ is linearly equivalent either to
\begin{equation}
D_5:=\left\{(z,w)\in\times\CC^3\times\CC^2:\Im z-{\mathcal H}(w,w)\in\Omega_2\right\},\label{domaind9}
\end{equation}
where ${\mathcal H}$ is an $\Omega_2$-Hermitian form, or to
\begin{equation}
D_6:=\left\{(z,w)\in\times\CC^3\times\CC^2:\Im z-{\mathcal H}(w,w)\in\Omega_3\right\},\label{domaind10}
\end{equation}
where ${\mathcal H}$ is an $\Omega_3$-Hermitian form. If $S(\Omega,H)$ is equivalent to $D_5$, it must be biholomorphic to a product of three unit balls, and it is immediate to see that the only possibility is $B^1\times B^1\times B^3$ with $d(B^1\times B^1\times B^3)=21=n^2-4$.

Assume that $S(\Omega,H)$ is equivalent to the domain $D_6$. By Lemma \ref{dimsubspaceskewherm}, we have either $s\le 2$ or $s=4$. If $s\le 2$, then, recalling that $\dim{\mathfrak g}(\Omega_3)=4$, by inequality (\ref{estim2}) we see
$
d(D_6)\le 20<21=n^2-4.
$
Therefore, one in fact has $s=4$. Let ${\mathcal H}=({\mathcal H}_1,{\mathcal H}_2,{\mathcal H}_3)$ and ${\mathbf H}$ be a positive-definite linear combination of ${\mathcal H}_1$, ${\mathcal H}_2$, ${\mathcal H}_3$. By applying a linear change of the $w$-variables, we can diagonalize ${\mathbf H}$ as\linebreak ${\mathbf H}(w,w)=||w||^2$. By Lemma \ref{dimsubspaceskewherm}, each of the $\CC$-valued Hermitian forms ${\mathcal H}_1$, ${\mathcal H}_2$, ${\mathcal H}_3$ is proportional to ${\mathbf H}$. This shows that ${\mathcal H}(w,w)=v||w||^2$, where $v=(v_1,v_2,v_3)$ is a vector in $\RR^3$ satisfying $v_1^2\ge v_2^2+v_3^2$, $v_1>0$. 

As in Case (2), we now observe that $v$ is an eigenvector of every element of $G(\Omega_3,v||w||^2)$. Then, if $v_1^2>v_2^2+v_3^2$, it follows that $G(\Omega_3,v||w||^2)$ does not act transitively on $\Omega_3$. Therefore $v_1=\sqrt{v_2^2+v_3^2}\ne 0$, i.e., $v\in\partial\Omega_3\setminus\{0\}$. As the group $G(\Omega_3)^{\circ}=\RR_{+}\times\SO_{1,2}^{\circ}$ acts transitively on $\partial\Omega_3\setminus\{0\}$, we can suppose that $v=(1,1,0)$, so ${\mathcal H}(w,w)=(||w||^2,||w||^2,0)$.

We will now prove a lemma that works for domains slightly more general than $D_6$. Namely, let $N\ge 1$, and $\tilde{\mathcal H}$ be an $\Omega_3$-Hermitian form on $\CC^N$ defined as
\begin{equation}
\tilde{\mathcal H}(w,w'):=\left(\sum_{j=1}^N\bar w_jw_j',\sum_{j=1}^N\bar w_jw_j',0\right).\label{hermform1}
\end{equation}
Clearly, if $N=2$ and $v=(1,1,0)$ we have $\tilde{\mathcal H}={\mathcal H}$. Set
\begin{equation}
\tilde D_6:=\left\{(z,w)\in\times\CC^3\times\CC^N:\Im z-\tilde{\mathcal H}(w,w)\in\Omega_3\right\}.\label{domaintilded10}
\end{equation}
We will  now obtain (cf.~\cite[Lemma 3.8]{Isa6}):

\begin{lemma}\label{g12d10}\it For ${\mathfrak g}={\mathfrak g}(\tilde D_6)$ one has ${\mathfrak g}_{1/2}=0$.
\end{lemma}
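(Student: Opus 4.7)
The plan is to apply Theorem \ref{descrg1/2} directly and show that both data $\Phi$ and $c$ parametrizing an element of ${\mathfrak g}_{1/2}$ must vanish, given the very degenerate shape of $\tilde{\mathcal H}$ (whose third component is identically zero) combined with the explicit description of ${\mathfrak g}(\Omega_3)$.

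First, I would analyze the $\Phi$-part. For any ${\mathbf w}\in\CC^N$, the $\RR$-linear functional $x\mapsto\Im\tilde{\mathcal H}({\mathbf w},\Phi(x))$ on $\RR^3$ has the special form $(f({\mathbf w},x),f({\mathbf w},x),0)$, where $f({\mathbf w},x):=\Im\sum_j\overline{{\mathbf w}}_j\Phi_j(x)$. Thus the matrix $\Phi_{{\mathbf w}}$ has a zero third row and two identical first rows. Now ${\mathfrak g}(\Omega_3)={\mathfrak c}({\mathfrak{gl}}_3(\RR))\oplus{\mathfrak o}_{1,2}$ consists of matrices $\lambda I+M$, where $M$ is symmetric in the $(1,2),(1,3)$ entries and antisymmetric in the $(2,3)$ entry (relative to $J=\mathrm{diag}(1,-1,-1)$). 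Comparing: the $(3,3)$-entry forces $\lambda=0$; then the $(1,1)$ and $(2,2)$ entries of $\Phi_{{\mathbf w}}$ must vanish; the symmetry of $M$ in the $(1,3)$-slot versus the zero third row forces the $(1,3)$ entry to vanish, and similarly for $(1,2)$ via the $(2,1)$ entry. Hence $\Phi_{{\mathbf w}}=0$ for all ${\mathbf w}$, i.e.\ $\Im\sum_j\overline{{\mathbf w}}_j\Phi_j(x)=0$ for all $x\in\RR^3$ and ${\mathbf w}\in\CC^N$. Letting ${\mathbf w}$ run through $e_j$ and $ie_j$ then yields $\Phi_j(x)=0$ on $\RR^3$, and $\CC$-linearity gives $\Phi\equiv 0$.

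Next, with $\Phi=0$, condition (\ref{cond1}) collapses to $\tilde{\mathcal H}(w,c(w',w'))=0$ for all $w,w'\in\CC^N$. Since the first component of $\tilde{\mathcal H}$ is the standard positive-definite Hermitian form $\sum_j\bar w_j w_j'$, this forces $c(w',w')=0$ for every $w'\in\CC^N$, and polarization of the symmetric $\CC$-bilinear form $c$ yields $c\equiv 0$. By Theorem \ref{descrg1/2}, this proves ${\mathfrak g}_{1/2}=0$.

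The only delicate point I anticipate is the careful matching of entries in the membership condition $\Phi_{{\mathbf w}}\in{\mathfrak g}(\Omega_3)$: one must simultaneously exploit the vanishing third row of $\Phi_{{\mathbf w}}$ and the sign pattern of ${\mathfrak o}_{1,2}$ to rule out each off-diagonal coefficient of the functional $f({\mathbf w},\cdot)$, rather than only the obvious diagonal ones. Once this bookkeeping is done, the rest is immediate.
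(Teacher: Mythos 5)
Your proposal is correct and follows essentially the same route as the paper: compute $\Phi_{\mathbf w}$, observe it has two equal rows and a zero third row, compare entrywise with the explicit description of ${\mathfrak g}(\Omega_3)$ to force $\Phi=0$, and then use (\ref{cond1}) together with the positive-definiteness of the first component of $\tilde{\mathcal H}$ to kill $c$. The only difference is that you spell out the entry-matching and the $c=0$ step in more detail than the paper, which simply asserts that the membership condition "leads to the relations" $\sum_j\Im(\bar{\mathbf w}_j\varphi^j_i)\equiv 0$.
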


\begin{proof} We will apply Theorem \ref{descrg1/2} to the cone $\Omega_3$ and the $\Omega_3$-Hermitian form $\tilde{\mathcal H}$. Let $\Phi:\CC^3\to\CC^N$ be a $\CC$-linear map given by a matrix $(\varphi^j_i)$, with $j=1,\dots,N$, $i=1,2,3$. Fixing ${\mathbf w}\in\CC^N$, for $x\in\RR^3$ we compute
$$
\begin{array}{l}
\hspace{-0.1cm}\displaystyle{\mathcal H}({\mathbf w},\Phi(x))=\left(\sum_{j=1}^N\bar {\mathbf w}_j(\varphi^j_1x_1+\varphi^j_2x_2+\varphi^j_3x_3),\sum_{j=1}^N\bar {\mathbf w}_j(\varphi^j_1x_1+\varphi^j_2x_2+\varphi^j_3x_3),0\right)=\\
\vspace{-0.1cm}\\
\displaystyle\left(x_1\cdot\sum_{j=1}^N\bar {\mathbf w}_j\varphi^j_1+x_2\cdot\sum_{j=1}^N\bar {\mathbf w}_j\varphi^j_2+x_3\cdot\sum_{j=1}^N\bar {\mathbf w}_j\varphi^j_3,\right.\\
\vspace{-0.3cm}\\
\hspace{3cm}\displaystyle\left.x_1\cdot\sum_{j=1}^N\bar {\mathbf w}_j\varphi^j_1+x_2\cdot\sum_{j=1}^N\bar {\mathbf w}_j\varphi^j_2+x_3\cdot\sum_{j=1}^N\bar {\mathbf w}_j\varphi^j_3,0\right).
\end{array}
$$
Then from formula (\ref{Phiw0}) we see
$$
\begin{array}{l}
\displaystyle\Phi_{{\mathbf w}}(x)=\left(x_1\cdot\sum_{j=1}^N\Im(\bar {\mathbf w}_j\varphi^j_1)+x_2\cdot\sum_{j=1}^N\Im(\bar {\mathbf w}_j\varphi^j_2)+x_3\cdot\sum_{j=1}^N\Im(\bar {\mathbf w}_j\varphi^j_3),\right.\\
\vspace{-0.3cm}\\
\hspace{2cm}\displaystyle\left.x_1\cdot\sum_{j=1}^N\Im(\bar {\mathbf w}_j\varphi^j_1)+x_2\cdot\sum_{j=1}^N\Im(\bar {\mathbf w}_j\varphi^j_2)+x_3\cdot\sum_{j=1}^N\Im(\bar {\mathbf w}_j\varphi^j_3),0\right).
\end{array}
$$

Recall now that
\begin{equation}
{\mathfrak g}(\Omega_3)={\mathfrak c}({\mathfrak{gl}}_3(\RR))\oplus{\mathfrak o}_{1,2}=\left\{
\left(\begin{array}{lrl}
\lambda & p & q\\
p & \lambda & r\\
q & -r & \lambda
\end{array}
\right),\,\,\,\lambda,p,q,r\in\RR\right\}.\label{alggomega3}
\end{equation}
It is then clear that the condition that $\Phi_{{\mathbf w}}$ lies in ${\mathfrak g}(\Omega_3)$ for every ${\mathbf w}\in\CC^2$ leads to the relations
$$
\sum_{j=1}^N\Im(\bar {\mathbf w}_j\varphi^j_i)\equiv 0,\,\,i=1,2,3,
$$
which yield $\Phi=0$. By formula (\ref{cond1}) we then see that ${\mathfrak g}_{1/2}=0$ as required. \end{proof}

It follows from estimate (\ref{estim 8}), the second inequality in (\ref{estimm}), and Lemma \ref{g12d10} for $N=2$ that for ${\mathcal H}(w,w)=(||w||^2,||w||^2,0)$ we have
\begin{equation}
d(D_6)\le 18<21=n^2-4\label{essstimm10} 
\end{equation}
(here $s=4$ and $\dim{\mathfrak g}(\Omega_3)=4$). This shows that $S(\Omega,H)$ cannot in fact be equivalent to $D_6$, so Case (3) only contributes $B^1\times B^1\times B^3$ to the classification of homogeneous hyperbolic $n$-dimensional manifolds with automorphism group dimension $n^2-4$.

Although this is not required for our proof of Theorem \ref{main}, we will now find the dimension of the component ${\mathfrak g}_1$ of ${\mathfrak g}={\mathfrak g}(\tilde D_6)$ (cf.~\cite[Proposition A3]{Isa6}). This will allow us not only to improve bound (\ref{essstimm10}) but to compute the value $d(D_6)$ precisely, which will be useful when considering lower automorphism group dimensions. Furthermore, the proof below is independently interesting as it contains explicit computations with the fairly bulky formulas supplied by Theorem \ref{descrg1}, which is rarely seen in the literature.

\begin{lemma}\label{g1d10}\it For ${\mathfrak g}={\mathfrak g}(\tilde D_6)$ one has $\dim{\mathfrak g}_1=1$.
\end{lemma}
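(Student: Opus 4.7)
The plan is to use Theorem \ref{descrg1} to describe ${\mathfrak g}_1$ explicitly, first pinning down the symmetric bilinear form $a$ and then showing that for each admissible $a$ the map $b$ is forced to vanish. The key simplification throughout is that the third component of $\tilde{\mathcal H}$ is identically zero, so all outputs of $\tilde{\mathcal H}$ lie in $\CC\cdot(1,1,0)$.

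First, I would parametrize $a\co\RR^3\times\RR^3\to\RR^3$ by symmetric coefficients $c^\alpha_{ij}=c^\alpha_{ji}$ via $(A_{\mathbf{x}})^\alpha_\beta=\sum_i c^\alpha_{i\beta}\mathbf{x}_i$ and impose the constraints from (\ref{alggomega3}) (equal diagonal entries, symmetric $(1,2)$- and $(1,3)$-positions, antisymmetric $(2,3)$-position). The resulting linear system collapses the space of candidates to a small family, with entries $\lambda(\mathbf{x}),p(\mathbf{x}),q(\mathbf{x}),r(\mathbf{x})$ each linear in $\mathbf{x}$. A further restriction comes from the association relation (\ref{assoc1}) applied to $\tilde{\mathcal H}$: since $\tilde{\mathcal H}(w,w')=\langle w,w'\rangle\,(1,1,0)$, the vector $A_{\mathbf{x}}(1,1,0)=(\lambda+p,\lambda+p,q-r)$ must also lie in $\CC\cdot(1,1,0)$, which forces $q(\mathbf{x})\equiv r(\mathbf{x})$. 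Substituting this back into the family reduces it to a single real parameter $t$, with $\lambda(\mathbf{x})=-p(\mathbf{x})=t(\mathbf{x}_1-\mathbf{x}_2)$ and $q(\mathbf{x})=r(\mathbf{x})=t\mathbf{x}_3$; in particular $\lambda(\mathbf{x})+p(\mathbf{x})\equiv 0$.

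Next I would show $b$ must vanish. Write $b(z,u)=z_1 b_1 u+z_2 b_2 u+z_3 b_3 u$ with $b_i\in\mathfrak{gl}_N(\CC)$. Condition (i), combined with $\lambda+p\equiv 0$, simplifies the association condition to $B_{\mathbf{x}}+B_{\mathbf{x}}^{*}=0$ and $\Im\tr B_{\mathbf{x}}=0$, so each $b_i$ is skew-Hermitian and traceless. Condition (iii) only probes $b$ along inputs of the form $\langle\cdot,\cdot\rangle(1,1,0)$, so it becomes a single identity on $M:=b_1+b_2$; setting the first and third arguments equal and exploiting the nondegeneracy of $\langle\cdot,\cdot\rangle$ yields that every unit vector is an eigenvector of $M^{*}$, forcing $M=\mu I$ with $\mu\in\RR$, and tracelessness then gives $b_2=-b_1$. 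Finally, condition (ii) requires $B_{\mathbf{w},\mathbf{w}'}\in{\mathfrak g}(\Omega_3)$ for every $\mathbf{w},\mathbf{w}'\in\CC^N$; computing this matrix explicitly using $b_2=-b_1$ shows its third row vanishes identically, and matching this against the pattern (\ref{alggomega3}) forces both $\Im\langle\mathbf{w}',b_1\mathbf{w}\rangle=0$ and $\Im\langle\mathbf{w}',b_3\mathbf{w}\rangle=0$ for all $\mathbf{w},\mathbf{w}'$. Replacing $\mathbf{w}'$ by $i\mathbf{w}'$ then upgrades these to $\langle\mathbf{w}',b_1\mathbf{w}\rangle=\langle\mathbf{w}',b_3\mathbf{w}\rangle=0$ for all $\mathbf{w},\mathbf{w}'$, hence $b_1=b_3=0$ and thus $b=0$.

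To finish, I would verify that $(a_t,0)$ with $a_t$ the one-parameter family from the first step does satisfy (i)--(iii): conditions on $b$ are automatic for $b=0$, and the association condition reduces to $A_{\mathbf{x}}\tilde{\mathcal H}(w,w')=0$, which holds because $A_{\mathbf{x}}(1,1,0)=0$ by our explicit formulas. This gives $\dim{\mathfrak g}_1=1$. The main technical obstacle is the bookkeeping in the first step, namely solving the linear system defining the admissible $a$ using (\ref{alggomega3}) and symmetry; once this is done the rest of the argument is a short unpacking of (i)--(iii) in which the degenerate shape of $\tilde{\mathcal H}$ does almost all of the work.
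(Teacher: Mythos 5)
Your proposal is correct and follows essentially the same route as the paper's proof: both directly unpack conditions (i)--(iii) of Theorem \ref{descrg1} for the degenerate form $\tilde{\mathcal H}$, cut the admissible $a$ down to the one-parameter family $a(x,x)=t\bigl((x_1-x_2)^2+x_3^2,\,-(x_1-x_2)^2+x_3^2,\,2(x_1-x_2)x_3\bigr)$, and show $b=0$. The only cosmetic differences are the order of operations --- the paper kills $b$ first using condition (ii) alone (which in fact renders your detour through condition (iii) and the matrix $M=b_1+b_2$ unnecessary, since the vanishing of the third row of $B_{\mathbf w,\mathbf w'}$ already forces all three coefficient matrices of $b$ to vanish) and then extracts the remaining relations on $a$ from condition (i) with $B_{\mathbf x}=0$, whereas you impose the $B$-independent consequence $q\equiv r$ of condition (i) up front.
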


\begin{proof} We will utilize Theorem \ref{descrg1} for the cone $\Omega_3$ and the $\Omega_3$-Hermitian form $\tilde {\mathcal H}$ given by (\ref{hermform1}). Consider a symmetric $\RR$-bilinear form on $\RR^3$ with values in $\RR^3$:
$$
\begin{array}{l}
a(x,x)=\left(a_{11}^1x_1^2+a_{22}^1x_2^2+a_{33}^1x_3^2+2a_{12}^1x_1x_2+2a_{13}^1x_1x_3+2a_{23}^1x_2x_3,\right.\\
\vspace{-0.3cm}\\
\hspace{2cm}\left.a_{11}^2x_1^2+a_{22}^2x_2^2+a_{33}^2x_3^2+2a_{12}^2x_1x_2+2a_{13}^2x_1x_3+2a_{23}^2x_2x_3,\right.\\
\vspace{-0.3cm}\\
\hspace{2cm}\left. a_{11}^3x_1^2+a_{22}^3x_2^2+a_{33}^3x_3^2+2a_{12}^3x_1x_2+2a_{13}^3x_1x_3+2a_{23}^3x_2x_3\right),
\end{array}\label{symmforma}
$$
where $a_{ij}^{\ell}\in\RR$. Then for a fixed ${\mathbf x}\in\RR^3$, from (\ref{idents1}) we compute
$$
\begin{array}{l} 
A_{{\mathbf x}}(x)=\left(a_{11}^1{\mathbf x}_1x_1+a_{22}^1{\mathbf x}_2x_2+a_{33}^1{\mathbf x}_3x_3+
a_{12}^1{\mathbf x}_1x_2+a_{12}^1{\mathbf x}_2x_1+a_{13}^1{\mathbf x}_1x_3+\right.\\
\vspace{-0.3cm}\\
\hspace{0.8cm}\left.a_{13}^1{\mathbf x}_3x_1+a_{23}^1{\mathbf x}_2x_3+a_{23}^1{\mathbf x}_3x_2, a_{11}^2{\mathbf x}_1x_1+a_{22}^2{\mathbf x}_2x_2+a_{33}^2{\mathbf x}_3x_3+a_{12}^2{\mathbf x}_1x_2+\right.\\
\vspace{-0.3cm}\\
\hspace{0.8cm}\left.a_{12}^2{\mathbf x}_2x_1+a_{13}^2{\mathbf x}_1x_3+a_{13}^2{\mathbf x}_3x_1+a_{23}^2{\mathbf x}_2x_3+a_{23}^2{\mathbf x}_3x_2,
a_{11}^3{\mathbf x}_1x_1+a_{22}^3{\mathbf x}_2x_2+\right.\\
\vspace{-0.3cm}\\
\hspace{0.8cm}\left.a_{33}^3{\mathbf x}_3x_3+
a_{12}^3{\mathbf x}_1x_2+a_{12}^3{\mathbf x}_2x_1+a_{13}^3{\mathbf x}_1x_3+a_{13}^3{\mathbf x}_3x_1+a_{23}^3{\mathbf x}_2x_3+a_{23}^3{\mathbf x}_3x_2\right)=\\
\vspace{-0.3cm}\\
\hspace{0.8cm}\left((a_{11}^1{\mathbf x}_1+a_{12}^1{\mathbf x}_2+a_{13}^1{\mathbf x}_3)x_1+(a_{12}^1{\mathbf x}_1+a_{22}^1{\mathbf x}_2+a_{23}^1{\mathbf x}_3)x_2+(a_{13}^1{\mathbf x}_1+a_{23}^1{\mathbf x}_2+\right.\\
\vspace{-0.3cm}\\
\hspace{0.8cm}\left.a_{33}^1{\mathbf x}_3)x_3,
(a_{11}^2{\mathbf x}_1+a_{12}^2{\mathbf x}_2+a_{13}^2{\mathbf x}_3)x_1+(a_{12}^2{\mathbf x}_1+a_{22}^2{\mathbf x}_2+a_{23}^2{\mathbf x}_3)x_2+(a_{13}^2{\mathbf x}_1+\right.\\
\vspace{-0.3cm}\\
\hspace{0.8cm}\left.a_{23}^2{\mathbf x}_2+a_{33}^2{\mathbf x}_3)x_3,
(a_{11}^3{\mathbf x}_1+a_{12}^3{\mathbf x}_2+a_{13}^3{\mathbf x}_3)x_1+(a_{12}^3{\mathbf x}_1+a_{22}^3{\mathbf x}_2+a_{23}^3{\mathbf x}_3)x_2+\right.\\
\vspace{-0.3cm}\\
\hspace{0.8cm}\left.(a_{13}^3{\mathbf x}_1+a_{23}^3{\mathbf x}_2+a_{33}^3{\mathbf x}_3)x_3\right),
\end{array}
$$
where $x\in\RR^3$. By (\ref{alggomega3}), the condition that this map lies in ${\mathfrak g}(\Omega_3)$ for every ${\mathbf x}\in\RR^3$ is equivalent to the relations
\begin{equation}
\begin{array}{l}
a_{11}^1=a_{12}^2=a_{13}^3,\,\,a_{12}^1=a_{22}^2=a_{23}^3,\,\,a_{13}^1=a_{23}^2=a_{33}^3,\\
\vspace{-0.3cm}\\
a_{13}^2=-a_{12}^3,\,\,a_{23}^2=-a_{22}^3,\,\,a_{33}^2=-a_{23}^3,\,\,a_{13}^1=a_{11}^3,\\
\vspace{-0.3cm}\\
a_{23}^1=a_{12}^3,\,\,a_{33}^1=a_{13}^3,\,\,a_{12}^1=a_{11}^2,\,\,a_{22}^1=a_{12}^2,\,\,a_{23}^1=a_{13}^2.
\end{array}\label{relparama1}
\end{equation}

Next, let $b:\CC^3\times\CC^N\to\CC^N$ be a $\CC$-bilinear map with the $j$th component given by a matrix $(b_{i\ell}^j)$, $j,\ell=1,\dots,N$, $i=1,2,3$. For every fixed pair of vectors ${\mathbf w},{\mathbf w}'\in\CC^N$ we then compute
$$
\begin{array}{l}
\displaystyle\tilde{\mathcal H}({\mathbf w}',b(x,{\mathbf w}))=\left(x_1\cdot\sum_{j,\ell=1}^Nb_{1\ell}^j\bar{\mathbf w}_j'{\mathbf w}_{\ell}+x_2\cdot\sum_{j,\ell=1}^Nb_{2\ell}^j\bar{\mathbf w}_j'{\mathbf w}_{\ell}+x_3\cdot\sum_{j,\ell=1}^Nb_{3\ell}^j\bar{\mathbf w}_j'{\mathbf w}_{\ell},\right.\\
\vspace{-0.4cm}\\
\displaystyle\hspace{3cm}\left.x_1\cdot\sum_{j,\ell=1}^Nb_{1\ell}^j\bar{\mathbf w}_j'{\mathbf w}_{\ell}+x_2\cdot\sum_{j,\ell=1}^Nb_{2\ell}^j\bar{\mathbf w}_j'{\mathbf w}_{\ell}+x_3\cdot\sum_{j,\ell=1}^Nb_{3\ell}^j\bar{\mathbf w}_j'{\mathbf w}_{\ell},0\right).
\end{array}
$$
Then from (ii) of Theorem \ref{descrg1} we obtain
$$
\begin{array}{l}
\hspace{-0.1cm}\displaystyle B_{{\mathbf w},{\mathbf w}'}(x)\hspace{-0.1cm}=\hspace{-0.15cm}\left(x_1\cdot\sum_{j,\ell=1}^N\Im(b_{1\ell}^j\bar{\mathbf w}_j'{\mathbf w}_{\ell})+x_2\cdot\sum_{j,\ell=1}^N\Im(b_{2\ell}^j\bar{\mathbf w}_j'{\mathbf w}_{\ell})+x_3\cdot\sum_{j,\ell=1}^N\Im(b_{3\ell}^j\bar{\mathbf w}_j'{\mathbf w}_{\ell}),\right.\\
\vspace{-0.4cm}\\
\displaystyle\hspace{1cm}\left.x_1\cdot\sum_{j,\ell=1}^N\Im(b_{1\ell}^j\bar{\mathbf w}_j'{\mathbf w}_{\ell})+x_2\cdot\sum_{j,\ell=1}^N\Im(b_{2\ell}^j\bar{\mathbf w}_j'{\mathbf w}_{\ell})+x_3\cdot\sum_{j,\ell=1}^N\Im(b_{3\ell}^j\bar{\mathbf w}_j'{\mathbf w}_{\ell}),0\right).
\end{array}
$$
Now, the condition that this map lies in ${\mathfrak g}(\Omega_3)$ for all ${\mathbf w},{\mathbf w}'\in\CC^N$ is easily seen to be equivalent to $b=0$. Hence $B_{{\mathbf x}}=0$ for every ${\mathbf x}\in\RR^3$.

We will now utilize the requirement that $B_{{\mathbf x}}=0$ is associated to $A_{{\mathbf x}}$ with respect to ${\mathcal H}$ for every ${\mathbf x}\in\RR^3$ (see condition (i) in Theorem \ref{descrg1}). This requirement is immediately seen to be equivalent to the relations
$$
\begin{array}{l}
a_{12}^1=-a_{11}^1,\,\,a_{22}^1=-a_{12}^1,\,\,a_{23}^1=-a_{13}^1,\\
\vspace{-0.3cm}\\
a_{12}^2=-a_{11}^2,\,\,a_{22}^2=-a_{12}^2,\,\,a_{23}^2=-a_{13}^2,\\
\vspace{-0.3cm}\\
a_{12}^3=-a_{11}^3,\,\,a_{22}^3=-a_{12}^3,\,\,a_{23}^3=-a_{13}^3.\\
\end{array}
$$
Together with (\ref{relparama1}), these relations imply that each $a_{ij}^{\ell}$ is either zero or equal to $\pm a_{11}^1$ as follows:
$$
\begin{array}{l}
a_{22}^1=a_{11}^1,\,\,a_{33}^1=a_{11}^1,\,\,a_{12}^1=-a_{11}^1,\,\,a_{13}^1=0,\,\,\\
\vspace{-0.3cm}\\
a_{23}^1=0,\,\,a_{11}^2=-a_{11}^1,\,\,a_{22}^2=-a_{11}^1,\,\,a_{33}^2=a_{11}^1,\,\,\\
\vspace{-0.3cm}\\
a_{12}^2=a_{11}^1,\,\,a_{13}^2=0,\,\,a_{23}^2=0,\,\,a_{11}^3=0,\,\,a_{22}^3=0,\,\,\\
\vspace{-0.3cm}\\
a_{33}^3=0,\,\,a_{12}^3=0,\,\,a_{13}^3=a_{11}^1,\,\,a_{23}^3=-a_{11}^1.
\end{array}
$$
Therefore,
$$
a(x,x)=a_{11}^1((x_1-x_2)^2+x_3^2,-(x_1-x_2)^2+x_3^2,2(x_1-x_2)x_3).
$$
This shows that $\dim{\mathfrak g}_1=1$ as required. \end{proof}

Next, for $w\in\CC^N$ the proof of \cite[Lemma 3.6]{Isa7} yields
\begin{equation}
\dim G(\Omega_3,(||w||^2,||w||^2,0))=3.\label{dim3any}
\end{equation}
Returning to the case $N=2$, we thus see that if ${\mathcal H}(w,w)=(||w||^2,||w||^2,0)$, then for ${\mathfrak g}={\mathfrak g}(D_6)$ one has $\dim{\mathfrak g}_0=7$ (recall that $s=4$). Combining this fact with Lemmas \ref{g12d10} and \ref{g1d10} for $N=2$, we calculate 
$$
d(D_6)=\dim{\mathfrak g}_{-1}+\dim{\mathfrak g}_{-1/2}+\dim{\mathfrak g}_0+\dim{\mathfrak g}_{1/2}+\dim{\mathfrak g}_1=15,
$$
which significantly improves bound (\ref{essstimm10}) and even gives a precise value for $d(D_6)$.
\vspace{0.1cm}

{\bf Case (4).} Suppose that $k=4$, $n=4$. In this case, after a linear change of variables $S(\Omega,H)$ turns into one of the domains
\begin{equation}
\begin{array}{l}
\left\{z\in\CC^4: \Im z\in\Omega_4\right\},\\
\vspace{-0.1cm}\\
\left\{z\in\CC^4: \Im z\in\Omega_5\right\},\\
\vspace{-0.1cm}\\
\left\{z\in\CC^4: \Im z\in\Omega_6\right\}
\end{array}\label{threetubedomains}
\end{equation}
and therefore is biholomorphic either to $(B^1)^4=B^1\times B^1\times B^1\times B^1$, or to $B^1\times T_3$, or to $T_4$, where $T_3$ and $T_4$ are the tube domains defined in (\ref{domaint3}), (\ref{domaint4}). The dimensions of the automorphism groups of these domains are 12, 13, 15, respectively. Noting that $12=n^2-4$, we see that $S(\Omega,H)$ is in fact biholomorphic to the product $(B^1)^4$, so Case (4) only contributes $(B^1)^4$ to the classification of homogeneous hyperbolic $n$-dimensional manifolds with automorphism group dimension $n^2-4$. 
\vspace{0.1cm}

{\bf Case (5).} Suppose that $k=5$, $n=5$. In this situation inequality (\ref{estim2}) implies $\dim {\mathfrak g}(\Omega)\ge 11=k^2/2-k/2+1$, which by Lemma \ref{ourlemma} yields that $\Omega$ is linearly equivalent to the circular cone $C_5$. Hence, after a linear change of variables $S(\Omega,H)$ turns into the domain
$$
\left\{z\in\CC^5: \Im z\in C_5\right\},
$$
which is the tube domain $T_5$ defined in (\ref{domaint5}). Notice that $d(T_5)=21=n^2-4$, so Case (5) contributes $T_5$ to our classification.

The proof of Theorem \ref{main} is now complete.\qed

\section{Proof of Theorem \ref{main1}}\label{proof1}
\setcounter{equation}{0}

As before, we use the fact that, by \cite{VGP-S}, \cite{N2}, the manifold $M$ is biholomorphic to an affinely homogeneous Siegel domain of the second kind $S(\Omega,H)$. Since one has $n^2-5\ge 2n$, it follows that $n\ge 4$. Also, as $M$ is not biholomorphic to $B^n$, we have $k\ge 2$. The following elementary lemma is analogous to Lemma \ref{n5k3}, and we state it without proof.

\begin{lemma}\label{n5k31} \it For $n\ge 7$ one cannot have $k\ge 3$ and for $n=6$ one cannot have $k=4,5,6$. 
\end{lemma}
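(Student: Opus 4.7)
The plan is to mimic the argument used for Lemma \ref{n5k3}, replacing the threshold $n^2-4$ by $n^2-5$. Specifically, from the universal upper bound (\ref{estim4}), if $d(S(\Omega,H))=n^2-5$ then we must have
\[
\frac{3k^2}{2}-\left(2n+\frac{5}{2}\right)k+n^2+4n+1\;\ge\;n^2-5,
\]
i.e., the quadratic
\[
\psi(t):=\frac{3t^2}{2}-\left(2n+\frac{5}{2}\right)t+4n+6
\]
must satisfy $\psi(k)\ge 0$. So it suffices to show that $\psi(k)<0$ for every pair $(n,k)$ excluded by the lemma, namely all $k\ge 3$ when $n\ge 7$, and $k\in\{4,5,6\}$ when $n=6$.

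For the case $n\ge 7$, $k\ge 3$, I would compute the discriminant of $\psi$, which is
\[
\mathcal{D}':=\left(2n+\tfrac{5}{2}\right)^2-6(4n+6)=4n^2-14n-\tfrac{119}{4},
\]
and verify it is positive for $n\ge 7$, so $\psi$ has two real zeros $t_1'<t_2'$. To get $\psi(k)<0$ it is then enough to establish $t_1'<3\le k\le n<t_2'$. Squaring, the inequality $t_2'>n$ reduces to $3n^2-9n-36>0$ (i.e.\ $n^2-3n-12>0$), which is clear for $n\ge 7$; similarly $t_1'<3$ reduces to $n>6$, also clear for $n\ge 7$. These are elementary verifications entirely parallel to what was done for $\varphi$ in the proof of Lemma \ref{n5k3}.

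For the remaining case $n=6$, $k\in\{4,5,6\}$, the roots of $\psi$ are no longer favorable (in particular $t_1'<3$ fails), so the general argument does not apply to $k=3$; but the lemma deliberately excludes $k=3$ in this case. So I would just plug in the three values directly: $\psi(4)=24-58+30=-4$, $\psi(5)=37.5-72.5+30=-5$, $\psi(6)=54-87+30=-3$, all negative, which gives the desired contradiction.

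No step of this is really an obstacle — the proof is a short quadratic analysis plus three arithmetic checks — and I would simply note that, since the situation is completely analogous to Lemma \ref{n5k3} (just with the constant $5$ replaced by $6$ inside $\psi$), the details can be omitted, as the paper does by stating the lemma without proof.
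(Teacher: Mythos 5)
Your proposal is correct and follows exactly the route the paper intends: the paper omits the proof of this lemma, stating only that it is analogous to Lemma \ref{n5k3}, and your argument is precisely that analogue (replacing the constant $4n+5$ by $4n+6$ in the quadratic, checking the discriminant and root locations for $n\ge 7$, and verifying $k=4,5,6$ for $n=6$ by direct substitution). All the arithmetic checks out, including the observation that $\psi(3)=0$ when $n=6$, which is why $k=3$ must be excluded from that case.
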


By Lemma \ref{n5k31}, in order to establish the theorem, we need to consider the following seven cases: (1) $k=2$, $n\ge 4$, (2) $k=3$, $n=4$, (3) $k=3$, $n=5$, (4) $k=3$, $n=6$, (5) $k=4$, $n=4$, (6) $k=4$, $n=5$, (7) $k=5$, $n=5$. 
\vspace{-0.1cm}\\

{\bf Case (1).} Suppose that $k=2$, $n\ge 4$. This situation is treated analogously to Case (1) considered in Section \ref{proof}. Indeed, $S(\Omega,H)$ cannot be equivalent to the domain $D_1$ defined in (\ref{domaindd1}) because $d(D_1)=n^2+2>n^2-5$ and cannot be equivalent to the domain $D_2$ introduced in (\ref{domaindd2}) since $D_2$ is not homogeneous.

Next, inequality (\ref{estim2}) yields $s\ge n^2-4n-3$, hence by (\ref{estim7}) it follows, as before, that $1\le m\le 3$, which leads us to considering the same four subcases (a)--(d) as in Section \ref{proof}. Each of them is easily seen to make no contributions to our\linebreak classification.
\vspace{0.1cm}

{\bf Case (2).} Suppose that $k=3$, $n=4$. We deal with this situation analogously to Case (2) considered in Section \ref{proof} and, as $d(D_4)=10<11=n^2-5$ for $v\in\partial\Omega_3\setminus\{0\}$, immediately see that this case contributes nothing to the classification either.
\vspace{0.1cm}

{\bf Case (3).} Suppose that $k=3$, $n=5$. We will approach this situation analogously to Case (3) considered in Section \ref{proof}. As no product of three unit balls in $\CC^5$ has automorphism group dimension $20=n^2-5$, the domain $S(\Omega,H)$ must be equivalent to the domain $D_6$ defined in (\ref{domaind10}). By Lemma \ref{dimsubspaceskewherm}, we have either $s=1$, or $s=2$, or $s=4$. If $s=1$, then, recalling that $\dim{\mathfrak g}(\Omega_3)=4$, by inequality (\ref{estim2}) we see $d(D_6)\le 19<20=n^2-5$, so in fact we have $s>1$. The case $s=4$ is excluded by arguing as in Section \ref{proof} and observing that in this situation $d(D_6)=15<20=n^2-5$ for $v\in\partial\Omega_3\setminus\{0\}$. Hence we have $s=2$. 

Let ${\mathcal H}=({\mathcal H}_1,{\mathcal H}_2,{\mathcal H}_3)$ and prove

\begin{lemma}\label{simultdiag} \it The forms ${\mathcal H}_1, {\mathcal H}_2, {\mathcal H}_3$ can be simultaneously diagonalized by a linear change of $w$-variables.
\end{lemma}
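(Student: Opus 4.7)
The plan is to exploit the assumption $s=2$ to produce a nonzero traceless skew-Hermitian matrix that commutes with all three component Hermitian forms simultaneously, and then diagonalize in its eigenbasis.

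First, I would pick a positive-definite real linear combination ${\mathbf H}=c_1{\mathcal H}_1+c_2{\mathcal H}_2+c_3{\mathcal H}_3$ (which exists since ${\mathcal H}$ is $\Omega_3$-Hermitian) and perform a linear change of $w$-variables so that ${\mathbf H}(w,w)=\|w\|^2$. Denote by $M_j$ the Hermitian matrix of ${\mathcal H}_j$ in the new basis. The defining relation ${\mathcal H}_j(Bw,w')+{\mathcal H}_j(w,Bw')=0$ then reads $B^*M_j+M_jB=0$; taking the linear combination with the coefficients $c_j$ shows that any $B\in{\mathcal L}$ automatically satisfies $B^*+B=0$, i.e.\ is skew-Hermitian in the ordinary sense. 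Plugging $B^*=-B$ back in yields $[M_j,B]=0$ for $j=1,2,3$, so ${\mathcal L}$ is precisely the real subspace of $\mathfrak{u}(2)$ consisting of skew-Hermitian matrices commuting with all three $M_j$.

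Second, observe that $iI$ lies in ${\mathcal L}$ for free, so the hypothesis $s=2$ guarantees the existence of some $B\in{\mathcal L}\setminus\RR\cdot iI$. Its traceless part $B_0:=B-\tfrac{1}{2}(\tr B)I$ is a nonzero element of $\mathfrak{su}(2)$, and since $iI$ commutes with every matrix, $B_0$ itself still commutes with each $M_j$. Any nonzero traceless skew-Hermitian $2\times 2$ matrix has two distinct purely imaginary eigenvalues, so there is a unitary $U$ diagonalizing $B_0$. Since any matrix commuting with a diagonal matrix with distinct diagonal entries is itself diagonal, each $U^*M_jU$ is diagonal. Because $U$ is unitary, the change $w\mapsto Uw$ preserves ${\mathbf H}$, and the composite linear change of $w$-variables simultaneously diagonalizes ${\mathcal H}_1,{\mathcal H}_2,{\mathcal H}_3$, as desired.

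The only delicate point is the very first step: one must verify that every $B\in{\mathcal L}$ is automatically skew-Hermitian in the ordinary sense (not merely skew-Hermitian with respect to each ${\mathcal H}_j$), which uses crucially that ${\mathbf H}$ is a \emph{real} linear combination of the ${\mathcal H}_j$. After that reduction the argument is pure linear algebra inside the four-dimensional Lie algebra $\mathfrak{u}(2)=\RR\cdot iI\oplus\mathfrak{su}(2)$, and the assumption $s=2$ (rather than $s=1$) is exactly what provides the non-scalar commuting element needed to carry out the diagonalization.
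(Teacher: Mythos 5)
Your proof is correct, and it rests on the same mechanism as the paper's: the hypothesis $s=2$ forces ${\mathcal L}$ to contain an element that is not a real multiple of $iI$, and (after normalizing a positive-definite combination ${\mathbf H}$ to $\|w\|^2$, so that every element of ${\mathcal L}$ is genuinely skew-Hermitian and commutes with each matrix $M_j$) diagonalizing that element in a unitary eigenbasis diagonalizes all three forms at once. The only real difference is organizational: the paper first arranges for ${\mathcal H}_1$ to be positive-definite, simultaneously diagonalizes ${\mathcal H}_1,{\mathcal H}_2$ by the spectral theorem, and then splits into the cases $\lambda_1=\lambda_2$ (where a unitary diagonalization of ${\mathcal H}_3$ finishes) and $\lambda_1\ne\lambda_2$ (where $s=2$ forces every matrix $\mathrm{diag}(ia,ib)$ to be skew-Hermitian with respect to ${\mathcal H}_3$, so ${\mathcal H}_3$ is already diagonal), whereas your argument is uniform and avoids the case split; both uses of $s=2$ are equivalent in substance.
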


\begin{proof} Replacing $\Omega_3$ by a linearly equivalent cone, we may assume that ${\mathcal H}_1$ is positive-definite. Further, by applying a linear change of the $w$-variables, we can simultaneously diagonalize ${\mathcal H}_1$, ${\mathcal H}_2$ as
$$
{\mathcal H}_1(w,w)=||w||^2,\,\,\, {\mathcal H}_2(w,w)=\lambda_1|w_1|^2+\lambda_2|w_2|^2.
$$
If $\lambda_1=\lambda_2$, we can also diagonalize ${\mathcal H}_3$ by a unitary transformation, which establishes the lemma in this case.

Assume now that $\lambda_1\ne\lambda_2$. In this situation, every Hermitian matrix $B$ that is also Hermitian with respect to ${\mathcal H}_2$ is diagonal, i.e.,
$$
B=\left(\begin{array}{ll}
ia & 0\\
0 & ib
\end{array}
\right),\,\, a,b,\in\RR.
$$ 
Since $s=2$, every such matrix is also Hermitian with respect to ${\mathcal H}_3$. This immediately implies that ${\mathcal H}_3$ is diagonal. \end{proof}

By Lemma \ref{simultdiag} we have
$$
{\mathcal H}(w,w)=u|w_1|^2+v|w_2|^2,
$$
where $u=(u_1,u_2,u_3)$, $v=(v_1,v_2,v_3)$ are vectors in $\RR^3$ satisfying $u_1^2\ge u_2^2+u_3^2$, $v_1^2\ge v_2^2+v_3^2$, $u_1>0$, $v_1>0$. We will now prove that $\dim G(\Omega_3,{\mathcal H})\le 2$ by studying the Lie algebra of $G(\Omega_3,{\mathcal H})$, which we momentarily denote  by $\mathfrak{h}$. Recall that ${\mathfrak h}$ consists of all matrices $A\in\mathfrak{g}(\Omega_3)$ satisfying (\ref{assoc1}) for some $B\in\mathfrak{gl}_2(\CC)$. 

Suppose first that $u_1^2> u_2^2+u_3^2$, i.e., $u\in\Omega_3$. Since $\Omega_3$ is homogeneous, we may assume that $u=(1,0,0)$, i.e.,
$$
{\mathcal H}(w,w)=(|w_1|^2+v_1|w_2|^2,v_2|w_2|^2,v_3|w_2|^2).
$$ 
Recalling that $\mathfrak{g}(\Omega_3)$ is given by formula (\ref{alggomega3}), we immediately see that for $A\in\mathfrak{h}$ one must have $p=0$, $q=0$. Therefore, $\dim G(\Omega_3,{\mathcal H}) \le 2$. The same conclusion holds if $v_1^2> v_2^2+v_3^2$, i.e., if $v\in\Omega_3$

Next, let $u_1=\sqrt{u_2^2+u_3^2}\ne 0$ and $v_1=\sqrt{v_2^2+v_3^2}\ne 0$, i.e., $u,v\in\partial\Omega_3\setminus\{0\}$. Since the group $G(\Omega_3)^{\circ}=\RR_{+}\times\SO_{1,2}^{\circ}$ acts transitively on $\partial\Omega_3\setminus\{0\}$, we can assume that $u=(1,1,0)$, i.e.,
$$
{\mathcal H}(w,w)=(|w_1|^2+v_1|w_2|^2,|w_1|^2+v_2|w_2|^2,v_3|w_2|^2).
$$
Suppose now that a matrix
$$
A=\left(\begin{array}{lrl}
\lambda & p & q\\
p & \lambda & r\\
q & -r & \lambda
\end{array}
\right)\in{\mathfrak g}(\Omega_3)\label{matrixAomega3}
$$
lies in ${\mathfrak h}$, i.e., that for some $b_{ij}\in\CC$, $i,j=1,2$ the following holds:
$$
\begin{array}{l}
\lambda(|w_1|^2+v_1|w_2|^2)+p(|w_1|^2+v_2|w_2|^2)+qv_3|w_2|^2=\\
\vspace{-0.1cm}\\
\hspace{4cm}2\Re\Bigl((b_{11}w_1+b_{12}w_2)\bar w_1+v_1(b_{21}w_1+b_{22}w_2)\bar w_2\Bigr),\\
\vspace{-0.1cm}\\
p(|w_1|^2+v_1|w_2|^2)+\lambda(|w_1|^2+v_2|w_2|^2)+rv_3|w_2|^2=\\
\vspace{-0.1cm}\\
\hspace{4cm}2\Re\Bigl((b_{11}w_1+b_{12}w_2)\bar w_1+v_2(b_{21}w_1+b_{22}w_2)\bar w_2\Bigr),\\
\vspace{-0.1cm}\\
q(|w_1|^2+v_1|w_2|^2)-r(|w_1|^2+v_2|w_2|^2)+\lambda v_3|w_2|^2=\\
\vspace{-0.1cm}\\
\hspace{4cm}2\Re\Bigl(v_3(b_{21}w_1+b_{22}w_2)\bar w_2\Bigr).
\end{array}
$$
These conditions, in particular, imply
\begin{equation}
\begin{array}{l}
\lambda v_1+pv_2+qv_3=2 v_1\Re b_{22},\,\,p v_1+\lambda v_2+rv_3=2 v_2\Re b_{22},\\
\vspace{-0.3cm}\\
q=r,\,\, q v_1-rv_2+\lambda v_3=2 v_3\Re b_{22}.
\end{array}\label{basicids}
\end{equation}

If $v_3\ne 0$, the first three identities in (\ref{basicids}) yield
$$
q=r,\,\,p=\frac{v_2-v_1}{v_3}r,
$$
hence $\dim G(\Omega_3,{\mathcal H}) \le 2$. If $v_3=0$, then $v_2=\pm v_1$ and, since $s=2$, we in fact have $v_2=-v_1$. The last two identities in (\ref{basicids}) then immediately imply $q=r=0$. Hence, again, $\dim G(\Omega_3,{\mathcal H}) \le 2$ as required.

We now conclude that the action of $\dim G(\Omega_3,{\mathcal H})$ on $\Omega_3$ is not transitive, so Case (3) contributes nothing to the sought-after classification.
\vspace{0.1cm}

{\bf Case (4).} Suppose that $k=3$, $n=6$. By (\ref{estim2}) we have $s+\dim{\mathfrak g}(\Omega)\ge 13$. On the other hand, $s\le 9$ by (\ref{ests}). Since $\dim{\mathfrak g}(\Omega_2)=3$, $\dim{\mathfrak g}(\Omega_3)=4$, it follows that $\Omega$ is linearly equivalent to $\Omega_3$ and $s=9$. In particular, $S(\Omega,H)$ is linearly equivalent to the domain
\begin{equation}
D_7:=\left\{(z,w)\in\CC^3\times\CC^3: \Im z-{\mathcal H}(w,w)\in\Omega_3\right\},\label{domaind7new}
\end{equation}
where ${\mathcal H}$ is an $\Omega_3$-Hermitian form. 

We will now proceed as in Case (3) considered in Section \ref{proof}. Let ${\mathcal H}=({\mathcal H}_1,{\mathcal H}_2,{\mathcal H}_3)$ and ${\mathbf H}$ be a positive-definite linear combination of ${\mathcal H}_1$, ${\mathcal H}_2$, ${\mathcal H}_3$. By applying a linear change of the $w$-variables, we can diagonalize ${\mathbf H}$ as ${\mathbf H}(w,w)=||w||^2$. Since $s=9$, by Lemma \ref{dimsubspaceskewherm} each of the $\CC$-valued Hermitian forms ${\mathcal H}_1$, ${\mathcal H}_2$, ${\mathcal H}_3$ is proportional to ${\mathbf H}$. This shows that ${\mathcal H}(w,w)=v||w||^2$, where $v=(v_1,v_2,v_3)$ is a vector in $\RR^3$ satisfying $v_1^2\ge v_2^2+v_3^2$, $v_1>0$. 

Observe that $v$ is an eigenvector of every element of $G(\Omega_3,v||w||^2)$. Then, if\linebreak $v_1^2>v_2^2+v_3^2$, it follows that the action of $G(\Omega_3,v||w||^2)$ on $\Omega_3$ is not transitive. Therefore, $v_1=\sqrt{v_2^2+v_3^2}\ne 0$, i.e., $v\in\partial\Omega_3\setminus\{0\}$. As the connected group $G(\Omega_3)^{\circ}=\RR_{+}\times\SO_{1,2}^{\circ}$ acts transitively on $\partial\Omega_3\setminus\{0\}$, we can suppose that $v=(1,1,0)$, i.e., ${\mathcal H}(w,w)=(||w||^2,||w||^2,0)$. In this case the domain $D_7$ coincides with the domain $\tilde D_6$ for $N=3$ (see (\ref{domaintilded10})). Thus, by Lemmas \ref{g12d10} and \ref{g1d10} we see that for ${\mathfrak g}={\mathfrak g}(D_7)$ one has ${\mathfrak g}_{1/2}=0$ and $\dim{\mathfrak g}_1=1$. Furthermore, by (\ref{dim3any}) we have $\dim{\mathfrak g}_0=12$ (recall that $s=9$). Combining these facts together, we calculate
$$
d(D_7)=\dim{\mathfrak g}_{-1}+\dim{\mathfrak g}_{-1/2}+\dim{\mathfrak g}_0+\dim{\mathfrak g}_{1/2}+\dim{\mathfrak g}_1=22<31=n^2-5.
$$
This shows that $S(\Omega,H)$ cannot in fact be equivalent to $D_7$, so Case (4) contributes nothing to the classification of homogeneous hyperbolic $n$-dimensional manifolds with automorphism group dimension $n^2-5$.
\vspace{0.1cm}

{\bf Case (5).} Suppose that $k=4$, $n=4$. We deal with this situation analogously to Case (4) considered in Section \ref{proof} and observe that it contributes nothing to our classification. Indeed, for the three tube domains in (\ref{threetubedomains}) the automorphism group dimensions are 12, 13, 15, respectively, and each of these numbers is greater than $11=n^2-5$.
\vspace{0.1cm}

{\bf Case (6).} Suppose that $k=4$, $n=5$. In this situation inequality (\ref{estim2}) implies $\dim {\mathfrak g}(\Omega)\ge 7=k^2/2-k/2+1$, hence it follows, e.g., by Lemma \ref{ourlemma}, that the cone $\Omega$ is linearly equivalent to the circular cone $C_4=\Omega_6$. Therefore, $S(\Omega,H)$ is linearly equivalent to
\begin{equation}
D_8:=\left\{(z,w)\in\times\CC^4\times\CC:\Im z-v|w|^2\in\Omega_6\right\},\label{domaind12}
\end{equation}
where $v=(v_1,v_2,v_3,v_4)$ is a vector in $\RR^4$ satisfying $v_1^2\ge v_2^2+v_3^2+v_4^2$, $v_1>0$. Assume first that $v_1^2>v_2^2+v_3^2+v_4^2$, i.e., that $v\in\Omega_6$. As the vector $v$ is an eigenvector of every element of $G(\Omega_6,v|w|^2)$, it then follows that $G(\Omega_6,v|w|^2)$ does not act transitively on $\Omega_6$. Therefore, we have $v_1=\sqrt{v_2^2+v_3^2+v_4^2}\ne 0$, i.e., $v\in\partial\Omega_6\setminus\{0\}$. Since group $G(\Omega_6)^{\circ}=\RR_{+}\times\SO_{1,3}^{\circ}$ acts transitively on $\partial\Omega_6\setminus\{0\}$, we can suppose that $v=(1,1,0,0)$, i.e., $v|w|^2=(|w|^2,|w|^2,0,0)$. We will now prove an analogue of Lemma \ref{g12d10}.

Set
$$
\hat D_8:=\left\{(z,w)\in\times\CC^4\times\CC^N:\Im z-\hat{\mathcal H}(w,w)\in\Omega_6\right\},\label{domaintilded8}
$$
where $N\ge 1$ and $\hat{\mathcal H}$ is the $\Omega_6$-Hermitian form analogous to the one introduced in (\ref{hermform1}):
$$
\hat{\mathcal H}(w,w'):=\left(\sum_{j=1}^N\bar w_jw_j',\sum_{j=1}^N\bar w_jw_j',0,0\right).\label{hermform1hat}
$$

\begin{lemma}\label{g12d101tilde}\it For ${\mathfrak g}={\mathfrak g}(\hat D_8)$ one has ${\mathfrak g}_{1/2}=0$.
\end{lemma}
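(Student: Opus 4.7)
The plan is to apply Theorem \ref{descrg1/2} with the cone $\Omega_6$ and the $\Omega_6$-Hermitian form $\hat{\mathcal H}$, mimicking the strategy of Lemma \ref{g12d10} but with the larger cone $C_4$ replacing $C_3$. Parameterize a $\CC$-linear map $\Phi\co \CC^4\to\CC^N$ by a matrix $(\varphi^j_i)$ with $j=1,\dots,N$, $i=1,2,3,4$, fix ${\mathbf w}\in\CC^N$, and compute $\hat{\mathcal H}({\mathbf w},\Phi(x))$ for $x\in\RR^4$. Because $\hat{\mathcal H}$ has its first two components equal and its last two components zero, the resulting vector in $\CC^4$ has the same structure. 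Taking imaginary parts, one reads off that $\Phi_{\mathbf w}\co\RR^4\to\RR^4$, as a matrix, has its first two rows identical (with common entry $\alpha_i:=\sum_{j=1}^N\Im(\bar {\mathbf w}_j\varphi^j_i)$ in column $i$) and its last two rows identically zero.

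Next, match this against the explicit description of ${\mathfrak g}(\Omega_6)={\mathfrak c}({\mathfrak{gl}}_4(\RR))\oplus{\mathfrak o}_{1,3}$: a general element reads
$$
A=\left(\begin{array}{cccc}
\lambda & p & q & r\\
p & \lambda & s & t\\
q & -s & \lambda & u\\
r & -t & -u & \lambda
\end{array}\right),\qquad \lambda,p,q,r,s,t,u\in\RR.
$$
The vanishing of rows $3$ and $4$ forces $\lambda=q=r=s=t=u=0$, reducing $A$ to the off-diagonal matrix with $p$ in positions $(1,2)$ and $(2,1)$. But the equality of rows $1$ and $2$ of $\Phi_{\mathbf w}$ then forces $p=0$ as well, so $\alpha_i=0$ for every $i=1,2,3,4$. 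Since this must hold for every ${\mathbf w}\in\CC^N$, the coefficients $\varphi^j_i$ all vanish and $\Phi=0$.

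Finally, feed $\Phi=0$ into condition (\ref{cond1}) to obtain $\hat{\mathcal H}(w,c(w',w'))=0$ for all $w,w'\in\CC^{N}$. The first component of $\hat{\mathcal H}$ is the standard positive-definite Hermitian form $\sum\bar w_jw_j'$, so fixing $w'$ and letting $w$ vary yields $c(w',w')=0$ for every $w'$; by polarization $c\equiv 0$. Theorem \ref{descrg1/2} then gives ${\mathfrak g}_{1/2}=0$, as claimed. The only step requiring any care is the bookkeeping that translates the off-diagonal structure of ${\mathfrak o}_{1,3}$ into the constraints on $\alpha_i$; everything else is routine, and the fact that rows $3$ and $4$ of $\Phi_{\mathbf w}$ vanish is what makes the argument succeed just as cleanly here as in the three-dimensional case of Lemma \ref{g12d10}.
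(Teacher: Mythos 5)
Your proposal is correct and follows essentially the same route as the paper: apply Theorem \ref{descrg1/2} to $\Omega_6$ and $\hat{\mathcal H}$, observe that $\Phi_{\mathbf w}$ has equal first two rows and vanishing last two rows, match against the explicit form of ${\mathfrak g}(\Omega_6)$ to force $\sum_j\Im(\bar{\mathbf w}_j\varphi^j_i)\equiv 0$ and hence $\Phi=0$, and then conclude via condition (\ref{cond1}). Your write-up merely makes explicit the matrix bookkeeping and the final polarization step for $c$ that the paper leaves as "clear."
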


\begin{proof} 

We will apply Theorem \ref{descrg1/2} to the cone $\Omega_6$ and the $\Omega_6$-Hermitian form $\hat{\mathcal H}$. Let $\Phi:\CC^4\to\CC^N$ be a $\CC$-linear map given by a matrix $(\varphi^j_i)$, with $j=1,\dots,N$, $i=1,2,3,4$. Fixing ${\mathbf w}\in\CC^N$, for $x\in\RR^4$ we compute
$$
\begin{array}{l}
\hspace{-0.1cm}\displaystyle{\mathcal H}({\mathbf w},\Phi(x))=\left(\sum_{j=1}^N\bar {\mathbf w}_j(\varphi^j_1x_1+\varphi^j_2x_2+\varphi^j_3x_3+\varphi^j_4x_4),\right.\\
\vspace{-0.5cm}\\
\hspace{5.5cm}\displaystyle\left.\sum_{j=1}^N\bar {\mathbf w}_j(\varphi^j_1x_1+\varphi^j_2x_2+\varphi^j_3x_3+\varphi^j_4x_4),0,0\right)=\\
\vspace{-0.3cm}\\
\displaystyle\left(x_1\cdot\sum_{j=1}^N\bar {\mathbf w}_j\varphi^j_1+x_2\cdot\sum_{j=1}^N\bar {\mathbf w}_j\varphi^j_2+x_3\cdot\sum_{j=1}^N\bar {\mathbf w}_j\varphi^j_3+x_4\cdot\sum_{j=1}^N\bar {\mathbf w}_j\varphi^j_4,\right.\\
\vspace{-0.3cm}\\
\hspace{2.5cm}\displaystyle\left.x_1\cdot\sum_{j=1}^N\bar {\mathbf w}_j\varphi^j_1+x_2\cdot\sum_{j=1}^N\bar {\mathbf w}_j\varphi^j_2+x_3\cdot\sum_{j=1}^N\bar {\mathbf w}_j\varphi^j_3+x_4\cdot\sum_{j=1}^N\bar {\mathbf w}_j\varphi^j_4,0,0\right).
\end{array}
$$
Then from formula (\ref{Phiw0}) we see
$$
\begin{array}{l}
\displaystyle\Phi_{{\mathbf w}}(x)=\left(x_1\cdot\sum_{j=1}^N\Im(\bar {\mathbf w}_j\varphi^j_1)+x_2\cdot\sum_{j=1}^N\Im(\bar {\mathbf w}_j\varphi^j_2)+x_3\cdot\sum_{j=1}^N\Im(\bar {\mathbf w}_j\varphi^j_3)+\right.\\
\vspace{-0.3cm}\\
\hspace{1.8cm}\displaystyle\left.x_4\cdot\sum_{j=1}^N\Im(\bar {\mathbf w}_j\varphi^j_4),\,\,x_1\cdot\sum_{j=1}^N\Im(\bar {\mathbf w}_j\varphi^j_1)+x_2\cdot\sum_{j=1}^N\Im(\bar {\mathbf w}_j\varphi^j_2)+\right.\\
\vspace{-0.3cm}\\
\hspace{4cm}\displaystyle\left. x_3\cdot\sum_{j=1}^N\Im(\bar {\mathbf w}_j\varphi^j_3)+x_4\cdot\sum_{j=1}^N\Im(\bar {\mathbf w}_j\varphi^j_4),0,0\right).
\end{array}
$$

Recall now that
\begin{equation}
{\mathfrak g}(\Omega_6)={\mathfrak c}({\mathfrak{gl}}_4(\RR))\oplus{\mathfrak o}_{1,3}=\left\{
\left(\begin{array}{lrrl}
\lambda & p & q & r\\
p & \lambda & s & t\\
q & -s & \lambda & y\\
r & -t & -y & \lambda\\
\end{array}
\right),\,\,\,\lambda,p,q,r,s,t,y\in\RR\right\}.\label{alggomega31}
\end{equation}
It is then clear that the condition that $\Phi_{{\mathbf w}}$ lies in ${\mathfrak g}(\Omega_4)$ for every ${\mathbf w}\in\CC$ leads to the relations
$$
\sum_{j=1}^N\Im(\bar {\mathbf w}_j\varphi_i^j)\equiv 0,\,\,i=1,2,3,4,
$$
which yield $\Phi=0$. By formula (\ref{cond1}) we then see that ${\mathfrak g}_{1/2}=0$ as required. \end{proof}

It follows from estimate (\ref{estim 8}), the second inequality in (\ref{estimm}), and Lemma \ref{g12d101tilde} for $N=1$ that for $v=(1,1,0,0)$ we have
\begin{equation}
d(D_8)\le 18<20=n^2-5\label{d14estim}
\end{equation}
(here $s=1$ and $\dim{\mathfrak g}(\Omega_6)=7$). This shows that Case (6) contributes nothing to our classification.

\begin{remark}\label{sameestim}
Estimate (\ref{d14estim}) can be also obtained by proving that for $v=(1,1,0,0)$ one has $\dim G(\Omega_6,v|w|^2)=5$. The proof is analogous to that of \cite[Lemma 3.6]{Isa7} but uses (\ref{alggomega31}) instead of (\ref{alggomega3}). Therefore, if $v=(1,1,0,0)$, for the algebra $\mathfrak{g}=\mathfrak{g}(D_8)$ we have $\dim\mathfrak{g}_0=6$, which, combined with the second inequality in (\ref{estimm}) and Lemma \ref{g12d101tilde} for $N=1$, improves bound (\ref{d14estim}) to
$$
d(D_8)=\dim{\mathfrak g}_{-1}+\dim{\mathfrak g}_{-1/2}+\dim{\mathfrak g}_0+\dim{\mathfrak g}_{1/2}+\dim{\mathfrak g}_1\le 16.
$$
\end{remark}
\vspace{0.1cm}

{\bf Case (7).} Suppose that $k=5$, $n=5$. In this situation inequality (\ref{estim2}) implies $\dim {\mathfrak g}(\Omega)\ge 10$, which by Lemma \ref{ourlemma} yields that $\Omega$ is linearly equivalent to the circular cone $C_5$. Hence, after a linear change of variables, $S(\Omega,H)$ turns into the domain $T_5$ defined in (\ref{domaint5}). However, $d(T_5)=21>20=n^2-5$, so this case makes no contributions to the classification of homogeneous hyperbolic $n$-dimensional manifolds with automorphism group dimension $n^2-5$.

The proof of Theorem \ref{main1} is complete.

\section{Proof of Theorem \ref{main2}}\label{proof2}
\setcounter{equation}{0}

As before, we utilize the fact that, by \cite{VGP-S}, \cite{N2}, the manifold $M$ is biholomorphic to an affinely homogeneous Siegel domain of the second kind $S(\Omega,H)$. Since one has $n^2-6\ge 2n$, it follows that $n\ge 4$. Also, as $M$ is not biholomorphic to $B^n$, we have $k\ge 2$. Now, it is not hard to see that Lemma \ref{n5k31} holds in this situation as well, which again leads to the seven cases stated in Section \ref{proof1}. 

{\bf Case (1).} The values $m=0,1,2,3$ are treated as before and yield no domains. However, this time estimate (\ref{estim2}) implies $s\ge n^2-4n-4$, which also allows for $m=4$. This possibility leads to two additional subcases: (e) where $n=6$ with $\lambda_1=\lambda_2\ne\lambda_3=\lambda_4$, and (f) where $n=7$ with $\lambda_1\ne\lambda_2=\lambda_3=\lambda_4=\lambda_5$. Case (1) is then easily seen to contribute the products $B^3\times B^3$ and $B^2\times B^5$ to the classification, which arise from subcases (e) and (f), respectively, with $d(B^3\times B^3)=30=n^2-6$, $d(B^2\times B^5)=43=n^2-6$.
\vspace{0.1cm}  

{\bf Case (2).} It is not hard to observe that this case only contributes to our classification the domain $D_4$ with $v=(1,1,0)$ (see (\ref{domaind8})), which is exactly the domain ${\mathcal D}$ defined in (\ref{psmathcald}). As we have already mentioned, ${\mathcal D}$ is linearly equivalent to the well-known example of a bounded non-symmetric homogeneous domain in $\CC^4$ given by I.~Pyatetskii-Shapiro (see \cite[pp.~26--28]{P-S}). Here $d({\mathcal D})=10=n^2-6$.
\vspace{0.1cm}

{\bf Case (3).} Here the domain $D_5$ defined in (\ref{domaind9}) leads to the product of unit balls $B^1\times B^2\times B^2$ with $d(B^1\times B^2\times B^2)=19=n^2-6$. 

Further, it is clear from the analysis given in Section \ref{proof1} that for the domain $D_6$ defined in (\ref{domaind10}) we only need to study the situation when $s=1$. We will show:

\begin{lemma}\label{oneofthemissmall} \it
For the domain $D_6$ with $s=1$ and $\mathfrak{g}=\mathfrak{g}(D_6)$ one has $\dim\mathfrak{g}_{1/2}\le 2$.
\end{lemma}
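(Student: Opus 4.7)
The plan is to apply Theorem \ref{descrg1/2} to $\Omega_3$ and to the $\Omega_3$-Hermitian form $\mathcal{H}=(\mathcal{H}_1,\mathcal{H}_2,\mathcal{H}_3)$ underlying $D_6$. First, after a linear change of $z$-variables (by an element of $G(\Omega_3)$) I may assume $\mathcal{H}_1$ is positive-definite, and a further change of $w$-variables normalizes it to $\mathcal{H}_1=I$; a unitary change of $w$ then diagonalizes $\mathcal{H}_2 = \mathrm{diag}(\lambda_1,\lambda_2)$. The hypothesis $s=1$ forces both $\lambda_1\ne\lambda_2$ (otherwise the commutant of $\mathcal{H}_2$ in $\mathfrak{u}(2)$ is all of $\mathfrak{u}(2)$, making $s\ge 2$) and a non-zero off-diagonal entry $\beta$ in $\mathcal{H}_3 = \begin{pmatrix}\alpha&\beta\\\bar\beta&\gamma\end{pmatrix}$ (otherwise every diagonal skew-Hermitian matrix lies in $\mathcal{L}$, again giving $s\ge 2$).

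Next, writing $\Phi\colon\CC^3\to\CC^2$ via its columns $\varphi_1,\varphi_2,\varphi_3\in\CC^2$ and computing $\Phi_{\mathbf w}(x) = \Im H(\mathbf w,\Phi(x))$ entrywise, the condition $\Phi_{\mathbf w}\in\mathfrak{g}(\Omega_3)$ for every $\mathbf w\in\CC^2$, with $\mathfrak{g}(\Omega_3)$ described by (\ref{alggomega3}), translates --- via the injectivity of the $\RR$-linear map $v\mapsto\Im\langle\cdot,v\rangle$ on $\CC^2$ --- into the $\CC$-linear relations $\varphi_1 = \mathcal{H}_2\varphi_2 = \mathcal{H}_3\varphi_3$, $\varphi_2 = \mathcal{H}_2\varphi_1$, $\varphi_3 = \mathcal{H}_3\varphi_1$, and $\mathcal{H}_2\varphi_3 + \mathcal{H}_3\varphi_2 = 0$. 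Eliminating $\varphi_2,\varphi_3$ confines $\varphi_1$ to the common kernel $V := \ker(\mathcal{H}_2^2 - I)\cap\ker(\mathcal{H}_3^2 - I)\cap\ker(\mathcal{H}_2\mathcal{H}_3 + \mathcal{H}_3\mathcal{H}_2)\subset\CC^2$. Since the $c$-component of any vector field in $\mathfrak{g}_{1/2}$ is uniquely determined by $\Phi$ through (\ref{cond1}) (the map $v\mapsto\mathcal{H}(\cdot,v)$ being injective thanks to $\mathcal{H}_1=I$), this gives the a priori bound $\dim_\RR\mathfrak{g}_{1/2}\le 2\dim_\CC V$.

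A direct case analysis on $\lambda_1,\lambda_2,\alpha,\gamma,\beta$ now shows that $\dim_\CC V\le 1$ in every configuration except the exceptional one $\lambda_1 = -\lambda_2 = 1$, $\alpha = \gamma = 0$, $|\beta| = 1$. Indeed, outside this configuration at least one of $\mathcal{H}_2^2 - I$, $\mathcal{H}_3^2 - I$, $\mathcal{H}_2\mathcal{H}_3 + \mathcal{H}_3\mathcal{H}_2$ is non-zero in a way that imposes coordinate-wise vanishings on $\varphi_1$ and collapses $V$ to at most one complex dimension. In every such case the inequality $\dim\mathfrak{g}_{1/2}\le 2$ follows immediately.

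The hard part is the exceptional configuration --- where, after absorbing the phase of $\beta$ into a diagonal unitary, $\mathcal{H}_2 = \mathrm{diag}(1,-1)$ and $\mathcal{H}_3 = \begin{pmatrix}0&1\\1&0\end{pmatrix}$ --- because here the linear conditions alone allow $\varphi_1\in\CC^2$ to be arbitrary, giving a priori a $4$-real-dimensional family. To handle it I would use (\ref{cond1}) directly: the $j=1$ component uniquely determines the quadratic polynomial $c(w',w')$ (since $\mathcal{H}_1 = I$), and then the $j = 2$ component $\mathcal{H}_2(w,c(w',w')) = 2i\,\mathcal{H}_2(\Phi(\mathcal{H}(w',w)),w')$ must also hold identically in $w,w'$. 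Writing $\varphi_1 = (\xi_1,\xi_2)^T$ and matching coefficients of the monomials $\bar w_1(w'_2)^2$ and $\bar w_2(w'_1)^2$ in that identity forces $\xi_1 = \xi_2 = 0$; the sign asymmetry produced by $\mathcal{H}_2 = \mathrm{diag}(1,-1)$ acting on the already-determined $c(w',w')$ is precisely the mechanism behind this collapse. Thus $\Phi = 0$ in the exceptional case, yielding even $\mathfrak{g}_{1/2} = 0$, well within the bound claimed in the lemma.
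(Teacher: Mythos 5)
Your argument is correct and arrives at the same two milestones as the paper's proof --- the space of admissible maps $\Phi$ in Theorem \ref{descrg1/2} has complex dimension at most $1$ except for a single exceptional form, and for that form condition (\ref{cond1}) forces $\Phi=0$ --- but you organize the linear-algebra stage genuinely differently. The paper normalizes $\mathcal{H}$ through its $w$-decomposition $u|w_1|^2+v|w_2|^2+a\bar w_1w_2+\bar a\bar w_2w_1$, splits into the cases $u\in\Omega_3$ and $u\in\partial\Omega_3\setminus\{0\}$, and in each branch writes out and inspects the nine scalar relations coming from (\ref{alggomega3}), with the exceptional form (\ref{verynewformcalh}) emerging at the end of each branch. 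You instead normalize the $z$-components (note that $\mathcal{H}_1$ is positive-definite automatically, since $e_1$ lies in the interior of the self-dual cone $\Omega_3$, so no change of $z$ is even needed), set $\mathcal{H}_1=I$ and $\mathcal{H}_2=\mathrm{diag}(\lambda_1,\lambda_2)$, and recast the condition $\Phi_{\mathbf w}\in{\mathfrak g}(\Omega_3)$ for all $\mathbf w$ as $\varphi_2=\mathcal{H}_2\varphi_1$, $\varphi_3=\mathcal{H}_3\varphi_1$ with $\varphi_1\in V=\ker(\mathcal{H}_2^2-I)\cap\ker(\mathcal{H}_3^2-I)\cap\ker(\mathcal{H}_2\mathcal{H}_3+\mathcal{H}_3\mathcal{H}_2)$; I have checked that this translation is exact. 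Your ``direct case analysis'' is then not even needed: a nonzero $2\times2$ matrix has kernel of complex dimension at most $1$, so $\dim_{\CC}V\le 1$ unless $\mathcal{H}_2^2=\mathcal{H}_3^2=I$ and $\mathcal{H}_2,\mathcal{H}_3$ anticommute, which together with the constraints $\lambda_1\ne\lambda_2$ and $\beta\ne 0$ (both correctly forced by $s=1$) pins down the exceptional configuration in one stroke. This buys a visibly shorter and more structural identification of the exceptional form and eliminates the paper's two-branch case split. The endgame coincides with the paper's: comparing the coefficients of $\bar w_1(w_2')^2$ and $\bar w_2(w_1')^2$ in the first two components of (\ref{cond1}) yields $c^1_{22}=2i\bar\xi_1=-2i\bar\xi_1$ and $c^2_{11}=2i\bar\xi_2=-2i\bar\xi_2$, hence $\xi_1=\xi_2=0$; this is exactly the paper's computation with $(\xi_1,\xi_2)$ playing the role of $(\varphi_3^2,\varphi_3^1)$.
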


\begin{proof} Let us write the $\Omega_3$-Hermitian form ${\mathcal H}$ as
$$
{\mathcal H}=u|w_1|^2+v|w_2|^2+a\bar w_1w_2+\bar a\, \bar w_2 w_1,
$$
where $u,v\in\RR^3$ and $a\in\CC^3$. It is then clear that $u,v\in\bar\Omega_3\setminus\{0\}$. We will consider two cases.

{\bf Case (i).} Suppose first that $u\in\Omega_3$. Then, as the cone $\Omega_3$ is homogeneous, we may assume that $u=(1,0,0)$. Further, replacing $w_1$ by $w_1+a_1w_2$, we may suppose that $a_1=0$. In addition, rotating the variables $z_2, z_3$ by a transformation from $\O_2$, we can always reduce to the case when ${\mathcal H}_3$ has no $|w_2|^2$-term, i.e., when $v_3=0$. As $s=1$, we must have $a_3\ne 0$, hence, by scaling $w_2$, one can also assume that $a_3=1$. 

To utilize Theorem \ref{descrg1/2}, let $\Phi:\CC^3\to\CC^2$ be a $\CC$-linear map
\begin{equation}
\Phi(z_1,z_2,z_3)=\left(\varphi_1^1z_1+\varphi_2^1z_2+\varphi_3^1z_3, \varphi_1^2z_1+\varphi_2^2z_2+\varphi_3^2z_3\right),\label{mapPhineww}
\end{equation}
where $\varphi_i^j\in\CC$. Fixing ${\mathbf w}\in\CC^2$, for $x\in\RR^3$ we compute
$$
\begin{array}{l}
\displaystyle{\mathcal H}({\mathbf w},\Phi(x))=\left(\bar{\mathbf w}_1(\varphi_1^1x_1+\varphi_2^1x_2+\varphi_3^1x_3)+
v_1\bar{\mathbf w}_2(\varphi_1^2x_1+\varphi_2^2x_2+\varphi_3^2x_3),\right.\\
\vspace{-0.1cm}\\
\hspace{2.3cm}\left.v_2\bar{\mathbf w}_2(\varphi_1^2x_1+\varphi_2^2x_2+\varphi_3^2x_3)+a_2\bar{\mathbf w}_1(\varphi_1^2x_1+\varphi_2^2x_2+\varphi_3^2x_3)+\right.\\
\vspace{-0.1cm}\\
\hspace{2.3cm}\left.\bar a_2\bar{\mathbf w}_2(\varphi_1^1x_1+\varphi_2^1x_2+\varphi_3^1x_3),\bar{\mathbf w}_1(\varphi_1^2x_1+\varphi_2^2x_2+\varphi_3^2x_3)+\right.\\
\vspace{-0.1cm}\\
\hspace{2.3cm}\left.\bar{\mathbf w}_2(\varphi_1^1x_1+\varphi_2^1x_2+\varphi_3^1x_3)\right)=\left((\varphi_1^1\bar{\mathbf w}_1+v_1\varphi_1^2\bar{\mathbf w}_2)x_1+\right.\\
\vspace{-0.1cm}\\
\hspace{2.3cm}\left.(\varphi_2^1\bar{\mathbf w}_1+v_1\varphi_2^2\bar{\mathbf w}_2)x_2+(\varphi_3^1\bar{\mathbf w}_1+v_1\varphi_3^2\bar{\mathbf w}_2)x_3, (a_2\varphi_1^2\bar{\mathbf w}_1+\right.\\
\vspace{-0.1cm}\\
\hspace{2.3cm}\left.(\bar a_2\varphi_1^1+v_2\varphi_1^2)\bar{\mathbf w}_2)x_1+(a_2\varphi_2^2\bar{\mathbf w}_1+(\bar a_2\varphi_2^1+v_2\varphi_2^2)\bar{\mathbf w}_2)x_2+\right.\\
\vspace{-0.1cm}\\
\hspace{2.3cm}\left.(a_2\varphi_3^2\bar{\mathbf w}_1+(\bar a_2\varphi_3^1+v_2\varphi_3^2)\bar{\mathbf w}_2)x_3, (\varphi_1^2\bar{\mathbf w}_1+\varphi_1^1\bar{\mathbf w}_2)x_1+\right.\\
\vspace{-0.1cm}\\
\hspace{2.3cm}\left.(\varphi_2^2\bar{\mathbf w}_1+\varphi_2^1\bar{\mathbf w}_2)x_2+(\varphi_3^2\bar{\mathbf w}_1+\varphi_3^1\bar{\mathbf w}_2)x_3\right).
\end{array}
$$
Then from formula (\ref{Phiw0}) we see
$$
\begin{array}{l}
\displaystyle\Phi_{{\mathbf w}}(x)=\left((\Im(\varphi_1^1\bar{\mathbf w}_1)+v_1\Im(\varphi_1^2\bar{\mathbf w}_2))x_1+(\Im(\varphi_2^1\bar{\mathbf w}_1)+v_1\Im(\varphi_2^2\bar{\mathbf w}_2))x_2+\right.\\
\vspace{-0.1cm}\\
\hspace{0.5cm}\left.(\Im(\varphi_3^1\bar{\mathbf w}_1)+v_1\Im(\varphi_3^2\bar{\mathbf w}_2))x_3, (\Im(a_2\varphi_1^2\bar{\mathbf w}_1)+\Im((\bar a_2\varphi_1^1+v_2\varphi_1^2)\bar{\mathbf w}_2))x_1+\right.\\
\vspace{-0.1cm}\\
\hspace{0.5cm}\left.(\Im(a_2\varphi_2^2\bar{\mathbf w}_1)+\Im((\bar a_2\varphi_2^1+v_2\varphi_2^2)\bar{\mathbf w}_2))x_2+(\Im(a_2\varphi_3^2\bar{\mathbf w}_1)+\Im((\bar a_2\varphi_3^1+\right.\\
\vspace{-0.1cm}\\
\hspace{0.5cm}\left.v_2\varphi_3^2)\bar{\mathbf w}_2))x_3, (\Im(\varphi_1^2\bar{\mathbf w}_1)+\Im(\varphi_1^1\bar{\mathbf w}_2))x_1+(\Im(\varphi_2^2\bar{\mathbf w}_1)+\Im(\varphi_2^1\bar{\mathbf w}_2))x_2+\right.\\
\vspace{-0.1cm}\\
\hspace{7.2cm}\left.(\Im(\varphi_3^2\bar{\mathbf w}_1)+\Im(\varphi_3^1\bar{\mathbf w}_2))x_3\right).
\end{array}
$$

Using (\ref{alggomega3}), we then see that the condition that $\Phi_{{\mathbf w}}$ lies in ${\mathfrak g}(\Omega_3)$ for every ${\mathbf w}\in\CC^2$ leads to the relations
\begin{equation}
\begin{array}{l}
\varphi_1^1=a_2\varphi_2^2=\varphi_3^2,\,\,v_1\varphi_1^2=\bar a_2\varphi_2^1+v_2\varphi_2^2=\varphi_3^1,\\
\vspace{-0.3cm}\\
\varphi_2^1=a_2\varphi_1^2,\,\,v_1\varphi_2^2=\bar a_2\varphi_1^1+v_2\varphi_1^2,\,\,\varphi_3^1=\varphi_1^2,\\
\vspace{-0.3cm}\\
\varphi_1^1=v_1\varphi_3^2,\,\,\varphi_2^2=-a_2\varphi_3^2,\,\,\varphi_2^1=-\bar a_2\varphi_3^1-v_2\varphi_3^2.
\end{array}\label{relmapphibig}
\end{equation}
If $a_2=0$, it immediately follows that $\Phi=0$, thus by formula (\ref{cond1}) we have $\mathfrak{g}_{1/2}=0$. Suppose now that $a_2\ne 0$. It is then straightforward to see that, unless $v_1=1$, $v_2=0$, $a_2=\pm i$, the space of all maps $\Phi$ satisfying relations (\ref{relmapphibig}) has complex dimension at most 1, which by (\ref{cond1}) implies $\dim\mathfrak{g}_{1/2}\le 2$. 

We thus assume that
$$
{\mathcal H}=(|w_1|^2+|w_2|^2,\pm i(\bar w_1w_2-\bar w_2 w_1), \bar w_1w_2+\bar w_2 w_1).
$$
Changing the $w$-variables as
$$
w_1\mapsto -\frac{i}{\sqrt{2}}(w_1+iw_2),\,\,w_2\mapsto \frac{1}{\sqrt{2}}(w_1-iw_2),
$$
we can suppose that
$$
{\mathcal H}=(|w_1|^2+|w_2|^2,\mp(|w_1|^2-|w_2|^2), \bar w_1w_2+\bar w_2 w_1).
$$
Further, swapping $w_1$ and $w_2$ if necessary, we reduce our considerations to the case where
\begin{equation}
{\mathcal H}=(|w_1|^2+|w_2|^2,|w_1|^2-|w_2|^2, \bar w_1w_2+\bar w_2 w_1).\label{verynewformcalh}
\end{equation}

We will now show that for the above $\Omega_3$-Hermitian form ${\mathcal H}$ one has $\mathfrak{g}_{1/2}=0$. Consider a map $\Phi:\CC^3\to\CC^2$ as in (\ref{mapPhineww}), fix ${\mathbf w}\in\CC^2$, and for $x\in\RR^3$ compute
$$
\begin{array}{l}
\displaystyle{\mathcal H}({\mathbf w},\Phi(x))=\left(\bar{\mathbf w}_1(\varphi_1^1x_1+\varphi_2^1x_2+\varphi_3^1x_3)+
\bar{\mathbf w}_2(\varphi_1^2x_1+\varphi_2^2x_2+\varphi_3^2x_3),\right.\\
\vspace{-0.1cm}\\
\hspace{2.3cm}\left.\bar{\mathbf w}_1(\varphi_1^1x_1+\varphi_2^1x_2+\varphi_3^1x_3)-\bar{\mathbf w}_2(\varphi_1^2x_1+\varphi_2^2x_2+\varphi_3^2x_3),\right.\\
\vspace{-0.1cm}\\
\hspace{2.3cm}\left.\bar{\mathbf w}_1(\varphi_1^2x_1+\varphi_2^2x_2+\varphi_3^2x_3)+\bar{\mathbf w}_2(\varphi_1^1x_1+\varphi_2^1x_2+\varphi_3^1x_3)\right)=\\
\vspace{-0.1cm}\\
\hspace{2.3cm}\left((\varphi_1^1\bar{\mathbf w}_1+\varphi_1^2\bar{\mathbf w}_2)x_1+(\varphi_2^1\bar{\mathbf w}_1+\varphi_2^2\bar{\mathbf w}_2)x_2+(\varphi_3^1\bar{\mathbf w}_1+\varphi_3^2\bar{\mathbf w}_2)x_3,\right.\\
\vspace{-0.1cm}\\
\hspace{2.3cm}\left.(\varphi_1^1\bar{\mathbf w}_1-\varphi_1^2\bar{\mathbf w}_2)x_1+(\varphi_2^1\bar{\mathbf w}_1-\varphi_2^2\bar{\mathbf w}_2)x_2+(\varphi_3^1\bar{\mathbf w}_1-\varphi_3^2\bar{\mathbf w}_2)x_3,\right.\\
\vspace{-0.1cm}\\
\hspace{2.3cm}\left.(\varphi_1^2\bar{\mathbf w}_1+\varphi_1^1\bar{\mathbf w}_2)x_1+(\varphi_2^2\bar{\mathbf w}_1+\varphi_2^1\bar{\mathbf w}_2)x_2+(\varphi_3^2\bar{\mathbf w}_1+\varphi_3^1\bar{\mathbf w}_2)x_3\right).
\end{array}
$$
Then from formula (\ref{Phiw0}) we see
$$
\begin{array}{l}
\displaystyle\Phi_{{\mathbf w}}(x)=\left((\Im(\varphi_1^1\bar{\mathbf w}_1)+\Im(\varphi_1^2\bar{\mathbf w}_2))x_1+(\Im(\varphi_2^1\bar{\mathbf w}_1)+\Im(\varphi_2^2\bar{\mathbf w}_2))x_2+\right.\\
\vspace{-0.1cm}\\
\hspace{1.4cm}\left.(\Im(\varphi_3^1\bar{\mathbf w}_1)+\Im(\varphi_3^2\bar{\mathbf w}_2))x_3,(\Im(\varphi_1^1\bar{\mathbf w}_1)-\Im(\varphi_1^2\bar{\mathbf w}_2))x_1+\right.\\
\vspace{-0.1cm}\\
\hspace{1.4cm}\left.(\Im(\varphi_2^1\bar{\mathbf w}_1)-\Im(\varphi_2^2\bar{\mathbf w}_2))x_2+(\Im(\varphi_3^1\bar{\mathbf w}_1)-\Im(\varphi_3^2\bar{\mathbf w}_2))x_3,\right.\\
\vspace{-0.1cm}\\
\hspace{1.4cm}\left.(\Im(\varphi_1^2\bar{\mathbf w}_1)+\Im(\varphi_1^1\bar{\mathbf w}_2))x_1+(\Im(\varphi_2^2\bar{\mathbf w}_1)+\Im(\varphi_2^1\bar{\mathbf w}_2))x_2+\right.\\
\vspace{-0.1cm}\\
\hspace{6.5cm}\left.(\Im(\varphi_3^2\bar{\mathbf w}_1)+\Im(\varphi_3^1\bar{\mathbf w}_2))x_3\right).
\end{array}
$$
From (\ref{alggomega3}) we then see that the condition that $\Phi_{{\mathbf w}}$ lies in ${\mathfrak g}(\Omega_3)$ for every ${\mathbf w}\in\CC^2$ leads to the relations
\begin{equation}
\begin{array}{l}
\varphi_1^1=\varphi_2^1=\varphi_3^2,\,\,\varphi_1^2=-\varphi_2^2=\varphi_3^1,\\
\end{array}\label{relmapphibig11}
\end{equation}

Further, let $c$ be a symmetric $\CC$-bilinear form on $\CC^2$ with values in $\CC^2$:
$$
c(w,w)=\left(c^1_{11}w_1^2+2c^1_{12}w_1w_2+c^1_{22}w_2^2,c^2_{11}w_1^2+2c^2_{12}w_1w_2+c^2_{22}w_2^2\right),
$$
where $c^{\ell}_{ij}\in\CC$. Then for $w,w'\in\CC^2$ using (\ref{verynewformcalh}) we calculate
\begin{equation}
\makebox[250pt]{$\begin{array}{l}
{\mathcal H}(w,c(w',w'))=\left(\bar w_1(c^1_{11}(w_1')^{2}+2c^1_{12}w_1'w_2'+c^1_{22}(w_2')^{2})+\bar w_2(c^2_{11}(w_1')^{2}+\right.\\
\vspace{-0.1cm}\\
\hspace{0.8cm}\left.2c^2_{12}w_1'w_2'+c^2_{22}(w_2')^{2}), \bar w_1(c^1_{11}(w_1')^{2}+2c^1_{12}w_1'w_2'+c^1_{22}(w_2')^{2})-\right.\\
\vspace{-0.1cm}\\
\hspace{0.8cm}\left.\bar w_2(c^2_{11}(w_1')^{2}+2c^2_{12}w_1'w_2'+c^2_{22}(w_2')^{2}), \bar w_1(c^2_{11}(w_1')^{2}+2c^2_{12}w_1'w_2'+\right.\\
\vspace{-0.1cm}\\
\hspace{0.8cm}\left.c^2_{22}(w_2')^{2})+\bar w_2(c^1_{11}(w_1')^{2}+2c^1_{12}w_1'w_2'+c^1_{22}(w_2')^{2})\right).
\end{array}$}\label{exprssdiff1}
\end{equation}
On the other hand, we have
$$
\begin{array}{l}
\Phi({\mathcal H}(w',w))=\left(\varphi_1^1(\bar w_1'w_1+\bar w_2'w_2)+\varphi_2^1(\bar w_1'w_1-\bar w_2'w_2)+\varphi_3^1(\bar w_1' w_2+\bar w_2'w_1),\right.\\
\vspace{-0.1cm}\\
\hspace{2.4cm}\left.\varphi_1^2(\bar w_1'w_1+\bar w_2'w_2)+\varphi_2^2(\bar w_1'w_1-\bar w_2'w_2)+\varphi_3^2(\bar w_1' w_2+\bar w_2'w_1)\right)=\\
\vspace{-0.1cm}\\
\hspace{2.4cm}\left((\varphi_1^1+\varphi_2^1)\bar w_1'w_1+(\varphi_1^1-\varphi_2^1)\bar w_2'w_2+\varphi_3^1(\bar w_1' w_2+\bar w_2'w_1),\right.\\
\vspace{-0.1cm}\\
\hspace{2.4cm}\left.(\varphi_1^2+\varphi_2^2)\bar w_1'w_1+(\varphi_1^2-\varphi_2^2)\bar w_2'w_2+\varphi_3^2(\bar w_1' w_2+\bar w_2'w_1)\right). 
\end{array}
$$
Therefore
\begin{equation}
\makebox[250pt]{$\begin{array}{l}
2i{\mathcal H}(\Phi({\mathcal H}(w',w)),w')=\\
\vspace{-0.1cm}\\
\hspace{2cm}
2i\left(w_1'\left((\bar\varphi_1^1+\bar\varphi_2^1)w_1'\bar w_1+(\bar\varphi_1^1-\bar\varphi_2^1) w_2'\bar w_2+\bar\varphi_3^1(w_1' \bar w_2+w_2'\bar w_1)\right)+\right.\\
\vspace{-0.1cm}\\
\hspace{2cm}\left.w_2'\left((\bar\varphi_1^2+\bar\varphi_2^2)w_1'\bar w_1+(\bar\varphi_1^2-\bar\varphi_2^2) w_2'\bar w_2+\bar\varphi_3^2(w_1' \bar w_2+w_2'\bar w_1)\right),\right.\\
\vspace{-0.1cm}\\
\hspace{2cm}\left.w_1'\left((\bar\varphi_1^1+\bar\varphi_2^1)w_1'\bar w_1+(\bar\varphi_1^1-\bar\varphi_2^1) w_2'\bar w_2+\bar\varphi_3^1(w_1' \bar w_2+w_2'\bar w_1)\right)-\right.\\
\vspace{-0.1cm}\\
\hspace{2cm}\left.w_2'\left((\bar\varphi_1^2+\bar\varphi_2^2)w_1'\bar w_1+(\bar\varphi_1^2-\bar\varphi_2^2) w_2'\bar w_2+\bar\varphi_3^2(w_1' \bar w_2+w_2'\bar w_1)\right),\right.\\
\vspace{-0.1cm}\\
\hspace{2cm}\left. w_1'\left((\bar\varphi_1^2+\bar\varphi_2^2)w_1'\bar w_1+(\bar\varphi_1^2-\bar\varphi_2^2) w_2'\bar w_2+\bar\varphi_3^2(w_1' \bar w_2+w_2'\bar w_1)\right)+\right.\\
\vspace{-0.1cm}\\
\hspace{2cm}\left.w_2'\left((\bar\varphi_1^1+\bar\varphi_2^1)w_1'\bar w_1+(\bar\varphi_1^1-\bar\varphi_2^1) w_2'\bar w_2+\bar\varphi_3^1(w_1' \bar w_2+w_2'\bar w_1)\right)\right).
\end{array}$}\label{expressdiff2}
\end{equation}
Let us now compare expressions (\ref{exprssdiff1}) and (\ref{expressdiff2}) as required by condition (\ref{cond1}). Specifically, looking at the coefficients at $(w_2')^2\bar w_1$ and $(w_1')^2\bar w_2$ in the first and second components of these expressions, we obtain the identities:
$$
c_{22}^1=2i\bar\varphi_3^2,\qquad c_{22}^1=-2i\bar\varphi_3^2,\qquad c_{11}^2=2i\bar\varphi_3^1,\qquad -c_{11}^2=2i\bar\varphi_3^1,
$$
which imply $\varphi_3^1=0$, $\varphi_3^2=0$. Taken together with (\ref{relmapphibig11}), these conditions yield $\Phi=0$, hence ${\mathfrak g}_{1/2}=0$ as required.

{\bf Case (ii).} Suppose now that $u\in\partial\Omega_3\setminus\{0\}$. In this situation, as the group $G(\Omega_3)^{\circ}=\RR_{+}\times\SO_{1,2}^{\circ}$ acts transitively on $\partial\Omega_3\setminus\{0\}$, we may assume that\linebreak $u=(1,1,0)$. Further, replacing $w_1$ by $w_1+a_1w_2$, we may suppose that $a_1=0$. 

Let $\Phi:\CC^3\to\CC^2$ be a $\CC$-linear map as in (\ref{mapPhineww}). Fixing ${\mathbf w}\in\CC^2$, for $x\in\RR^3$ we compute
$$
\begin{array}{l}
\displaystyle{\mathcal H}({\mathbf w},\Phi(x))=\left(\bar{\mathbf w}_1(\varphi_1^1x_1+\varphi_2^1x_2+\varphi_3^1x_3)+
v_1\bar{\mathbf w}_2(\varphi_1^2x_1+\varphi_2^2x_2+\varphi_3^2x_3),\right.\\
\vspace{-0.1cm}\\
\left.\hspace{1cm}\bar{\mathbf w}_1(\varphi_1^1x_1+\varphi_2^1x_2+\varphi_3^1x_3)+v_2\bar{\mathbf w}_2(\varphi_1^2x_1+\varphi_2^2x_2+\varphi_3^2x_3)+\right.\\
\vspace{-0.1cm}\\
\left.\hspace{1cm}a_2\bar{\mathbf w}_1(\varphi_1^2x_1+\varphi_2^2x_2+\varphi_3^2x_3)+\bar a_2\bar{\mathbf w}_2(\varphi_1^1x_1+\varphi_2^1x_2+\varphi_3^1x_3),\right.\\
\vspace{-0.1cm}\\
\left.\hspace{1cm}
v_3\bar{\mathbf w}_2(\varphi_1^2x_1+\varphi_2^2x_2+\varphi_3^2x_3)+a_3\bar{\mathbf w}_1(\varphi_1^2x_1+\varphi_2^2x_2+\varphi_3^2x_3)+\right.\\
\vspace{-0.1cm}\\
\left.\hspace{1cm}\bar a_3\bar{\mathbf w}_2(\varphi_1^1x_1+\varphi_2^1x_2+\varphi_3^1x_3)\right)=\left((\varphi_1^1\bar{\mathbf w}_1+v_1\varphi_1^2\bar{\mathbf w}_2)x_1+\right.\\
\vspace{-0.1cm}\\
\hspace{1cm}\left.(\varphi_2^1\bar{\mathbf w}_1+v_1\varphi_2^2\bar{\mathbf w}_2)x_2+(\varphi_3^1\bar{\mathbf w}_1+v_1\varphi_3^2\bar{\mathbf w}_2)x_3, ( (\varphi_1^1+a_2\varphi_1^2)\bar{\mathbf w}_1+\right.\\
\vspace{-0.1cm}\\
\hspace{1cm}\left.(\bar a_2\varphi_1^1+v_2\varphi_1^2)\bar{\mathbf w}_2)x_1+((\varphi_2^1+a_2\varphi_2^2)\bar{\mathbf w}_1+(\bar a_2\varphi_2^1+v_2\varphi_2^2)\bar{\mathbf w}_2)x_2+\right.\\
\vspace{-0.1cm}\\
\hspace{1cm}\left.((\varphi_3^1+a_2\varphi_3^2)\bar{\mathbf w}_1+(\bar a_2\varphi_3^1+v_2\varphi_3^2)\bar{\mathbf w}_2)x_3, (a_3\varphi_1^2\bar{\mathbf w}_1+(\bar a_3\varphi_1^1+v_3\varphi_1^2)\bar{\mathbf w}_2)x_1+\right.\\
\vspace{-0.1cm}\\
\hspace{1cm}\left.(a_3\varphi_2^2\bar{\mathbf w}_1+(\bar a_3\varphi_2^1+v_3\varphi_2^2)\bar{\mathbf w}_2)x_2+(a_3\varphi_3^2\bar{\mathbf w}_1+(\bar a_3\varphi_3^1+v_3\varphi_3^2)\bar{\mathbf w}_2)x_3\right).
\end{array}
$$
Then from formula (\ref{Phiw0}) we see
$$
\hspace{-0.3cm}\begin{array}{l}
\displaystyle\Phi_{{\mathbf w}}(x)=\left((\Im(\varphi_1^1\bar{\mathbf w}_1)+v_1\Im(\varphi_1^2\bar{\mathbf w}_2))x_1+(\Im(\varphi_2^1\bar{\mathbf w}_1)+v_1\Im(\varphi_2^2\bar{\mathbf w}_2))x_2+\right.\\
\vspace{-0.1cm}\\
\hspace{0.1cm}\left.
(\Im(\varphi_3^1\bar{\mathbf w}_1)+v_1\Im(\varphi_3^2\bar{\mathbf w}_2))x_3, ( \Im((\varphi_1^1+a_2\varphi_1^2)\bar{\mathbf w}_1)+\Im((\bar a_2\varphi_1^1+v_2\varphi_1^2)\bar{\mathbf w}_2))x_1+\right.\\
\vspace{-0.1cm}\\
\hspace{0.1cm}\left.
(\Im((\varphi_2^1+a_2\varphi_2^2)\bar{\mathbf w}_1)+\Im((\bar a_2\varphi_2^1+v_2\varphi_2^2)\bar{\mathbf w}_2))x_2+(\Im((\varphi_3^1+a_2\varphi_3^2)\bar{\mathbf w}_1)+\right.\\
\vspace{-0.1cm}\\
\hspace{0.1cm}\left.\Im((\bar a_2\varphi_3^1+v_2\varphi_3^2)\bar{\mathbf w}_2))x_3, (\Im(a_3\varphi_1^2\bar{\mathbf w}_1)+\Im((\bar a_3\varphi_1^1+v_3\varphi_1^2)\bar{\mathbf w}_2))x_1+\right.\\
\vspace{-0.1cm}\\
\hspace{0.1cm}\left.(\Im(a_3\varphi_2^2\bar{\mathbf w}_1)+\Im((\bar a_3\varphi_2^1+v_3\varphi_2^2)\bar{\mathbf w}_2))x_2+(\Im(a_3\varphi_3^2\bar{\mathbf w}_1)+\Im((\bar a_3\varphi_3^1+v_3\varphi_3^2)\bar{\mathbf w}_2))x_3\right).
\end{array}
$$

Using (\ref{alggomega3}), we then see that the condition that $\Phi_{{\mathbf w}}$ lies in ${\mathfrak g}(\Omega_3)$ for every ${\mathbf w}\in\CC^2$ leads to the relations
\begin{equation}
\begin{array}{l}
\varphi_1^1=\varphi_2^1+a_2\varphi_2^2=a_3\varphi_3^2,\,\,v_1\varphi_1^2=\bar a_2\varphi_2^1+v_2\varphi_2^2=\bar a_3\varphi_3^1+v_3\varphi_3^2,\\
\vspace{-0.1cm}\\
\varphi_2^1=\varphi_1^1+a_2\varphi_1^2,\,\,v_1\varphi_2^2=\bar a_2\varphi_1^1+v_2\varphi_1^2,\,\,\varphi_3^1=a_3\varphi_1^2,\\
\vspace{-0.1cm}\\
\bar a_3\varphi_1^1+v_3\varphi_1^2=v_1\varphi_3^2,\,\,a_3\varphi_2^2=-\varphi_3^1-a_2\varphi_3^2,\,\,\bar a_3\varphi_2^1+v_3\varphi_2^2=-\bar a_2\varphi_3^1-v_2\varphi_3^2.
\end{array}\label{relmapphibig1}
\end{equation}
It easily follows from (\ref{relmapphibig1}) that if $a_3=0$, then $\Phi=0$, so by formula (\ref{cond1}) we have ${\mathfrak g}_{1/2}=0$. If $a_3\ne 0$, then, by scaling $w_2$, we can assume that $a_3=1$. In this situation it is straightforward to see that, unless $v_1=1$, $v_2=-1$, $v_3=0$, $a_2=0$, the space of all maps $\Phi$ satisfying relations (\ref{relmapphibig1}) has complex dimension at most 1; by formula (\ref{cond1}) this implies $\dim\mathfrak{g}_{1/2}\le 2$. Notice now that for the above values of $v_1$, $v_2$, $v_3$, $a_2$ the form ${\mathcal H}$ coincides with the right-hand side of (\ref{verynewformcalh}), for which we have already shown that ${\mathfrak g}_{1/2}=0$. 

The proof of the lemma is now complete.\end{proof}

Now, Lemma \ref{oneofthemissmall} together with (\ref{estim 8}) and the second inequality in (\ref{estimm}) yields
$d(D_6)\le  17<19=n^2-6$. Thus, we have shown that Case (3) only contributes the product $B^1\times B^2\times B^2$ to our classification.
\vspace{0.1cm}

{\bf Case (4).} Here inequality (\ref{estim2}) implies $s+\dim{\mathfrak g}(\Omega)\ge 12$, so we need to look at the possibility when $s=9$ and $\Omega$ is linearly equivalent to $\Omega_2$. This possibility yields the product $B^1\times B^1\times B^4$ with $d(B^1\times B^1\times B^4)=30=n^2-6$. Also, it follows from the analysis given in Section \ref{proof1} that if $\Omega$ is linearly equivalent to $\Omega_3$, Case (4) makes no contributions to the classification. Indeed, either the domain $D_7$ defined in (\ref{domaind7new}) is not homogeneous or we have $d(D_7)=22<30=n^2-6$.
\vspace{0.1cm}

{\bf Case (5).} Clearly, this case makes no contributions to our classification.
\vspace{0.1cm}

{\bf Case (6).} Here inequality (\ref{estim2}) yields $\dim {\mathfrak g}(\Omega)\ge 6$, hence it follows, for example, by Lemma \ref{ourlemma}, that the cone $\Omega$ is linearly equivalent to the circular cone $C_4=\Omega_6$. Then, arguing as in Section \ref{proof1}, we see that Case (6) does not contribute any domains to the classification. Indeed, either the domain $D_8$ defined in (\ref{domaind12}) is not homogeneous or we have $d(D_8)\le 16<19=n^2-6$.
\vspace{0.1cm}

{\bf Case (7).} In this case, inequality (\ref{estim2}) implies $\dim {\mathfrak g}(\Omega)\ge 9$, so, as in Section \ref{proof1}, Lemma \ref{ourlemma} yields that $\Omega$ is linearly equivalent to the circular cone $C_5$. Since $d(T_5)=21>19=n^2-6$, this again leads to the conclusion that this case makes no contributions to the classification of homogeneous hyperbolic $n$-dimensional manifolds with automorphism group dimension $n^2-6$.

The proof of Theorem \ref{main2} is now complete.

\end{document}